\crefname{equation}{}{}
\crefname{figure}{Figure}{Figures}
\crefname{assumption}{Assumption}{Assumptions}
\crefname{condition}{Condition}{Conditions}
\theoremstyle{plain}
\newtheorem{theorem}{Theorem}[section]
\newtheorem{proposition}[theorem]{Proposition}
\newtheorem{lemma}[theorem]{Lemma}
\theoremstyle{definition}
\newtheorem{assumption}[theorem]{Assumption}
\theoremstyle{remark}
\newtheorem{remark}[theorem]{Remark}
\title{First-ish Order Methods: \\ Hessian-aware Scalings of Gradient Descent}
\author{%
  Oscar Smee
    \\
  School of Mathematics and Physics\\
  University of Queensland\\
  Brisbane, Australia\\
  \texttt{o.smee@uq.edu.au} \\
   \And
    Fred Roosta\\ 
    School of Mathematics and Physics\\
    ARC Training Centre for Information Resilience \\
    University of Queensland \\ 
    Brisbane, Australia \\
    \texttt{fred.roosta@uq.edu.au} \\
  \AND
  Stephen J. Wright \\
  Computer Sciences Department\\
  University of Wisconsin-Madison\\
  Madison, Wisconsin\\
  \texttt{swright@cs.wisc.edu}\\
}
\setlist[enumerate]{leftmargin=1.3em, itemindent=0em, itemsep=0em, topsep=0em, label = {\bfseries \arabic*.}} 
\setlist[itemize]{leftmargin=1.3em,itemindent=0em, itemsep=0em, topsep=0em}
\newcommand{\SPC}{\texttt{SPC}}
\newcommand{\LPC}{\texttt{LPC}}
\newcommand{\NC}{\texttt{NC}}
\renewcommand\th{\textsuperscript{th}\xspace}
\newcommand\tsup[2][2]{%
	\def\useanchorwidth{T}%
	\ifnum#1>1%
	\stackon[-.5pt]{\tsup[\numexpr#1-1\relax]{#2}}{\scriptscriptstyle\sim}%
	\else%
	\stackon[.5pt]{#2}{\scriptscriptstyle\sim}%
	\fi%
}
\newcommand{\longdash}[1][2em]{%
	\makebox[#1]{$\m@th\smash-\mkern-7mu\cleaders\hbox{$\mkern-2mu\smash-\mkern-2mu$}\hfill\mkern-7mu\smash-$}}
\newcommand{\omitskip}{\kern-\arraycolsep}
\newcommand{\real}{\mathbb{R}}
\newcommand*\tageq{\refstepcounter{equation}\tag{\theequation}}
\newcommand {\Lg} { {L}_{\bgg} }
\newcommand {\Lt} {\tilde{L}}
\newcommand {\Lmax} {L^{\text{max}}}
\newcommand {\eg} {\varepsilon_g}
\newcommand{\sB}{\mathcal{B}}
\newcommand{\sD}{\mathcal{D}}
\newcommand{\sF}{\mathcal{F}}
\newcommand{\sG}{\mathcal{G}}
\newcommand{\sK}{\mathcal{K}}
\newcommand{\sI}{\mathcal{I}}
\renewcommand {\AA}  { {\mathbf{A}} }
\newcommand {\BB}  { {\mathbf{B}} }
\newcommand {\HH}  { {\mathbf{H}} }
\newcommand{\eye}{\mathbf{I}}
\renewcommand {\aa}  { {\bf a} }
\newcommand {\bgg}  { {\bf g} }
\newcommand {\yy}  { {\bf y} }
\newcommand {\hh}  { {\bf h} }
\newcommand {\rr}  { {\bf r} }
\newcommand {\pp}  { {\bf p} }
\newcommand {\vv}  { {\bf v} }
\newcommand {\xx}  { {\bf x} }
\newcommand {\zero}  { {\bf 0} }
\newcommand {\one}  { {\bf 1} }
\newcommand {\alphak}  { {{\alpha}_{k}} }
\newcommand {\HHk}  { {\HH_{k}} }
\newcommand {\bggk}  { {{\bgg}_{k}} }
\newcommand {\xxo}  { {{\xx}_{0}} }
\newcommand {\xxk}  { {{\xx}_{k}} }
\newcommand {\ppk}  { {{\pp}_{k}} }
\newcommand {\range}  { {\textnormal{Range}} }
\newcommand {\Null}  { {\textnormal{Null}} }
\newcommand{\cSPC}{c^{\text{SPC}}}
\newcommand{\cLPC}{c^\text{LPC}}
\newcommand{\cNC}{c^\text{NC}}
\newcommand{\pMR}{\pp^{\text{MR}}}
\newcommand{\pCG}{\pp^{\text{CG}}}
\newcommand{\pGM}{\pp^{\text{GM}}}
\newcommand{\sCG}{s^{\text{CG}}}
\newcommand{\sMR}{s^{\text{MR}}}
\newcommand{\sGM}{s^{\text{GM}}}
\newcommand{\sLPC}{s^\text{LPC}}
\newcommand{\sSPC}{s^{\text{SPC}}}
\newcommand{\sNC}{s^\text{NC}}
\newcommand{\st}{\tilde{s}}
\newcommand{\sNCt}{\tilde{s}^\text{NC}}
\newcommand{\ct}{\tilde{c}}
\newcommand{\cLPCt}{\ct^\text{LPC}}
\newcommand{\cNCt}{\ct^\text{NC}}
\newcommand{\cSPCt}{\ct^{\text{SPC}}}
\newcommand{\HHt}{\tilde{\HH}}
\newcommand{\gHg}{\dotprod{\bgg, \HH \bgg}}
\newcommand{\ppt}{\tilde{\pp}}
\newcommand*{\transpose}{%
	{\mathpalette\@transpose{}}%
}
\newcommand*{\@transpose}[2]{%
	\raisebox{\depth}{$\m@th#1\intercal$}%
}
\renewcommand{\vec}[1]{\ensuremath{\mathbf{#1}}}
\newcommand{\grad}{\ensuremath {\vec \nabla}}
\newcommand{\defeq}{\triangleq}
\newcommand{\tmax}{\max}
\newcommand{\tmin}{\min}
\newcommand*\dotprod[1]{\left\langle #1\right\rangle}
\newcommand*\vnorm[1]{\left\| #1\right\|}
\newcommand*\abs[1]{\left| #1\right|}
\renewcommand {\Pr} { {\mathbb P} }
\newcommand*\bigO[1]{\mathcal O\left( #1\right)}
\begin{document}
\maketitle

\begin{abstract}
Gradient descent is the primary workhorse for optimizing large-scale problems in machine learning. However, its performance is highly sensitive to the choice of the learning rate.
A key limitation of gradient descent is its lack of natural scaling, which often necessitates expensive line searches or heuristic tuning to determine an appropriate step size.
In this paper, we address this limitation by incorporating Hessian information to scale the gradient direction. By accounting for the curvature of the function along the gradient, our adaptive, Hessian-aware scaling method ensures a local unit step size guarantee, even in nonconvex settings. 
Near a local minimum that satisfies the second-order sufficient conditions, our approach achieves linear convergence with a unit step size.
We show that our method converges globally under a significantly weaker version of the standard Lipschitz gradient smoothness assumption. 
Even when Hessian information is inexact, the local unit step size guarantee and global convergence properties remain valid under mild conditions.
Finally, we validate our theoretical results empirically on a range of convex and nonconvex machine learning tasks.

\end{abstract}

\section{Introduction}

Consider the optimization problem 
\begin{align}
\label{eqn:optimisation problem}
    \min_{\xx \in \real^d} f(\xx), 
\end{align}
where $f:\real^{d} \to \real$ is twice continuously differentiable, bounded below, and possibly nonconvex. 
Arguably, the simplest and most widely used workhorse for solving such problems in large-scale machine learning is gradient descent (GD) and its stochastic variants \cite{lanFirstorderStochasticOptimization2020}. 
Recall that an iteration of GD takes the form 
\begin{align*}
    \xx_{k+1} = \xx_k - \alpha_k \bgg_k,
\end{align*}
where $\bggk \defeq \bgg(\xxk) = \nabla f(\xxk)$ is the gradient of $f$ at $\xxk$ and $\alpha_k$ is a step size. Gradient descent is favored for its simplicity, low per-iteration cost, and convergence properties in nonconvex settings. With an arbitrary initialization, $\xxo$, the only hyperparameter is the step size, which may vary across iterations. Choosing this parameter has been the focus of significant research. Often GD is  analyzed using the Lipschitz smoothness of the gradient to select a constant step size that guarantees a monotonic decrease in function value. However, this strategy is typically impractical, as it relies on knowing the Lipschitz constant and results in overly conservative steps \citep{malitskyAdaptiveGradientDescent2020}. In modern machine learning, learning rates and schedules are usually tuned through trial and error. While this can deliver strong empirical performance, it incurs significant computational cost, lacks theoretical justification, and must be repeated whenever the model or dataset changes.

Backtracking line search is a principled and often more robust alternative \citep{nocedalNumericalOptimization2006} to constant step sizes and trial-and-error tuning. It comes with theoretical convergence guarantees, does not require problem-specific constants, and offers some adaptivity to local geometry \citep{fox2024glocal}, leading to a recent resurgence of interest in deep learning applications \citep{vaswaniPainlessStochasticGradient2021,galliDontBeMonotone2023}. A key challenge, however, lies in the choice of initial step size in the backtracking procedure. If it is too large, many backtracking steps are needed; if too small, the method may fail to explore larger step sizes, as illustrated in \cref{fig:logistic regression line search example}. To mitigate this, various initialization strategies have emerged, including heuristics \citep[Chapter 3]{nocedalNumericalOptimization2006}, resetting schemes \citep{vaswaniPainlessStochasticGradient2021}, and Polyak-based methods \citep{galliDontBeMonotone2023}, though the latter requires a knowledge of a lower bound on the objective function as well as an explicit limit on the step size.

\begin{figure}
    \centering
    \begin{minipage}{0.55\linewidth}
        \includegraphics[width=\linewidth]{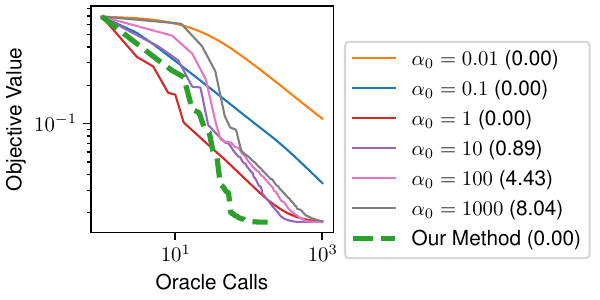}
    \end{minipage}%
    \hspace{4mm}
    \begin{minipage}{0.4\linewidth}
        \caption{Logistic regression on the Mushrooms dataset \citep{mushrooms}. We compare our scaling approach to standard GD with backtracking line search, using different initialization of  the backtracking procedure ($\alpha^0$), and assess performance based on oracle calls (\cref{apx:numerical-results-oracle-calls}). The average number of backtracking steps per iteration is reported in brackets.}
        \label{fig:logistic regression line search example}
    \end{minipage}
\end{figure}


In the case of line search applied to second-order methods, the situation is markedly different. These methods incorporate local curvature information to construct properly scaled search directions, for which a unit step size yields a sufficient decrease in the objective function\footnote{This brings to mind the following quote about the significance of a unit step size: \textit{“Any optimization algorithm for which the unit step length works has some wisdom. It is too much of a fluke if the unit step length [accidentally] works.”} - J. Nocedal (\href{https://www.stat.berkeley.edu/~mmahoney/talks/second_order_ml_sum17.pdf}{Source}.)}. This property makes second-order methods notably less sensitive to hyperparameter choices \citep{xu2020SecondOrderNonconvexEmpiricalStudy} and allows the unit step to serve as a natural initial guess in backtracking line search \citep{boydConvexOptimization2004,nocedalNumericalOptimization2006}. Indeed, practical experience suggests that backtracking is typically only required for a handful of iterations when using Newton's method. Unfortunately, second-order methods are often impractical for large-scale problems.

Together, these facts lead us to the motivating question for our paper: \emph{Can we employ local curvature to properly, and efficiently, scale the gradient itself?}
To that end, we propose our ``first-ish order'' method, which employs updates of the form:
\begin{align}
    \label{eq:method}
    \xx_{k+1} = \xx_k + \alphak \pp_k, \quad \pp_k = -s_k \bgg_k,
\end{align}
where $s_k > 0$ is an adaptive, \textit{Hessian-aware} scaling and $\alphak > 0$ is a step size chosen by line search, to ensure global convergence. 
Remarkably, we show that certain Hessian-aware scalings can endow GD with a local unit step size guarantee in a similar manner to second-order methods. Our approach is extremely simple and only requires access to local, directional curvature information, which can be calculated with a single Hessian-vector product.
The key to our results is the fact that the scaled gradient direction, $\ppk$, satisfies the following \textit{second-order descent} condition\footnote{Note that \cref{eqn:second-order descent} has also been called ``2.5\th order descent'' as it is a stronger condition than the typical reduction in the local quadratic approximation to the objective.}:
 \begin{align}
    \label{eqn:second-order descent}
    \dotprod{\bggk, \ppk} + \dotprod{\ppk, \HHk \ppk} \leq 0,
\end{align}
where $\HHk \defeq \HH(\xxk) = \nabla^{2} f(\xxk)$ is the Hessian of $f$ at $\xxk$. When curvature along $\pp$ is positive, \cref{eqn:second-order descent} strengthens the standard first-order descent condition $\dotprod{\bgg_k, \pp_k} < 0$. The condition \cref{eqn:second-order descent} is satisfied by search directions in second-order methods such as Newton’s method and its inexact variants (e.g., Newton-MR, Newton-CG) when non-positive curvature is not encountered \citep{liuNewtonMRAlgorithmComplexity2023,nocedalNumericalOptimization2006}. It avoids crude curvature bounds and enables second-order analysis without requiring problem-specific constants. Our method handles negative curvature by direct verification along the gradient direction, allowing for use in nonconvex settings without intrusive modification of the Hessian.


\paragraph{Contributions.} Our contributions are outlined as follows:
\begin{enumerate}
    \item In \cref{sec:global}, we show that when the gradient is small, our scaled gradient direction satisfies the Armijo condition with unit step size, even for nonconvex $f$. We also prove global convergence under generalized smoothness assumptions that extend beyond standard Lipschitz continuity of the gradient and Hessian. An extension to inexact Hessians is discussed in \cref{apx:inexact Hessian}.
    \item In \cref{sec:local}, we establish local convergence results for our method. Specifically, we show that under certain conditions near a minimum, the unit step size is consistently accepted by the line search condition, leading to local linear convergence of the function value. Furthermore, we show our method can potentially attain an improved rate over standard gradient descent. For a particular scaling, we also prove local linear convergence of the gradient norm. 
    \item Finally, in \cref{sec:numerical results}, we validate our results numerically on large scale, nonconvex objectives arising from problems in data science.
\end{enumerate}

\section{Hessian-aware Scaling} \label{sec:approach}
We now introduce and motivate our Hessian-aware scalings and  highlight their key properties.

\subsection{Our Approach}
\label{sec:hess_aware}
To determine the scaling, $s_k$, in \cref{eq:method} required for the update direction, we consider solving the Newton system subproblem with the search direction fixed to the negative gradient. We also account for varying curvature along the gradient direction, by considering the cases of strongly positive, limited positive, and negative curvature separately.

\paragraph{Strong Positive Curvature ($\SPC$).}

For strongly convex problems, the Newton direction minimizes the local quadratic approximation: $\min_{\pp} \dotprod{\pp,\bgg} + \dotprod{\pp, \HH \pp}/2$. Since our step direction is fixed to the negative gradient, we restrict $\pp = -s\bgg$, and, assuming $\gHg > 0$, optimize over $s$ to obtain
\begin{align}
    \label{eqn:CG scaling defn}
    \sCG = \frac{ \vnorm{\bgg}^2}{\langle \bgg, \HH \bgg \rangle}.
\end{align}
The notation $\sCG$ reflects a connection to the conjugate gradient (CG) method. Indeed, the CG scaled direction, $\pp =-\sCG \bgg$ is equivalent to the first inner iteration of Newton-CG, an inexact Newton method method based on applying CG to the Newton system \cite{nocedalNumericalOptimization2006}.
Alternatively, we can also view the Newton direction as the solution to the least squares problem $\min_{\pp} \vnorm{\HH \pp + \bgg}^{2}$. Applying the restriction, $\pp = -s\bgg$, and optimizing over $s$ yields an alternative scaling
\begin{align}
    \label{eqn:MR scaling defn}
    \sMR = \frac{\dotprod{\bgg, \HH \bgg} }{\vnorm{\HH \bgg}^2}.
\end{align}
The notation $\sMR$ highlights a connection to the minimum residual (MINRES) method \cite{paige1975solution}. In fact, $\pp = -\sMR\bgg$ is the search direction produced on the first inner iteration of Newton-MR, an inexact Newton method which employs MINRES as a subproblem solver \cite{roostaNewtonMRInexactNewton2022, liuNewtonMRAlgorithmComplexity2023,smeeInexactNewtontypeMethods2024,limComplexityGuaranteesNonconvex2024}. The link between the CG and MR scalings and inexact second-order methods is an impetus for our study of scaled gradient methods. Indeed, our curvature validation was inspired by the treatment of limited positive curvature in \cite{smeeInexactNewtontypeMethods2024, limComplexityGuaranteesNonconvex2024}.

From these two fundamental scalings, additional factors like the geometric mean of CG and MR scalings can be derived:
\begin{align}
    \label{eqn:GM scaling defn}
    \sGM = \sqrt{\sMR\sCG} = \frac{\vnorm{\bgg}}{\vnorm{\HH \bgg}}.
\end{align}
The CG, GM, and MR scalings each reflect an aspect of the inverse Hessian restricted to one dimension. For example, the CG scaling in \cref{eqn:CG scaling defn} is an inverse Rayleigh quotient of the Hessian, $\HH$, with respect to $\bgg$, while the MR scaling in \cref{eqn:MR scaling defn} corresponds to a Rayleigh quotient of the Hessian \textit{pseudo-inverse}, $\HH^\dagger$, with respect to $\HH\bgg$ (using $\HH \HH^\dagger \HH = \HH$).
In the univariate case the update in \cref{eq:method} collapses to the exact Newton step for all scalings. Thus, these scalings are most effective under $\SPC$ along the gradient direction, i.e., $\gHg > \sigma \vnorm{\bgg}^2$ for some $\sigma > 0$. The $\SPC$ condition can be interpreted as a strong convexity condition \textit{restricted to the gradient direction}.

\paragraph{Negative Curvature ($\NC$).}
In nonconvex settings, the indefiniteness of the Hessian can make the gradient a $\NC$ direction, i.e., $\gHg < 0$. Since $-\bgg$ is already a descent direction, both the first and second directional derivatives 
along $-\bgg$ are negative, regardless of scaling choice. Previous theoretical results \cite{gouldExploitingNegativeCurvature2000,curtisExploitingNegativeCurvature2019,liuNewtonMRAlgorithmComplexity2023} and practical experience suggest that substantial progress can be made with large steps along negative curvature directions. 

\paragraph{Limited Positive Curvature ($\LPC$).}
The $\LPC$ case, where $0 \leq \gHg \leq \sigma \vnorm{\bgg}^2$, represents a middle ground between $\SPC$ and $\NC$, characterized by small but non-negative curvature along $\bgg$. Here, $\sigma$ acts like a gradient Lipschitz constant, constraining the second-order term in the Taylor expansion along $-\bgg$. Since second-order information is unreliable in this regime, we revert to gradient descent with a step size independent of second-order information. Thanks to the curvature bound $\sigma$, scalings satisfying $s \leq 1/\sigma$ maintain second-order descent properties akin to those in the $\SPC$ and $\NC$ cases. 

The above discussions are summarized in \cref{alg:scaling selection}. At first glance, \cref{alg:scaling selection} comes with a number of hyperparameters. However most of these hyperparameters come with natural settings and do not require tuning. We detail these settings in \cref{apx:scaled gradient parameters discussion}.
\begin{algorithm}[ht]
    \begin{algorithmic}[1]
        \STATE \textbf{Inputs}: Gradient $\bgg$, Hessian $\HH$, and $\SPC$ scaling tolerance $\sigma > 0$.
        \vspace{1mm} \STATE Set the range for $\LPC$ scaling as $\sLPC_{\tmin} \in (0, 1/\sigma)$.
        \vspace{1mm} \STATE Set the range for $\NC$ scaling as $ 0 < \sNC_{\tmin} \leq \sNC_{\tmax} < \infty$.
        \IF{ $\dotprod{ \bgg, \HH \bgg}  > \sigma \vnorm{\bgg}^2$ }
            \vspace{1mm} \STATE  choose $\sSPC \in \{ \sMR, \sCG, \sGM \}$, set $\pp = -\sSPC \bgg$, set $\texttt{FLAG} = \SPC$.
        \vspace{1mm} \ELSIF{ $0 \leq \dotprod{ \bgg, \HH \bgg} \leq \sigma \vnorm{\bgg}^2$ }
            \vspace{1mm} \STATE choose $\sLPC \in [\sLPC_{\tmin}, 1/\sigma]$, set $\pp = -\sLPC \bgg$, set $\texttt{FLAG} = \LPC$.
        \vspace{1mm} \ELSE 
            \vspace{1mm} \STATE choose $\sNC \in [\sNC_{\tmin}, \sNC_{\tmax}]$, set $\pp = - \sNC \bgg$, set $\texttt{FLAG} = \NC$.
        \vspace{1mm} \ENDIF  
        \STATE \textbf{Return} $\pp$, $\texttt{FLAG}$.
    \end{algorithmic}
    \caption{Hessian-aware Scaling Selection}
    \label{alg:scaling selection}
\end{algorithm}

\begin{remark}
Computationally, \cref{alg:scaling selection} requires one additional Hessian-vector product, $\HH \bgg$, which can be computed efficiently without forming the full Hessian using automatic differentiation \cite{pearlmutterFastExactMultiplication1994, baydinAutomaticDifferentiationMachine2018}. This involves an extra forward and backward pass through the computational graph and some additional memory overhead. While this adds some cost to each iteration, our experiments (\cref{sec:numerical results}) show that our scaling method largely eliminates the need for backtracking in line search, reducing the total number of function evaluations (forward passes) compared to unscaled line search.
\end{remark}

\paragraph{Basic Properties} We now collect some basic properties of the scalings produced by \cref{alg:scaling selection}. We relegate all proofs to \cref{apx:proof-basic-properties}. 


\begin{proposition}[Scaling Upper Bounds] \label{prop:scaling upper bounds}
    In the $\SPC$ case, $0< \sMR \leq \sGM \leq  \sCG \leq 1/\sigma$, while for the $\LPC$ case, $0 < \sLPC \leq 1/\sigma$. Finally, in the $\NC$ case, $0 < \sNC \leq \sNC_{\max}$.
\end{proposition}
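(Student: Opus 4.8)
The plan is to handle the three curvature regimes separately, observing that the $\LPC$ and $\NC$ bounds are immediate from the construction in \cref{alg:scaling selection}, so that the $\SPC$ chain is the only case requiring genuine work—and even there the entire content reduces to a single application of Cauchy--Schwarz. Throughout I assume $\bgg \neq 0$ (otherwise the scalings are undefined), which is harmless since at a stationary point the method has already converged.

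For the $\SPC$ case I would proceed in three steps. First, establish positivity: the defining condition $\gHg > \sigma\vnorm{\bgg}^2 > 0$ forces $\HH\bgg \neq 0$ and makes the denominators of $\sCG$, $\sMR$, and $\sGM$ all strictly positive, so each scaling is positive. Second, collapse the two middle inequalities onto one. Since $\sGM = \sqrt{\sMR\sCG}$, the claim $\sMR \leq \sGM$ is equivalent to $\sMR^2 \leq \sMR\sCG$ and the claim $\sGM \leq \sCG$ is equivalent to $\sMR\sCG \leq \sCG^2$; with all three quantities positive, both reduce to the single inequality $\sMR \leq \sCG$. Writing this out,
\begin{align*}
    \sMR \leq \sCG \iff \frac{\gHg}{\vnorm{\HH\bgg}^2} \leq \frac{\vnorm{\bgg}^2}{\gHg} \iff \left(\gHg\right)^2 \leq \vnorm{\bgg}^2\vnorm{\HH\bgg}^2,
\end{align*}
and the final inequality is exactly Cauchy--Schwarz applied to the vectors $\bgg$ and $\HH\bgg$. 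Third, the upper bound $\sCG \leq 1/\sigma$ is merely a restatement of the $\SPC$ condition, since $\vnorm{\bgg}^2/\gHg \leq 1/\sigma$ is equivalent to $\sigma\vnorm{\bgg}^2 \leq \gHg$.

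For the remaining two cases there is nothing to prove beyond unwinding the construction. In the $\LPC$ case, \cref{alg:scaling selection} selects $\sLPC \in [\sLPC_{\min}, 1/\sigma]$ with $\sLPC_{\min} \in (0, 1/\sigma)$, so $0 < \sLPC \leq 1/\sigma$ follows directly; in the $\NC$ case it selects $\sNC \in [\sNC_{\min}, \sNC_{\max}]$ with $0 < \sNC_{\min} \leq \sNC_{\max} < \infty$, so $0 < \sNC \leq \sNC_{\max}$ is immediate. I do not expect any real obstacle: the only non-mechanical observation in the whole argument is recognizing that the ordered chain $\sMR \leq \sGM \leq \sCG$ is governed entirely by the geometric-mean identity together with Cauchy--Schwarz, with the endpoint bound supplied by the $\SPC$ threshold itself.
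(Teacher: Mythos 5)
Your proof is correct and takes essentially the same route as the paper's: the $\LPC$ and $\NC$ bounds are read off from the construction in \cref{alg:scaling selection}, positivity and well-definedness follow from the $\SPC$ threshold, the endpoint bound $\sCG \leq 1/\sigma$ is a restatement of that threshold, and the middle chain rests on Cauchy--Schwarz. The only cosmetic difference is bookkeeping: the paper applies Cauchy--Schwarz twice (in the forms $\gHg \leq \vnorm{\bgg}\vnorm{\HH\bgg}$ and $\vnorm{\HH\bgg} \geq \gHg/\vnorm{\bgg}$) to derive $\sMR \leq \sGM \leq \sCG$ directly, whereas you use the identity $\sGM = \sqrt{\sMR\sCG}$ to collapse both middle inequalities to the single inequality $\sMR \leq \sCG$ and apply Cauchy--Schwarz once.
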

The proof of this result demonstrates the importance of validating the $\SPC$ condition to obtain an upper bound in the $\SPC$ and $\LPC$ cases. 


\begin{proposition}[Second-order Descent]\label{prop:second-order descent}
    Suppose $\bgg \neq 0$. The direction $\pp$ returned by \cref{alg:scaling selection} satisfies both the first-order descent condition $\dotprod{\bgg, \pp} < 0$ and the second-order descent condition \cref{eqn:second-order descent}.
\end{proposition}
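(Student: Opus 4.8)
The plan is to substitute the scaled direction $\pp = -s\bgg$ into both conditions, reduce them to a single scalar inequality, and then verify that inequality separately in each curvature regime using \cref{prop:scaling upper bounds}.

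First I would dispatch the first-order descent condition, which holds uniformly across all branches: since $\pp = -s\bgg$ with $s > 0$ (guaranteed in every case by \cref{prop:scaling upper bounds}) and $\bgg \neq 0$, we have $\dotprod{\bgg,\pp} = -s\vnorm{\bgg}^2 < 0$ immediately. Expanding the left-hand side of \cref{eqn:second-order descent} gives
\begin{align*}
\dotprod{\bgg,\pp} + \dotprod{\pp, \HH\pp} = -s\vnorm{\bgg}^2 + s^2 \dotprod{\bgg,\HH\bgg},
\end{align*}
and since $s > 0$ this quantity is nonpositive if and only if $s\dotprod{\bgg,\HH\bgg} \leq \vnorm{\bgg}^2$. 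Thus the entire proposition reduces to verifying this one scalar bound in each case.

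I would then treat the three branches of \cref{alg:scaling selection} in turn. In the $\NC$ case, $\dotprod{\bgg,\HH\bgg} < 0$, so the quadratic term is already negative and the bound holds strictly, both summands being negative. In the $\LPC$ case, the branch condition supplies $\dotprod{\bgg,\HH\bgg} \leq \sigma\vnorm{\bgg}^2$ while \cref{prop:scaling upper bounds} supplies $s = \sLPC \leq 1/\sigma$, so that $s\dotprod{\bgg,\HH\bgg} \leq (1/\sigma)\sigma\vnorm{\bgg}^2 = \vnorm{\bgg}^2$. In the $\SPC$ case, $\dotprod{\bgg,\HH\bgg} > 0$ and the target bound rearranges to $s \leq \vnorm{\bgg}^2/\dotprod{\bgg,\HH\bgg} = \sCG$; this is precisely the chain $0 < \sMR \leq \sGM \leq \sCG$ from \cref{prop:scaling upper bounds}, so any admissible $\sSPC \in \{\sMR, \sCG, \sGM\}$ works, with equality attained when $s = \sCG$.

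The argument is short, and the only real subtlety is recognizing that second-order descent genuinely requires the scaling upper bounds rather than mere positivity of $s$. It is the validation of the $\SPC$ threshold $\sigma$ that simultaneously caps $\sLPC$ at $1/\sigma$ and identifies $\sCG = \vnorm{\bgg}^2/\dotprod{\bgg,\HH\bgg}$ as the largest admissible scaling; consequently I would lean entirely on \cref{prop:scaling upper bounds} for the per-case bounds rather than re-deriving them, since that proposition was engineered precisely to make this reduction immediate.
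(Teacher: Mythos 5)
Your proof is correct, but it is organized differently from the paper's. The paper's proof of \cref{prop:second-order descent} handles the $\SPC$ case by three separate direct computations: for $\sCG$ it shows $\dotprod{\bgg,\pp} + \dotprod{\pp,\HH\pp} = 0$ exactly, and for $\sMR$ and $\sGM$ it re-applies the Cauchy--Schwarz inequality inline to each expansion. You instead observe up front that, since $s>0$, the condition \cref{eqn:second-order descent} is equivalent to the single scalar inequality $s\dotprod{\bgg,\HH\bgg} \leq \vnorm{\bgg}^2$, which under positive curvature is exactly $s \leq \sCG$; the whole $\SPC$ case then follows in one stroke from the ordering $0 < \sMR \leq \sGM \leq \sCG$ already established in \cref{prop:scaling upper bounds}. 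This is a genuine restructuring rather than new mathematics—the Cauchy--Schwarz content is simply absorbed into \cref{prop:scaling upper bounds}, whose proof derives that chain—but it buys two things: it avoids duplicating the Cauchy--Schwarz algebra per scaling, and it makes transparent the structural fact that $\sCG$ is precisely the largest scaling compatible with second-order descent under positive curvature (with equality attained there), which the paper's case-by-case verification leaves implicit. The paper's version, conversely, is self-contained at the point of proof and exhibits the exact identity for the CG scaling explicitly. Your handling of the $\NC$ and $\LPC$ branches matches the paper's, and your appeals to $s>0$ and well-definedness of the scalings are legitimately covered by \cref{prop:scaling upper bounds}; there is no gap.
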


\begin{remark}
Our selection of scalings satisfying second-order descent is not exhaustive. For instance, if $f$ has an $\Lg$-Lipschitz gradient, $s = 1/\Lg$ ensures \cref{eqn:second-order descent} by conservatively controlling curvature. However, this choice is not adaptive to local curvature, and $\Lg$ may be unknown.
\end{remark}

A well-known property of the classical Newton step is its invariance under affine transformations \citep[Chapter 9.5]{boydConvexOptimization2004}. While directions parallel to gradient cannot achieve full affine invariance for general objectives, \cref{prop:scalar invariance} shows that our Hessian-aware scalings lead to invariance to \textit{scalar} transformations.
\begin{proposition}[Scalar Invariance] 
\label{prop:scalar invariance} 
Consider \cref{eq:method} with $s_k = \sSPC_k$ for all $k$, applied to $f(\xx)$ and its scalar reparameterization $f(\yy)$ where $\yy =  \xx/c$ for any $c \neq 0$. Then $\yy_{k} = \xxk/c$ for all $k$.
\end{proposition}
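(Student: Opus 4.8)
The plan is to argue by induction on $k$, maintaining the invariant $\yyk = \xxk/c$ with the base case supplied by the consistent initialization $\yyo = \xxo/c$. First I would write the reparameterized objective explicitly as $\ft(\yy) \defeq f(c\yy)$, so that minimizing $\ft$ in the $\yy$-coordinates is equivalent to minimizing $f$ in the $\xx$-coordinates under $\xx = c\yy$. The chain rule gives $\nabla\ft(\yy) = c\,\bgg(c\yy)$ and $\nabla^2\ft(\yy) = c^2\,\HH(c\yy)$, so under the inductive hypothesis $c\yyk = \xxk$ the reparameterized gradient and Hessian evaluated at $\yyk$ are $\tggk = c\,\bggk$ and $\tHHk = c^2\,\HHk$.

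The first key step is to track how each $\SPC$ scaling transforms. Substituting $\tggk = c\bggk$ and $\tHHk = c^2\HHk$ into \cref{eqn:CG scaling defn}, \cref{eqn:MR scaling defn}, and \cref{eqn:GM scaling defn} and cancelling the powers of $c$ yields $\sSPCt_k = \sSPC_k/c^2$ in all three cases (numerator and denominator pick up $c^2$ and $c^4$ for $\sCG$, $c^4$ and $c^6$ for $\sMR$, and $|c|$ and $|c|^3$ for $\sGM$). Consequently the scaled direction transforms as $\ppt_k = -\sSPCt_k\,\tggk = -(\sSPC_k/c^2)(c\bggk) = \ppk/c$; that is, the reparameterized direction is exactly $1/c$ times the original.

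The second key step is to show the line search returns the same step size in both coordinate systems. Using $\ft(\yy) = f(c\yy)$ and $\ppt_k = \ppk/c$, any trial point satisfies $\ft(\yyk + \alpha\,\ppt_k) = f(c\yyk + \alpha c\,\ppt_k) = f(\xxk + \alpha\,\ppk)$, while the directional derivative is invariant, $\dotprod{\tggk, \ppt_k} = \dotprod{c\bggk,\, \ppk/c} = \dotprod{\bggk, \ppk}$. Hence the Armijo sufficient-decrease test for $\ft$ at a trial step $\alpha$ coincides identically with the test for $f$ at the same $\alpha$. Since backtracking begins from the same initial step (the unit step) and contracts by the same factor, it accepts the same value $\alpha_k$ in both problems. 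The induction then closes: $\yykk = \yyk + \alpha_k\,\ppt_k = \xxk/c + \alpha_k\,\ppk/c = \xxkk/c$.

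The only real subtlety — and the reason the statement is restricted to $s_k = \sSPC_k$ rather than the full branching of \cref{alg:scaling selection} — is that the case test $\dotprod{\bgg,\HH\bgg} > \sigma\vnorm{\bgg}^2$ is not itself scale-invariant: under the reparameterization its two sides scale as $c^4$ and $c^2$, so the fixed threshold $\sigma$ would in general route the two runs into different branches. Fixing the scaling family to $\SPC$ sidesteps this, after which the argument is just the bookkeeping above. The one point I expect to be worth stating carefully is the line-search invariance, since that is what forces $\alpha_k = \tilde\alpha_k$ and hence makes the trajectories coincide rather than merely run parallel.
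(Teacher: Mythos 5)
Your proof is correct and follows essentially the same route as the paper's: chain rule giving $\bggt = c\,\bgg$ and $\HHt = c^2\HH$, hence $\st = s/c^2$ and $\ppt = \pp/c$, closed by induction on $k$. You in fact go slightly further than the paper, whose proof simply assumes a common fixed step size $\alpha > 0$, whereas you verify that the Armijo backtracking is itself invariant (identical trial function values and directional derivative, so the same $\alpha_k$ is accepted in both coordinate systems) and you correctly note that the $\SPC$ branching test $\dotprod{\bgg, \HH\bgg} > \sigma\vnorm{\bgg}^2$ is not scale-invariant, which is precisely why the statement fixes $s_k = \sSPC_k$.
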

As shown in \cref{prop:scalar invariance}, our method is fully invariant to scalar coordinate transformations in the strongly convex setting, where $\SPC$ scalings are used at each step. This contrasts with standard gradient descent, which is highly sensitive to such changes.


\subsection{Related Works}
\label{sec:related_work}
\paragraph{Quadratic Problems.} 
The scalings \cref{eqn:CG scaling defn,eqn:MR scaling defn} are well-studied for strongly convex quadratic functions; see \cite{gonzagaSteepestDescentAlgorithm2016,macdonaldFamilyRelaxedGradient2024} and references therein. In fact, the scalings correspond to exact line search methods along the gradient direction for $f$ and $\vnorm{\bgg}^2$, known as steepest descent and minimal gradient methods, respectively\footnote{Beyond quadratics, there is no correspondence to exact line search, so we avoid labeling \cref{eqn:CG scaling defn,eqn:MR scaling defn} as such.}. The GM scaling \cref{eqn:GM scaling defn} has also been studied for such problems by \cite{daiNewGradientMethod2006}, showing that for $\alpha > 0$, $\sGM$ estimates an inverse gradient Lipschitz constant.


\vspace{-1mm}
\paragraph{Barzilai-Borwein Methods.} 
The Barzilai-Borwein (BB) method \cite{barzilaiTwopointStepSize1988, fletcherBarzilaiBorweinMethod2005, daiFamilySpectralGradient2019} is also based on scalar minimization of a second-order approximation. For quadratics, the long and short BB step sizes are equivalent to our CG and MR scalings, shifted by one iteration, due to their equivalence to steepest descent and minimal gradient methods. While BB achieves R-linear convergence for strongly convex quadratics, its convergence for non-quadratic objectives cannot be guaranteed \citep{burdakov2019stabilized}.


\vspace{-1mm}
\paragraph{Smoothness and Adaptivity.} 
The global convergence analysis in this paper does not rely on the conventional Lipschitz gradient smoothness assumption, which has come under increasing scrutiny in recent works. Many machine learning objectives, such as feedforward or recurrent neural networks, fail to satisfy this condition \cite{patelGlobalConvergenceStability2022}. Recent studies suggest that this assumption may not hold even along the optimization trajectory \cite{cohenGradientDescentNeural2022}. Instead, \citet[Remark 5]{ahnUnderstandingUnstableConvergence2022} argue that {\em Lipschitz Hessian smoothness} is more appropriate; our work adopts a weaker form of this assumption (\cref{ass:Hessian gradient smoothness condition}). In line with these ideas, \citet{patelGradientDescentAbsence2023} study gradient descent with diminishing step sizes under local Lipschitz smoothness, a weaker assumption than the standard one.

Like our method, a number of works have incorporated local information into step size selection, e.g., the Polyak step size \citep{polyakIntroductionOptimization1987} and its variants \citep{loizouStochasticPolyakStepsize2021,orvieto2022dynamics,oikonomou2024stochastic}. \citet{malitskyAdaptiveGradientDescent2020,malitskyAdaptiveProximalGradient2024} adaptively set step sizes using local Lipschitz estimates and control sequences, ensuring global convergence under convexity and locally Lipschitz gradients. \citet{mishkinDirectionalSmoothnessGradient2024} explores adaptive step sizes via directional smoothness, closely related to our use of gradient curvature. \citet{grimmerProvablyFasterGradient2024} show that larger step sizes can improve rates, while \citet{altschulerAccelerationStepsizeHedging2023} demonstrate that combining long and short steps enhances convergence in convex settings. \citet{berahasNonUniformSmoothnessGradient2023} investigates local first-order smoothness oracles, while methods like D-adaptation \cite{defazioLearningRateFreeLearningDAdaptation2023,mishchenkoProdigyExpeditiouslyAdaptive2024} and Do(W)G \cite{ivgiDoGSGDsBest2023,khaledDoWGUnleashedEfficient2024} achieve parameter-free convergence by adaptively estimating problem constants.

\vspace{-1mm}
\paragraph{Quadratic Model Scaling } 
Utilizing a quadratic model to rescale search directions is not new. The KFAC method \citep{martensOptimizingNeuralNetworks2020} rescales directions using a quadratic model based on the Fisher information matrix. Similarly,  \citet{rouletSteppingEdgeCurvature2024} adaptively set the step size using a quadratic model of the objective along a search direction, akin to our CG scaling. \citet{rouletSteppingEdgeCurvature2024} show that curvature-aware scaling can align optimization dynamics with the edge of stability \citep{cohenGradientDescentNeural2022}, though they do not exploit negative curvature or provide theoretical guarantees. \citet{casteraSecondOrderStepSizeTuning2022} use a quadratic model (similar to our CG scaling) to verify curvature, enabling BB step sizes in nonconvex and stochastic settings, and also find that large step sizes are viable with negative curvature. \citet{degournay2022adaptivescalinglearningrate} rescale step directions using directional second-order information, applying a moving average of directional curvatures, but focus on practical implementation without convergence theory.

\section{Convergence Analyses} \label{sec:convergence-analysis}
In \cref{sec:global}, we study the global behavior of \cref{eq:method} with \cref{alg:scaling selection} and the classical Armijo line search; local convergence is addressed in \cref{sec:local}. Proofs are deferred to \cref{apx:proof-global-convergence}. We note that an extension to the case where the Hessian is only known inexactly is provided in \cref{apx:inexact Hessian}.

\subsection{Unit Step Size Guarantee and Global Convergence}
\label{sec:global}

To globalize the iteration in \cref{eq:method}, we select a step size via the Armijo condition, which requires $\alpha$ to satisfy a sufficient reduction in the function value
\begin{align}
    \label{eqn:armijo condition} 
    f(\xx + \alpha \pp) \leq f(\xx) + \rho \alpha \dotprod{\pp, \bgg}, 
\end{align}
for some constant $\rho \in (0,1/2)$.
The resulting method is depicted in \cref{alg:scaled gradient}, where backtracking (\cref{alg:back tracking line search} in \cref{sec:appendix:linesearch_algs}) and forward tracking (\cref{alg:forward tracking line search} in \cref{sec:appendix:linesearch_algs}) strategies are used to find $\alphak$ satisfying \cref{eqn:armijo condition}. 
\begin{algorithm}[ht]
    \begin{algorithmic}[1]
        \STATE \textbf{Input}: Line search parameter $\rho < 1/2$, termination tolerance $\eg > 0$, initialization $\xxo$, $k=0$.
        \vspace{1mm}\WHILE{$\vnorm{\bgg_k} \geq \eg$}
        \vspace{1mm}\STATE $[\ppk, \texttt{FLAG}]  \leftarrow$ Call \cref{alg:scaling selection} with $\HHk$, $\bggk$ and $\sigma$
        \vspace{1mm}\STATE For $\texttt{FLAG} = \SPC/\LPC$, use \cref{alg:back tracking line search} with $\alpha^0=1$ to find $\alpha_k \in (0,1]$ satisfying \cref{eqn:armijo condition}. For $\texttt{FLAG} = \NC$, use \cref{alg:forward tracking line search} with $\alpha^0=1$ to find $\alpha_k \in (0,\infty)$ satisfying \cref{eqn:armijo condition}. 
        \vspace{1mm}\STATE $\xx_{k+1} = \xx_k +\alpha_k \pp_k$
        \vspace{1mm}\STATE $k \gets k+1$
        \ENDWHILE
    \end{algorithmic}
    \caption{Scaled Gradient Descent With Line Search}
    \label{alg:scaled gradient}
\end{algorithm}

The primary assumption our analysis is built on is a weakened version of the typical Lipschitz Hessian smoothness condition, which requires smoothness to hold only along the negative-gradient direction.

\begin{assumption}[Hessian Directional Smoothness] \label{ass:Hessian smoothness condition}
    There exists, $0 \leq L_2 < \infty$ such that, $\forall \xx \in \real^d$ and $\forall t\geq 0$ we have $
        \vnorm{\HH\left(\xx - t\bgg(\xx)\right) - \HH(\xx) } \leq t L_2 \vnorm{\bgg(\xx)}$.
\end{assumption}
\begin{remark}
    While \cref{ass:Hessian smoothness condition} is difficult to verify directly, it is implied by $L_H$-Hessian Lipschitz continuity. Indeed, $L_2 \leq L_H$ since \cref{ass:Hessian smoothness condition} applies along a single direction at each $\xx$.
\end{remark}
Our main result states that, under only \cref{ass:Hessian smoothness condition}, the unit step size, $\alpha =1 $, locally ensures sufficient decrease in the function value for the scaled gradient direction, $\pp$, produced by \cref{alg:scaling selection}.

\begin{theorem}[Sufficient Condition for Acceptance of Unit Step Size] \label{thm:unit step size acceptance}
    Consider \cref{ass:Hessian smoothness condition} and let $\pp$ be the direction selected by \cref{alg:scaling selection}. The Armijo condition \cref{eqn:armijo condition} is satisfied with $\alpha=1$ if 
    \begin{align}
        \label{eqn:gradient line search termination}
        \vnorm{\bgg}  \leq \min\left\{\frac{6\sigma^2(1/2 - \rho)}{L_2}, \frac{6(1-\rho)}{L_2 (\sNC_{\tmax})^2} \right\}.
    \end{align}
\end{theorem}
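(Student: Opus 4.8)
The plan is to reduce the Armijo condition at $\alpha = 1$ to a single scalar inequality via a second-order Taylor expansion with integral remainder, then absorb the curvature term using second-order descent (\cref{prop:second-order descent}) and control the cubic remainder using the directional smoothness of the Hessian (\cref{ass:Hessian smoothness condition}). Throughout, write $\pp = -s\bgg$ so that $\xx + \pp = \xx - s\bgg$ and $\dotprod{\bgg, \pp} = -s\vnorm{\bgg}^2$; the case $\bgg = \zero$ is trivial, so assume $\bgg \neq \zero$.

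First I would apply Taylor's theorem to $\tau \mapsto f(\xx + \tau\pp)$ to obtain the exact identity
\begin{align*}
f(\xx + \pp) = f(\xx) + \dotprod{\bgg, \pp} + \int_0^1 (1 - \tau)\dotprod{\pp, \HH(\xx + \tau\pp)\pp}\, d\tau.
\end{align*}
Splitting $\HH(\xx + \tau\pp) = \HH + \bigl(\HH(\xx + \tau\pp) - \HH\bigr)$ and using $\int_0^1 (1-\tau)\,d\tau = 1/2$ gives
\begin{align*}
f(\xx + \pp) = f(\xx) + \dotprod{\bgg, \pp} + \tfrac12\dotprod{\pp, \HH\pp} + E, \quad E = \int_0^1 (1-\tau)\dotprod{\pp, (\HH(\xx - \tau s\bgg) - \HH)\pp}\, d\tau.
\end{align*}

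Next I would bound the remainder $E$. Since $\xx + \tau\pp = \xx - (\tau s)\bgg$, \cref{ass:Hessian smoothness condition} with $t = \tau s$ yields $\vnorm{\HH(\xx - \tau s\bgg) - \HH} \leq \tau s L_2\vnorm{\bgg}$; combining this with $\vnorm{\pp}^2 = s^2\vnorm{\bgg}^2$ and $\int_0^1 (1-\tau)\tau\,d\tau = 1/6$ produces the clean estimate $|E| \leq (s^3 L_2/6)\vnorm{\bgg}^3$ (this is where the constant $6$ originates). It therefore suffices to verify
\begin{align*}
(1 - \rho)\dotprod{\bgg, \pp} + \tfrac12\dotprod{\pp, \HH\pp} + \tfrac{s^3 L_2}{6}\vnorm{\bgg}^3 \leq 0.
\end{align*}

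The heart of the argument, and the main obstacle, is handling the curvature term $\tfrac12\dotprod{\pp, \HH\pp}$, whose sign differs across cases and is exactly what produces the two entries of the minimum. In the $\SPC$ and $\LPC$ cases this term is nonnegative, so I would invoke second-order descent, $\dotprod{\pp, \HH\pp} \leq -\dotprod{\bgg, \pp}$, to obtain the leading factor $(1/2 - \rho)\dotprod{\bgg, \pp}$; dividing through by $s\vnorm{\bgg}^2 > 0$ reduces the requirement to $\vnorm{\bgg} \leq 6(1/2 - \rho)/(s^2 L_2)$, and since \cref{prop:scaling upper bounds} guarantees $s \leq 1/\sigma$ in these cases, the first entry of the minimum suffices. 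In the $\NC$ case the curvature term is negative and can simply be discarded, leaving the weaker requirement $\vnorm{\bgg} \leq 6(1 - \rho)/(s^2 L_2)$; bounding $s \leq \sNC_{\tmax}$ via \cref{prop:scaling upper bounds} then yields the second entry. Taking the minimum over both regimes completes the proof.
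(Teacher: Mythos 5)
Your proposal is correct and takes essentially the same route as the paper's proof: the cubic Taylor estimate obtained from \cref{ass:Hessian smoothness condition}, absorbing the curvature term via second-order descent (\cref{prop:second-order descent}) in the $\SPC$/$\LPC$ cases while discarding it in the $\NC$ case, and then converting the resulting condition into a bound on $\vnorm{\bgg}$ using the scaling upper bounds $s \leq 1/\sigma$ and $s \leq \sNC_{\tmax}$ from \cref{prop:scaling upper bounds}. The only cosmetic difference is that you specialize to $\alpha = 1$ from the outset, whereas the paper first derives the general step-size thresholds \cref{eqn:positive curvature line search termination,eqn:NC case line search termination} for $\alpha \leq 1$ (which it reuses in the global convergence analysis) before setting $\alpha = 1$.
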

\cref{thm:unit step size acceptance} shows that our gradient scaling is ‘natural,’ as the unit step eventually provides sufficient descent—an attribute typical of Newton-type methods \citep{boydConvexOptimization2004,nocedalNumericalOptimization2006}. The bound in \cref{eqn:gradient line search termination} is a worst-case result and may differ from typical behavior. In practice, as seen in \cref{sec:numerical results}, line search usually accepts $\alpha = 1$ for most iterations. Moreover, \cref{apx:curvature condition extension} shows that our $\SPC$ steps locally satisfy a \textit{curvature condition}, ensuring that the unit step length produces sufficiently large update directions.

Under the standard Lipschitz gradient assumption (with no need for \cref{ass:Hessian smoothness condition}), global convergence of our method follows straightforwardly from the application of line search to the gradient, as the scaling factor, $s$, is bounded below (a consequence of \cref{lemma:scaling lower bounds}). However, a more careful analysis reveals that, through our explicit control of the curvature term via \cref{eqn:second-order descent}, we can also guarantee convergence under \cref{ass:Hessian smoothness condition} and a weaker form of gradient Lipschitz smoothness.
\begin{assumption}[Hessian-gradient Directional Smoothness] 
    \label{ass:Hessian gradient smoothness condition} There exists $0 \leq L_1 < \infty$ such that
    for all $\xx \in \real^d$, if $\dotprod{\bgg(\xx), \HH(\xx) \bgg(\xx)} > 0$, then $ \vnorm{\HH(\xx) \bgg(\xx)} \leq L_1 \vnorm{\bgg(\xx)}$.
\end{assumption}

\begin{remark}
For twice continuously differentiable functions, \cref{ass:Hessian gradient smoothness condition} relaxes the standard $\Lg$-Lipschitz gradient smoothness assumption by requiring regularity only along $\bgg$. Indeed, $L_1 \leq \Lg$. Additionally, the concept of {\em moral smoothness}, introduced by \citet[Assumption 2]{roostaNewtonMRInexactNewton2022}, is strictly weaker than Lipschitz gradient and Hessian assumptions on any sublevel set of the gradient norm, yet it still implies \cref{ass:Hessian gradient smoothness condition}; see  \citet[Lemma 2]{roostaNewtonMRInexactNewton2022}.  
\end{remark}
\begin{proposition}[Global Convergence] \label{prop:global convergence}
    Consider \cref{ass:Hessian gradient smoothness condition,ass:Hessian smoothness condition} and suppose $f$ is lower bounded. For any $0 < \eg < 1$, after at most $K \in \bigO{\eg^{-2}}$  iterations of \cref{alg:scaled gradient}, we have $\vnorm{\bgg_k} \leq \varepsilon_{\bgg}$ for some $0\leq k\leq K$.
\end{proposition}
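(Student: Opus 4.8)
The plan is to establish a uniform per-iteration sufficient decrease and then telescope against the lower bound on $f$. Concretely, I aim to show that each accepted step satisfies $f(\xxk) - f(\xx_{k+1}) \ge c\min\{\vnorm{\bggk}^2, \vnorm{\bggk}^{3/2}\}$ for a constant $c>0$ depending only on $\rho$, $\sigma$, $L_1$, $L_2$, and the $\NC$ scaling range. Since $f$ is lower bounded and $\eg<1$ forces $\min\{\eg^2,\eg^{3/2}\}=\eg^2$, summing this decrease over all iterations with $\vnorm{\bggk}>\eg$ immediately yields the $\bigO{\eg^{-2}}$ bound.

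First I would Taylor-expand $f$ along the ray $\xxk+\alpha\ppk$ with $\ppk=-s_k\bggk$. Because the displacement lies exactly along $-\bggk$, \cref{ass:Hessian smoothness condition} applies directly and bounds the third-order remainder, giving
\[ f(\xxk+\alpha\ppk) \le f(\xxk) + \alpha\dotprod{\bggk,\ppk} + \tfrac{\alpha^2}{2}\dotprod{\ppk,\HHk\ppk} + \tfrac{\alpha^3 s_k^3 L_2\vnorm{\bggk}^3}{6}. \]
I would then use the second-order descent condition \cref{eqn:second-order descent} from \cref{prop:second-order descent} to absorb the curvature term via $\dotprod{\ppk,\HHk\ppk}\le -\dotprod{\bggk,\ppk}=s_k\vnorm{\bggk}^2$. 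Rearranging the Armijo inequality \cref{eqn:armijo condition} (using $\rho<1/2$ and, in the $\SPC/\LPC$ case, $\alpha\le 1$) shows it holds for every $\alpha\le\bar\alpha_k$, where the threshold satisfies the key cancellation $s_k\bar\alpha_k=\sqrt{6(1/2-\rho)/(L_2\vnorm{\bggk})}$ in the $\SPC/\LPC$ case (with $1-\rho$ in place of $1/2-\rho$ in the $\NC$ case): the scaling $s_k$ drops out, so $s_k\bar\alpha_k$ is controlled purely by $L_2$ and $\vnorm{\bggk}$.

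Next I would convert this into a lower bound on the product $s_k\alphak$ for the step the line search actually returns. Either the unit step is accepted, in which case \cref{lemma:scaling lower bounds} --- which uses \cref{ass:Hessian gradient smoothness condition} through $\vnorm{\HHk\bggk}\le L_1\vnorm{\bggk}$ to lower-bound the $\SPC$ scalings --- gives $s_k\alphak=s_k\ge s_{\min}>0$; or the step is produced by the (forward/back)tracking routine and halts within a contraction factor $\tau$ of $\bar\alpha_k$, giving $s_k\alphak\ge\tau s_k\bar\alpha_k=\tau\sqrt{6(1/2-\rho)/(L_2\vnorm{\bggk})}$. Feeding whichever bound applies back into \cref{eqn:armijo condition} produces the decrease $\rho s_k\alphak\vnorm{\bggk}^2\ge c\min\{\vnorm{\bggk}^2,\vnorm{\bggk}^{3/2}\}$, completing the telescoping argument.

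The main obstacle I anticipate is the step-size lower bound, and in particular the $\NC$ branch. There the forward-tracking routine may return $\alphak>1$ and both directional derivatives along $\ppk$ are negative, so I must verify that the halting rule of the forward-tracking algorithm still certifies $s_k\alphak$ bounded below and that the bounded range $[\sNC_{\tmin},\sNC_{\tmax}]$ keeps the cubic remainder term controlled. A secondary subtlety is confirming the $s_k$-cancellation in $s_k\bar\alpha_k$ holds uniformly across the three scaling regimes so that a single constant $c$ can be extracted; the $\SPC$ lower bound in particular hinges on combining \cref{prop:scaling upper bounds} with \cref{ass:Hessian gradient smoothness condition} as in \cref{lemma:scaling lower bounds}.
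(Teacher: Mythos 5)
Your proposal is correct and follows the paper's overall strategy---a worst-case per-iteration decrease in each of the three curvature regimes, obtained from the cubic Taylor bound under \cref{ass:Hessian smoothness condition} plus the second-order descent condition \cref{eqn:second-order descent}, then telescoped against the lower bound on $f$ with $\eg<1$ collapsing $\min\{\eg^2,\eg^{3/2}\}$ to $\eg^2$---but your bookkeeping differs from the paper's in a way worth noting. The paper (\cref{lemma:per iteration decrease}) applies the upper bounds $s\leq 1/\sigma$ and $s\leq \sNC_{\tmax}$ \emph{before} solving for the Armijo threshold, which forces a further case split inside $\SPC$ on whether $\vnorm{\pp}\geq 6\sigma(1/2-\rho)/L_2$: in the large-$\vnorm{\pp}$ subcase it invokes the curvature test $\dotprod{\pp,\HH\pp}>\sigma\vnorm{\pp}^2$ to get a gradient-independent decrease $\rho\sigma(6\sigma(1/2-\rho)/L_2)^2$, and only in the unit-step subcase does it call on \cref{lemma:scaling lower bounds}. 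Your cancellation observation---that without the premature bound on $s$ the acceptance condition reads $(\alpha s)^2\vnorm{\bgg}\leq 6(1/2-\rho)/L_2$ (with $1-\rho$ in the $\NC$ case), so $s_k\bar\alpha_k$ is independent of the scaling---checks out, and it unifies all three regimes into the single dichotomy ``unit step accepted (use the scaling lower bounds: $\sigma/L_1^2$ for $\SPC$ via \cref{lemma:scaling lower bounds}, $\sLPC_{\tmin}$, $\sNC_{\tmin}$) versus tracked step (use $s_k\alpha_k\geq\theta\,s_k\bar\alpha_k$),'' yielding decreases whose $\vnorm{\bgg}^{3/2}$-constants are free of $\sigma$ and $\sNC_{\tmax}$. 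Your version is also slightly more careful than the paper on one point: the paper's lemma asserts the accepted step is at least the threshold, implicitly idealizing the line search as returning the largest acceptable step, whereas a backtracking routine only guarantees a factor $\theta$ of it---the contraction factor $\tau$ you carry makes this rigorous at no cost to the rate. The $\NC$ obstacle you flag resolves immediately: \cref{alg:forward tracking line search} only forward-tracks when $\alpha=1$ already satisfies \cref{eqn:armijo condition}, so it returns $\alpha_k\geq 1$ and $s_k\alpha_k\geq\sNC_{\tmin}$; if $\alpha=1$ fails it reverts to backtracking and your $\theta\bar\alpha_k$ bound applies. Both routes give the same $\bigO{\eg^{-2}}$ complexity; what the paper's decomposition buys instead is the explicit gradient-independent decrease in the large-$\SPC$ subcase, which it highlights in a remark as exceeding worst-case behavior.
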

The detailed complexity bound in \cref{prop:global convergence}, including all underlying constants, is provided in \cref{apx:proof-global-convergence}.
Unsurprisingly, since the search direction is entirely determined by the gradient, the rate in \cref{prop:global convergence} matches that of gradient descent under the Lipschitz gradient smoothness condition with a line search or fixed step size \cite{cartisEvaluationComplexityAlgorithms2022,nesterovIntroductoryLecturesConvex2004}. Furthermore, since our method coincides with the Newton's method when $d=1$, it is not surprising that the convergence rate also matches that of Newton's method in the nonconvex setting with the stronger Lipschitz gradient and Hessian smoothness conditions \citep{cartisEvaluationComplexityAlgorithms2022}.
The novelty of \cref{prop:global convergence} lies in achieving this complexity under the alterative smoothness conditions, i.e.,  \cref{ass:Hessian gradient smoothness condition,ass:Hessian smoothness condition}.

\subsection{Local Convergence}
\label{sec:local}
Next we consider the local convergence properties of our method. Improved local convergence rates are often considered to be the primary appeal of second order method \citep{nocedalNumericalOptimization2006,roosta2019subsampledNewton}.
Let $\xx^{\star}$ be a local minima satisfying the second-order sufficient condition $\bgg(\xx^{\star}) = 0$ and $\HH(\xx^{\star}) \succ 0$. 
By the continuity of the Hessian, there exists a ball of radius $r$ around $\xx^{\star}$, denoted by $\sB_r^\star$, such that  
\begin{align} 
\label{eqn:second-order sufficient conditions}
    0 \hspace{-0.5mm} < \hspace{-0.5mm} \mu \hspace{-0.5mm}\defeq \hspace{-0.5mm}\min_{\xx \in \sB_r^\star} \lambda_{\tmin} (\HH) \hspace{-0.5mm} \leq \hspace{-0.5mm} \max_{\xx \in \sB_r^\star} \lambda_{\tmax}(\HH) \hspace{-0.5mm} \defeq \hspace{-0.5mm} M \hspace{-0.5mm}< \hspace{-0.5mm} \infty. 
\end{align}
Our next result demonstrates that, near $\xx^\star$, the unit step size is acceptable to line search for all iterations, leading to a linear decrease in the sub-optimality of the objective value.
\begin{theorem} \label{thm:local convergence}
    Consider \cref{ass:Hessian smoothness condition} and suppose $\xx^{\star}$ is a local minimum satisfying the second order sufficient conditions. If $\xx_0$ is sufficiently close to $\xx^{\star}$, then for all iteration of \cref{alg:scaled gradient}, the unit step size $\alphak=1$ satisfies the Armijo condition \cref {eqn:armijo condition} and  we have 
    \begin{align*}
        f(\xx_{k+1}) - f(\xx^{\star}) \leq (1-\tau)(f(\xx_k) - f(\xx^{\star})),
    \end{align*}
    where $\tau \defeq 2 \rho \mu \max\{1/M, \sLPC_{\tmin}\} \in (0, 1]$ if $\sigma \geq \mu$, and $\tau \defeq 2\rho \mu/M \in (0, 1]$ otherwise.
\end{theorem}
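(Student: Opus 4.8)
The plan is to localize the entire analysis to the ball $\sB_r^\star$ from \cref{eqn:second-order sufficient conditions}, where the Hessian is uniformly positive definite with spectrum in $[\mu, M]$, and to run an induction showing the iterates never leave a sublevel-set neighborhood of $\xx^\star$ contained in $\sB_r^\star$. First I would record the local consequences of \cref{eqn:second-order sufficient conditions}: for $\xx \in \sB_r^\star$ with $\bgg \neq 0$ the curvature along the gradient obeys $0 < \mu\vnorm{\bgg}^2 \leq \gHg \leq M\vnorm{\bgg}^2$, so the $\NC$ branch of \cref{alg:scaling selection} is never triggered and we are always in the $\SPC$ or $\LPC$ case. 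I would also record the Polyak--\L{}ojasiewicz inequality $\vnorm{\bgg(\xx)}^2 \geq 2\mu(f(\xx) - f(\xx^\star))$, which follows from the local strong convexity implied by $\lambda_{\tmin}(\HH) \geq \mu$ on $\sB_r^\star$.

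Next I would handle acceptance of the unit step. Since $\bgg(\xx^\star) = 0$ and $\bgg$ is continuous, choosing $\xx_0$ close enough to $\xx^\star$ forces $\vnorm{\bgg(\xx_0)}$ below the threshold in \cref{eqn:gradient line search termination}; as no $\NC$ step occurs in $\sB_r^\star$, only the $\SPC/\LPC$ part $6\sigma^2(1/2-\rho)/L_2$ of that threshold is relevant. \cref{thm:unit step size acceptance} then guarantees $\alpha_0 = 1$ satisfies \cref{eqn:armijo condition}. The induction step is the crux: assuming $\xx_k$ lies in the chosen neighborhood with $\alpha_k = 1$ accepted, the Armijo inequality with $\pp_k = -s_k\bgg_k$ gives the strict descent $f(\xx_{k+1}) \leq f(\xx_k) - \rho s_k \vnorm{\bgg_k}^2 < f(\xx_k)$, so $f$ is monotonically nonincreasing. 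I would use this together with the spectral bounds to confine $\xx_{k+1}$ (and the segment $[\xx_k,\xx_{k+1}]$) to $\sB_r^\star$: the step length $\vnorm{s_k\bgg_k}$ is small because $s_k \leq 1/\sigma$ by \cref{prop:scaling upper bounds} and $\vnorm{\bgg_k}$ is small, while monotone decrease of $f$ keeps the iterate in a sublevel set that, by local strong convexity, lies inside the ball. This keeps $\vnorm{\bgg_{k+1}}$ below the threshold, so $\alpha_{k+1}=1$ is again accepted, closing the induction.

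With the unit step accepted at every iteration and all iterates in $\sB_r^\star$, I would convert the per-step Armijo decrease into the linear rate. Combining $f(\xx_{k+1}) - f(\xx^\star) \leq (f(\xx_k) - f(\xx^\star)) - \rho s_k \vnorm{\bgg_k}^2$ with the \L{}ojasiewicz inequality yields $f(\xx_{k+1}) - f(\xx^\star) \leq (1 - 2\rho\mu s_k)(f(\xx_k) - f(\xx^\star))$, so the contraction is governed by a lower bound on $s_k$. Here the case split enters: in the $\SPC$ case every admissible scaling satisfies $s_k \geq 1/M$ (from $\gHg \leq M\vnorm{\bgg}^2$ for $\sCG$, and $\dotprod{\bgg, \HH^2\bgg} \leq M\gHg$ together with $\sMR \leq \sGM \leq \sCG$, cf. \cref{lemma:scaling lower bounds}), whereas in the $\LPC$ case $s_k \geq \sLPC_{\tmin}$ by construction. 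When $\sigma < \mu$ only the $\SPC$ branch can occur in $\sB_r^\star$, giving the clean factor $\tau = 2\rho\mu/M$; when $\sigma \geq \mu$ both branches are possible and I would combine the two per-case lower bounds on $s_k$ to obtain the stated $\tau$, finally checking $\tau \in (0,1]$ via $s_k \leq 1/\sigma$ and $\rho < 1/2$. I expect the main obstacle to be precisely this invariance argument in the induction step: rigorously proving the iterates and connecting segments never escape $\sB_r^\star$, since both unit-step acceptance and the spectral/\L{}ojasiewicz bounds downstream rely on remaining in the region where \cref{eqn:second-order sufficient conditions} holds.
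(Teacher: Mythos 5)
Your proposal is correct and follows essentially the same route as the paper's proof: the paper likewise combines the local spectral bounds and PL inequality (its \cref{lem: mu and M local}) with \cref{thm:unit step size acceptance} to get the recursion $f(\xx_{k+1}) - f(\xx^{\star}) \leq (1 - 2\mu\rho s_k)(f(\xx_k) - f(\xx^{\star}))$, lower-bounds $s_k$ by $1/M$ in the $\SPC$ case (using \cref{lemma:tighter MR lower bound} for MR, matching your $\dotprod{\bgg,\HH^2\bgg} \leq M\gHg$ argument) and by $\sLPC_{\tmin}$ in the $\LPC$ case, and closes the induction with exactly the invariance argument you anticipate, confining iterates to $\sB_{r'}^\star$ intersected with an explicit sublevel set via $s_k \leq 1/\sigma$ and the bound $\vnorm{\xx_{k+1}-\xx^\star} \leq (1+M/\sigma)\vnorm{\xx_k - \xx^\star}$.
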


\begin{remark}
    Suppose $\xx\in \sB_r^\star$ where $\sB_r^\star$ is as in \cref{eqn:second-order sufficient conditions}. Since $\dotprod{ \bgg, \HH\bgg} \leq  M \vnorm{\bgg}^2$, the $\SPC$ case only arises on $\sB_r^\star$ if $\sigma$ is chosen such that $\sigma \leq M$; otherwise \cref{alg:scaling selection} always returns the $\LPC$ scaling. In this case, since $\sLPC_{\tmin} < 1/\sigma < 1/M$, and $\rho<1/2$ we always have $0 < \tau \leq 1$ in \cref{thm:local convergence}.
\end{remark}
\cref{thm:local convergence} suggests that the rate of convergence for scaled gradient descent has a similar dependence on the local condition number $\kappa \defeq M/\mu$ as that of the typical gradient descent. However, as the proof of \cref{thm:local convergence} reveals, this worst-case analysis overlooks the potential for scaled gradient methods to exploit larger scalings by adapting to local geometry. For instance, if $\sigma < \mu$ then $\sSPC \leq 1/\mu$. Therefore, in ``best-case'' iterations where large scalings (close to $1/\mu$) pass the line search with unit step size, the linear rate can be as small as $1 - 2\rho$ (cf. \cref{eqn:local function sub-optimality recursion}). This rate eliminates dependence on the local condition number, resembling the problem-independent convergence rates of many Newton-type methods \cite{roosta2019subsampledNewton, roostaNewtonMRInexactNewton2022}. This reflected in the practical performance of our algorithm. In particular, in \cref{sec:numerical results}, we demonstrate numerically that scaled gradient methods often produce large scalings that pass the line search with unit step size, leading to rapid convergence. Conversely, if $\sigma \gg \mu$, both $\LPC$ and $\SPC$ scalings are upper bounded by $1/\sigma \ll 1/\mu$, limiting the local rate.

Our next result states that the MR scaling \cref{eqn:MR scaling defn} can give rise to linear convergence in the gradient norm. 
Intuitively, this result arises because the MR scaling minimizes the norm of the residual of the Newton system associated with the scaled gradient, which can also be viewed as the norm of the linearized gradient.

\begin{theorem} \label{thm:local convergence MR second-order sufficient}
    Consider \cref{ass:Hessian smoothness condition} and suppose $\xx^{\star}$ is a local minimum satisfying the second order sufficient conditions. If $\xx_0$ is sufficiently close to $\xx^{\star}$, then the iterations of the form $\xx_{k+1} = \xx_k - \sMR_{k} \bgg_k$ converge linearly in the gradient norm.
\end{theorem}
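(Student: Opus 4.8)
The plan is to track the gradient norm directly and exploit the defining least-squares property of $\sMR_k$. The key observation is that one MR step sends $\bggkk$ close to the \emph{linearized} residual $\bggk - \sMR_k\HHk\bggk$, whose norm is exactly the quantity $\sMR_k$ minimizes. I would begin from the integral form of the fundamental theorem of calculus for $\bgg$ along the segment $\xxk - t\sMR_k\bggk$, $t\in[0,1]$, to split
\begin{align*}
  \bggkk = \left(\bggk - \sMR_k \HHk \bggk\right) - \sMR_k \int_0^1 \left[\HH(\xxk - t\sMR_k\bggk) - \HHk\right]\bggk \,\df t .
\end{align*}
Substituting $\tau = t\sMR_k \ge 0$ into \cref{ass:Hessian smoothness condition} bounds the integrand by $\tau L_2\vnorm{\bggk}$, producing a quadratic Taylor remainder of size $\tfrac{L_2}{2}(\sMR_k)^2\vnorm{\bggk}^2$.

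The heart of the argument is to contract the linear residual term. Since $\sMR_k$ minimizes $s\mapsto \vnorm{\bggk - s\HHk\bggk}$, the vector $\sMR_k\HHk\bggk$ is the orthogonal projection of $\bggk$ onto $\Span(\HHk\bggk)$, so
\begin{align*}
  \vnorm{\bggk - \sMR_k\HHk\bggk}^2 = \vnorm{\bggk}^2\left(1 - \frac{\dotprod{\bggk, \HHk\bggk}^2}{\vnorm{\bggk}^2\,\vnorm{\HHk\bggk}^2}\right),
\end{align*}
the bracketed ratio being $\cos^2$ of the angle between $\bggk$ and $\HHk\bggk$. Restricting to the ball $\sB_r^\star$ of \cref{eqn:second-order sufficient conditions}, where $\HHk$ has spectrum in $[\mu,M]$, a Kantorovich-type inequality (optimizing over a two-point spectral distribution) gives the uniform bound $\cos^2 \ge 4\mu M/(\mu+M)^2$, hence $\vnorm{\bggk - \sMR_k\HHk\bggk} \le \tfrac{M-\mu}{M+\mu}\vnorm{\bggk}$, the classical steepest-descent factor. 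Together with $\sMR_k \le 1/\mu$ on $\sB_r^\star$ (from $\HHk^{-1}\preceq \mu^{-1}\eye$, in the spirit of \cref{prop:scaling upper bounds}), the triangle inequality yields
\begin{align*}
  \vnorm{\bggkk} \le \left(\frac{M-\mu}{M+\mu} + \frac{L_2}{2\mu^2}\vnorm{\bggk}\right)\vnorm{\bggk}.
\end{align*}

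The final step is a region-of-attraction induction. Choosing $\xxo$ so that $\vnorm{\bggk}$ is small enough that the quadratic term contributes at most $\mu/(M+\mu)$ forces a contraction factor of at most $M/(M+\mu)<1$, giving Q-linear decay of $\vnorm{\bggk}$. I would close the induction by bounding the total displacement $\sum_j\vnorm{\xx_{j+1}-\xxj}\le \mu^{-1}\sum_j\vnorm{\bgg_j}$ through the resulting geometric series, which keeps every iterate in $\sB_r^\star$ (so the spectral bounds persist) and $\vnorm{\bggk}$ below the required threshold; here $\vnorm{\bgg_0}\le M\vnorm{\xxo - \xxs}$ converts closeness of $\xxo$ into smallness of the initial gradient. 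The main obstacle is precisely this simultaneous bookkeeping: the contraction factor and the bounds $\mu,M$ are only valid while iterates remain inside $\sB_r^\star$, so invariance of the ball and persistence of gradient-smallness must be established in one induction. The uniform angle (Kantorovich) bound is where positive definiteness is essential and is the technical crux that produces a genuinely \emph{linear} rate rather than a merely quadratic or degenerate one.
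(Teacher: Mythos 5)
Your proposal is correct, and its skeleton is the same as the paper's: the paper's \cref{lemma:gradient norm recursion} performs exactly your decomposition of $\bgg_{k+1}$ into the linearized residual $\bgg_k - \sMR_k\HH_k\bgg_k$ plus an $\tfrac{L_2}{2}\vnorm{\pp_k}^2$ Taylor remainder, uses the same least-squares identity $\vnorm{\bgg_k - \sMR_k\HH_k\bgg_k} = \sqrt{1-\cos^2\theta_k}\,\vnorm{\bgg_k}$, and closes with the same linear-quadratic recursion plus a ball-invariance induction. The differences are worth noting. First, your angle bound is sharper: the paper bounds $\cos\theta_k \geq \mu/M$ directly from $\dotprod{\bgg,\HH\bgg} \geq \mu\vnorm{\bgg}^2$ and $\vnorm{\HH\bgg}\leq M\vnorm{\bgg}$ (i.e., $\nu_0 = \mu^2/M^2$ in \cref{ass:MR local assumption}), whereas your Kantorovich argument gives $\cos^2\theta_k \geq 4\mu M/(\mu+M)^2$ and hence the classical contraction $\tfrac{M-\mu}{M+\mu}$, which is strictly better (e.g., $\mu=1,M=2$ gives $1/3$ versus $\sqrt{3}/2$); your two-point spectral optimization is valid since \cref{eqn:second-order sufficient conditions} provides the full spectral bounds on $\sB_r^\star$. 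Second, the paper buys generality that your direct specialization forgoes: it isolates the recursion in \cref{prop:gradient norm recursion} under the weaker \cref{ass:MR local assumption}, whose absolute-value curvature condition $\mu_0\vnorm{\bgg}^2 \leq \abs{\dotprod{\bgg,\HH\bgg}}$ and cosine condition cover settings well beyond second-order sufficiency (e.g., curvature bounded away from zero but possibly negative, or $f(\xx)=\vnorm{\xx}^3/6$ where $\cos^2\theta = 1$ despite degeneracy at the minimizer); the theorem then follows as a corollary with $\mu_0=\mu$, $\nu_0=\mu^2/M^2$. Third, your trapping bookkeeping via the summed-displacement geometric series $\sum_j \vnorm{\xx_{j+1}-\xx_j} \leq \mu^{-1}\sum_j\vnorm{\bgg_j}$ works, but the paper's one-step containment is lighter: the two-sided equivalence $\mu\vnorm{\xx-\xx^\star} \leq \vnorm{\bgg(\xx)} \leq M\vnorm{\xx-\xx^\star}$ of \cref{eqn:gradient local bound} converts monotonicity of $\vnorm{\bgg_k}$ (plus $\sMR_k \leq 1/\mu$) into $\xx_{k+1}\in\sB_r^\star$ directly, with no series needed. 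Your threshold argument (forcing the quadratic term below $\mu/(M+\mu)$, factor $M/(M+\mu)$) yields Q-linear decay, while the paper runs a Nesterov-style recursion to an explicit R-linear rate; both establish the claimed linear convergence.
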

Our proof of this result (\cref{apx:convergence in gradient norm}), extends beyond second-order sufficient conditions to a more general setting.
An immediate implication of \cref{thm:local convergence MR second-order sufficient} is that the gradient norm serves as a ``secondary objective'' for the MR scaling. This is notable given recent work on gradient norm regularization, which biases gradient descent toward regions with small gradient norms, often linked to ``flat'' regions and better generalization in machine learning \cite{barrettImplicitGradientRegularization2022,smithOriginImplicitRegularization2021,zhaoPenalizingGradientNorm2022,karakidaUnderstandingGradientRegularization2023,hochreiterFlatMinima1997,keskarLargeBatchTrainingDeep2017}. Unlike explicit regularization, which requires additional hyperparameter tuning, MR scaling implicitly achieves this bias without additional considerations.

\section{Numerical Results} \label{sec:numerical results}

We now proceed to validate our results by comparing our scaled gradient method against multiple variants of GD including fixed step size, Nesterov acceleration \citep{nesterovIntroductoryLecturesConvex2004}, heavy ball momentum \citep{wrightOptimizationDataAnalysis2022}, and Adam \citep{kingma2014adam}. In addition, we compare against hyperparameter free methods such as line search GD, with a number of step size resetting techniques inspired by \citep{vaswaniPainlessStochasticGradient2021}, as well as a deterministic variant of Polyak non-monotone (PoNo) Line Search \citep{galliDontBeMonotone2023} (see \cref{apx:competitor algorithms} for details). For visual clarity we report the best performing resetting scheme as ``line search'' and defer PoNo line search comparison to \cref{apx:additional-numerical-results}.

For our scaling method, we consider the CG, MR and GM scalings. However, in the quadratic case, it is well known that steepest descent and minimal gradient (our CG and MR resp.) step sizes are prone to a ``zig-zag effect'' \citep{nocedalNumericalOptimization2006,daiAlternateMinimizationGradient2003}. \citet{daiAlternateMinimizationGradient2003} theoretically and empirically demonstrate that this zig-zag effect can be mitigated by alternating between steepest descent and minimal gradient steps. For this reason, we also consider alternating scalings on each $\SPC$ iteration\footnote{If an $\LPC/\NC$ arises, we proceed based on the last $\SPC$ step.}. Alternating scalings are denoted by a concatenated string of the two scalings, in order, e.g. ``MRCG''.

In the main body, we report the best performing scaling, relegating performance comparison between scalings to \cref{apx:additional-numerical-results}.
Where available, we apply theoretically convergent hyperparameter values for our competitor methods, otherwise we manually tune hyperparameter settings.
We note that tuning incurs a significant—and often overlooked—cost. For example, tuning Adam on our logistic regression CIFAR10 task took roughly 20 times longer than a CGMR run. 

To fairly compare methods with different per-iteration costs, we plot objective values against the number of \textit{oracle calls}—equivalent function evaluations (see \cref{apx:numerical-results-oracle-calls} for details). For completeness, wall-clock time results are provided in \cref{apx:additional-numerical-results}. The setup for each problem and optimizer is detailed there as well. All methods are implemented in PyTorch \citep{pytorch} and run on various cluster GPUs (e.g., H100, A100). A link to our code is available in \cref{apx:additional-numerical-results}.

\paragraph{Multi-class Logistic Regression.}

\begin{figure*}[ht]
    \centering
    \includegraphics[width=0.8\textwidth]{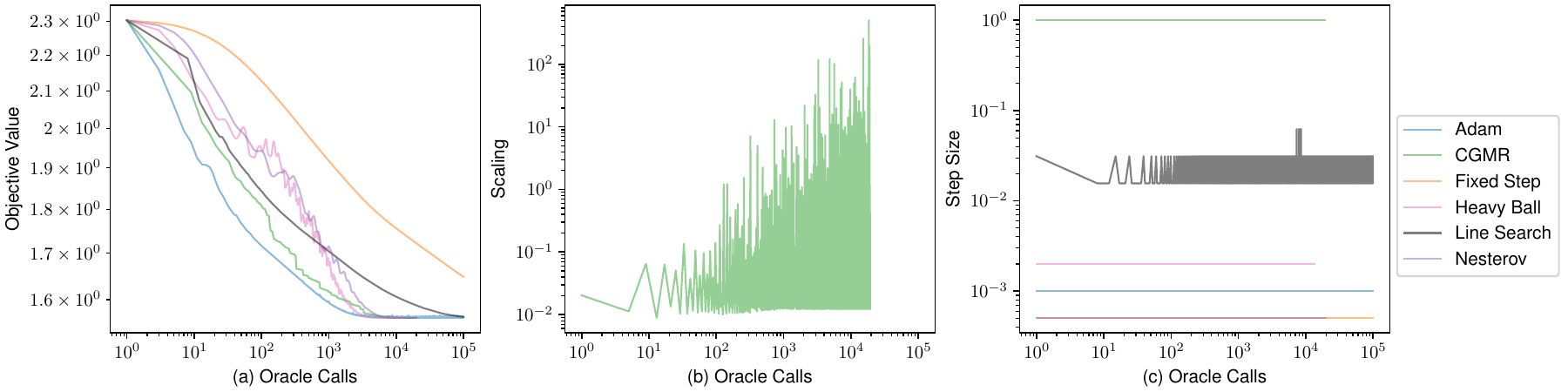}
    \caption{Multi-class logistic regression on CIFAR10. (a) Objective value. (b) Scaling utilized by the CGMR method. (c) Step size. }
    \label{fig:logistic_regression_cifar10}
\end{figure*}

In \cref{fig:logistic_regression_cifar10}, we present multi-class logistic regression with $\ell_2$ regularization on the CIFAR10 dataset \citep{Krizhevsky2009CIFAR10}. This $\mu$-strongly convex problem provides a well-behaved, yet challenging, baseline for comparison. 
We observe that ``CGMR'' is competitive with Adam and accelerated methods, while significantly outperforming line search (including PoNo) and fixed step size approaches. This is achieved without requiring any tuning or prior knowledge of problem constants. Notably, the scalings produced by our method oscillate between large (close to $1/\mu$) and small values throughout the optimization trajectory\footnote{As shown in \cref{apx:numerical-results-logistic}, this ``large scaling'' effect is most pronounced for alternating scaling.}. 
Despite the magnitude of these scaling values, the line search accepts the unit step length at \textit{all iterations}. In light of \cref{thm:local convergence}, we believe these large scaling values may contribute to the rapid convergence of our method.

\paragraph{Multilayer Perceptron (MLP).} 
In \cref{fig:mlp_fashionMNIST}, we examine a nonconvex, small MLP on the FashionMNIST dataset \citep{xiao2017fashionmnist}. The results show that ``CGMR'' scaled gradient is again competitive with Adam and significantly outperforms all other methods, all without requiring hyperparameter tuning. 
Notably, the unit step size is accepted by the line search at almost every $\SPC$ iteration. When $\NC$ is detected it is exploited using forward tracking search. Meanwhile, $\LPC$ does not occur at any point during the optimization trajectory. The scaling chosen by CGMR oscillates between large and small values, mirroring the behavior observed in the convex logistic regression experiment.

\begin{figure*}[ht]
    \centering
    \includegraphics[width=0.8\textwidth]{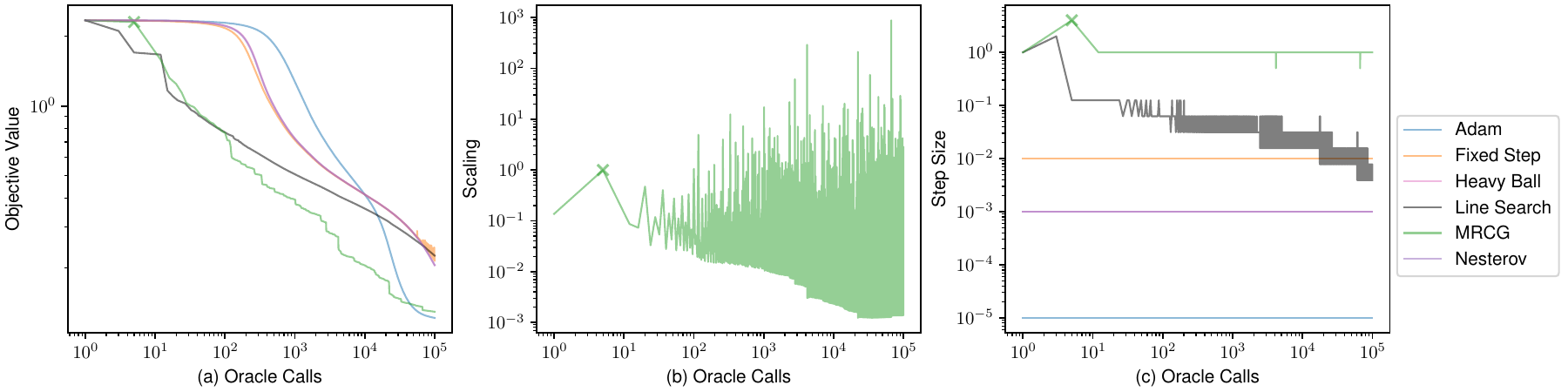}
    \caption{MLP on the FashionMNIST. (a) Objective value (b) Scaling utilized by the CGMR method (c) Step size. Crosses indicate iterations where negative curvature is detected.}
    \label{fig:mlp_fashionMNIST}
\end{figure*}

\paragraph{ResNet.} 

In \cref{fig:resnet_Imagenette}, we examine an over-parameterized, nonconvex ResNet18 architecture \citep{Kaiming2016ResNet} on the Imagenette dataset \citep{Howard2019Imagenette}. The results show that the unit step size is accepted at each iteration for our scaled gradient method, with no $\NC$ or $\LPC$ directions detected during the iterations.
Our method significantly outperforms the only other hyperparameter-free methods (line search and PoNo) but is surpassed by hyperparameter-tuned approaches. In contrast to scaled gradient, the tuned methods exhibit notable instability in the early iterations, followed by stabilization and convergence. This ``unstable convergence'' effect has been observed to be beneficial in many large-scale, nonconvex models \citep{cohenGradientDescentNeural2022,ahnUnderstandingUnstableConvergence2022}.
Given this, it appears that for scaled gradient to be competitive on large-scale problems, some degree of non-monotonicity must be introduced into the iterations. \citet{rouletSteppingEdgeCurvature2024} demonstrate that by using CG-style scaling with large step sizes (e.g., $\alpha \geq 2$), unstable convergence can be induced in a principled manner. A similar approach could be applied to our methods to design \emph{principled} step size schedules, such as annealing from $\alpha \geq 2$ (unstable) to $\alpha = 1$ (stable), to leverage the benefits of the unstable regime. We leave this exploration for future work. Finally, we note that the MR scaling uniformly decreases the gradient with the unit step size across our examples, consistent with \cref{thm:local convergence MR second-order sufficient}; see \cref{apx:additional-numerical-results} .

\begin{figure}[ht]
    \centering
    \includegraphics[width=0.8\linewidth]{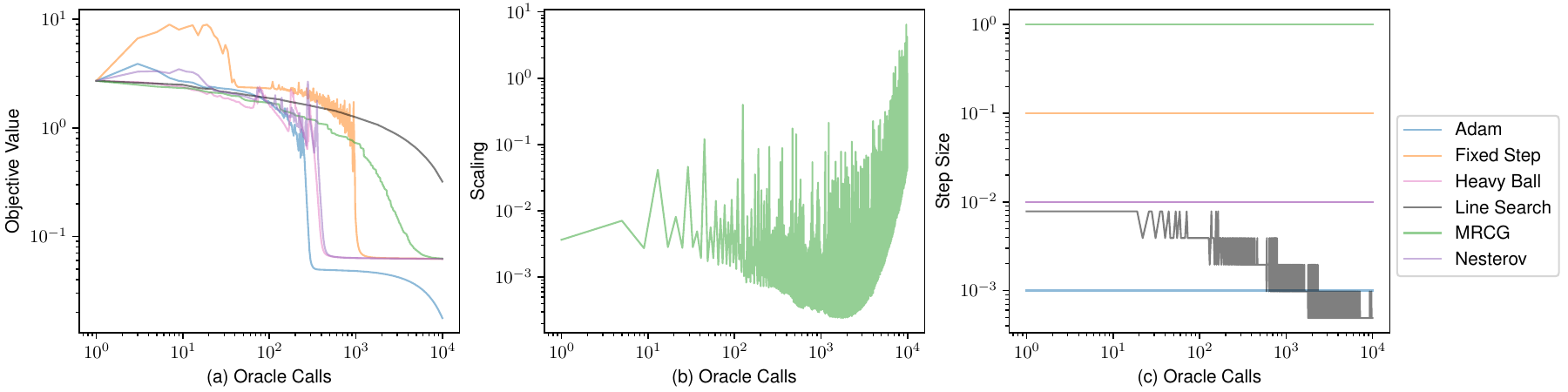}
    \caption{ResNet18 on Imagenette. (a) Objective value. (b) Scaling utilized by the MRCG method. (c) Step size. Crosses indicate iterations where negative curvature is detected.}
    \label{fig:resnet_Imagenette}
\end{figure}

\section{Conclusions} \label{sec:conclusion}

In this work we developed a framework based on (weakened) Lipschitz Hessian smoothness for analyzing Hessian-aware scaling of  gradient directions. Under this framework, we show that Hessian curvature information can be used to enhance vanilla GD with a unit step size guarantee. We show this guarantee holds for all iterations near minima satisfying certain regularity conditions. Furthermore, under alternative smoothness conditions, we prove global convergence. 
Numerically, we observe that the unit step size guarantee holds across most of the optimization trajectory. We also demonstrate that our method outperforms other hyperparameter free algorithms.

\noindent
\textbf{Limitations and Future Directions.} 
A limitation of our theory is the local nature of our unit step size guarantee. In particular, \cref{thm:unit step size acceptance} suggests that the guarantee may only hold in a region with a prohibitive dependence on certain problem constants. Fortunately, our numerical results demonstrate that the unit step size guarantee is widely applicable in practice. A natural future direction of work is obtaining theory that more closely tracks the practical performance of our algorithm. This can perhaps be achieved by considering analysis under an alternative globalization framework or with an alternative potential function. Another limitation  is performance compared with hand tuned alternatives. Extensions like Hessian aware step size schedules, scaling for adaptive gradient methods, and stochastic variations are promising candidates for addressing this limitation.

\section*{Acknowledgments}

This research was partially supported by the Australian Research Council through an Industrial Transformation Training Centre for Information Resilience (IC200100022).

\bibliography{bibliography}
\bibliographystyle{apalike}

\newpage 


\newpage
\appendix

\section{Additional Material for \cref{sec:approach}} \label{apx:proof-basic-properties}

\subsection{Discussion of \cref{alg:scaling selection} Hyperparameters} \label{apx:scaled gradient parameters discussion}

We now consider how to set the hyperparameters of \cref{alg:scaling selection}, beginning with $\sigma$. Recall that in \cref{prop:scaling upper bounds}, the constant $1/\sigma$ bounds the step size in both the $\SPC$ and $\LPC$ cases. 
Furthermore, in \cref{thm:local convergence} we show that better local rates can be attained when $\sigma$ is smaller than the local strong convexity constant. Together, these results suggest that $\sigma$ should generally be chosen small ($\sigma \ll 1$). This has the additional effect of maximizing the number of $\SPC$ steps, which, due to the second order scaling, tend to the be most effective in practice. Despite the dependence on $\sigma$ suggested by our local unit step size guarantee (\cref{thm:local convergence}), our numerical experience suggests that our algorithm widely utilizes the unit step when $\sigma$ is small or even zero.

Taking $\sigma$ to be set small, $\LPC$ steps are encountered rarely in practice. Therefore $\sLPC_{\tmin} = \sLPC_{\tmax} =\sLPC=1/\sigma$ is a reasonable choice, even if backtracking is necessary from this large initial step size. Practical experience also suggests that $\NC$ directions are relatively uncommon (see \cref{sec:numerical results}). Moreover, significant progress can be expected when exploiting negative curvature directions with large step sizes \cite{gouldExploitingNegativeCurvature2000,curtisExploitingNegativeCurvature2019,liuNewtonMRAlgorithmComplexity2023}. For these reasons, instead of tuning the $\NC$ scaling, we suggest setting $\sNC_{\tmin} = \sNC_{\tmax}=1$ and exploiting the $\NC$ directions with a forward/back-tracking line search. In the case where the problem is $\mu$-strongly convex, we can take $\sigma=0$ and disregard $\sLPC$ and $\sNC$, since curvature along the gradient direction will always be bounded below by $\mu$. This also implies that, in the strongly convex case, $\mu$ can take the place of $\sigma$ in our analysis.

\subsection{Proofs}
\paragraph{Proof of \cref{prop:scaling upper bounds}}
\begin{proof}
    In the $\LPC$ and $\NC$ cases, the results follow from the definition of $\sLPC$ and $\sNC$ in \cref{alg:scaling selection}. The $\SPC$ condition excludes the case where $\bgg = 0$, furthermore, it is clear that $\dotprod{\bgg, \HH \bgg} > 0$ implies $\HH\bgg \neq 0$, and hence all of the scalings are well defined. The Cauchy-Schwarz inequality implies $\gHg \leq \vnorm{\bgg} \vnorm{\HH\bgg}$ and $\vnorm{\HH\bgg} \geq \dotprod{\bgg, \HH \bgg}/\vnorm{\bgg}$. Applying these two inequalities subsequently to $\sMR$ we have 
    \begin{align*}
        0 < \sMR = \frac{\gHg}{\vnorm{\HH \bgg}^2} \leq \frac{\vnorm{\bgg}}{ \vnorm{\HH \bgg }} \leq \frac{\vnorm{\bgg}^2}{\gHg},
    \end{align*}
    which captures the three left inequalities in this case. The right-most inequality follows from $\dotprod{\bgg, \HH \bgg } > \sigma \vnorm{\bgg}^2$ and the definition of \cref{eqn:CG scaling defn}. 
\end{proof}

\paragraph{Proof of \cref{prop:second-order descent}}

\begin{proof}
    The first-order descent trivially holds. 
    In the $\NC$ case, the result immediately follows from $\dotprod{\pp, \HH \pp} < 0$ and the first-order descent property. In the $\LPC$ case, 
    \begin{align*}
        \dotprod{\bgg, \pp} + \dotprod{\pp, \HH \pp} &= -\sLPC \vnorm{\bgg}^2 + (\sLPC)^2\dotprod{\bgg, \HH \bgg} \\
        &\leq -\sLPC \vnorm{\bgg}^2 +  (\sLPC)^2 \sigma \vnorm{\bgg}^2 \\
        &\leq -\sLPC \vnorm{\bgg}^2 + \sLPC \vnorm{\bgg}^2 = 0,
    \end{align*}
    where the second to last line follows from the $\LPC$ condition and the last line follows from $\sLPC \leq 1/\sigma$ in \cref{prop:scaling upper bounds}.  

    For the $\SPC$ case, we consider each scaling successively. For the CG scaling, we have 
    \begin{align*}
        \dotprod{ \pCG, \bgg} + \dotprod{\pCG, \HH \pCG} &= -\sCG \vnorm{\bgg}^2 + (\sCG)^2 \dotprod{\bgg, \HH \bgg} \\
        &= -\frac{\vnorm{ \bgg}^4}{\dotprod{\bgg, \HH \bgg}} + \frac{\vnorm{ \bgg}^4}{(\dotprod{\bgg, \HH \bgg})^2} \dotprod{\bgg, \HH \bgg} \\
        &= 0.
    \end{align*}
    For the MR scaling, we have
    \begin{align*}
        \dotprod{\pMR, \bgg} + \dotprod{\pMR, \HH \pMR} 
        &= -\sMR \vnorm{\bgg}^2 + (\sMR)^2 \dotprod{\bgg, \HH \bgg} \\
        &= -\frac{\langle \bgg, \HH \bgg \rangle }{\| \HH \bgg \|^2} \| \bgg\|^2 + \frac{(\dotprod{\bgg, \HH \bgg} )^2}{ \| \HH \bgg \|^4}\dotprod{\bgg, \HH \bgg} \\
        &\leq -\frac{\dotprod{\bgg, \HH \bgg} }{\| \HH \bgg \|^2} \vnorm{\bgg}^2 + \frac{ \vnorm{\bgg}^2}{ \| \HH \bgg \|^2} \dotprod{\bgg, \HH \bgg} \\
        &= 0,
    \end{align*}
    where the third line follows from the Cauchy-Schwarz inequality. Finally, for the geometric mean scaling, we have
    \begin{align*}
        \dotprod{\pGM, \bgg} + \dotprod{\pGM, \HH \pGM} &= -\sGM \vnorm{\bgg}^2 + (\sGM)^2\dotprod{\bgg, \HH \bgg} \\
        &= -\frac{\vnorm{\bgg}^3}{\vnorm{\HH \bgg}} + \frac{\vnorm{\bgg}^2}{\vnorm{\HH \bgg}^2} \dotprod{\bgg, \HH \bgg} \\
        &\leq  -\frac{\vnorm{\bgg}^3}{\vnorm{\HH \bgg }} + \frac{\vnorm{\bgg}^2}{\vnorm{\HH \bgg}^2} \vnorm{\bgg} \vnorm{\HH \bgg} \\
        &=  -\frac{\vnorm{\bgg}^3}{\vnorm{\HH \bgg }} + \frac{\vnorm{\bgg}^3}{\vnorm{\HH \bgg}} = 0,
    \end{align*}
    where again the third line follows from the Cauchy-Schwarz inequality.

\end{proof}

\paragraph{Proof of \cref{prop:scalar invariance}}
\begin{proof}
     Consider a scalar reparameterization of the original variables $\xx$ as $\yy = \xx/c$, for some $c \neq 0$. Our objective over the new coordinates is $\bar{f}(\yy) = f(c \yy) $ so that
     \begin{align*}
         \grad_\yy \bar{f}(\yy) = c \grad f(\xx), \quad \grad^2_\yy \bar{f}(\yy) = c^2 \grad^2 f(\xx).
     \end{align*}
    Therefore, $\SPC$ scalings computed in the transformed coordinates satisfy
    \begin{align*}
        s(\yy) = \frac{s(\xx)}{c^2}.
    \end{align*}
    For example, for the MR scaling we have 
    \begin{align*}
        s^\text{MR}(\yy) = \frac{\dotprod{ \grad_\yy \bar{f}(\yy), \grad_\yy^2 \bar{f}(\yy) \grad_\yy \bar{f}(\yy)}}{\vnorm{\grad_\yy^2 \bar{f}(\yy) \grad_\yy \bar{f}(\yy)}^2} 
        = \frac{\dotprod{ c \grad f(\xx), (c^2 \grad^2 f(\xx)) c \grad f(\xx)}}{\vnorm{c^2 \grad^2 f(\xx) c \grad f(\xx)}^2} = \frac{1}{c^2} \sMR(\xx).
    \end{align*}
    Now this implies that, for the $\SPC$ scaled gradient directions, the update computed with respect to the new coordinates preserves the reparameterization. Indeed, given $\yy_k = \xx_k/c$, the scaled gradient descent update at $\yy_k$ with some fixed step size $\alpha > 0$ is 
    \begin{align*}
        \yy_{k+1} &= \yy_k - \alpha s(\yy_k)\grad_\yy \bar{f}(\yy_k) = \frac{1}{c}  \left(\xx_k - \alpha s(\xx_k) \grad f(\xx_k)\right)  = \frac{\xx_{k+1}}{c},
    \end{align*}
    where $\xx_{k+1} - \xx_k - \alpha s(\xx_k) \grad f(\xx_k)$ is the update in the original coordinates. That is, the same relationship between $\xx$ and $\yy$ holds at the following iteration.
\end{proof}

\section{Additional Material for \cref{sec:convergence-analysis}}  \label{apx:proof-global-convergence}

\subsection{Line Search Algorithms}
\label{sec:appendix:linesearch_algs}

\begin{algorithm}[htbp]
    \begin{algorithmic}[1]
        \STATE \textbf{input}: Initial step size $\alpha^0$, Scaling parameter $0 < \theta < 1$.
        \vspace{1mm}
        \STATE $\alpha \gets \alpha^0$.
        \vspace{1mm}
        \WHILE{\cref{eqn:armijo condition} is not satisfied}
        \vspace{1mm}
            \STATE $\alpha \gets \theta\alpha$.
            \vspace{1mm}
        \ENDWHILE
        \vspace{1mm}
        \STATE \textbf{return} $\alpha$.
        \vspace{1mm}
    \end{algorithmic}
    \caption{Backward Tracking Line Search.}
    \label{alg:back tracking line search}
\end{algorithm}

\begin{algorithm}[htbp]
    \begin{algorithmic}[1]
        \STATE \textbf{input}: Initial step size $\alpha^0$, Scaling parameter $0 < \theta < 1$.
        \vspace{1mm}
        \STATE $\alpha \gets \alpha^0$.
        \vspace{1mm}
        \IF {\cref{eqn:armijo condition} is not satisfied} 
		\vspace{1mm}
		\STATE \text{Call \cref{alg:back tracking line search}}
		\vspace{1mm}
		\ELSE
		\vspace{1mm}
		\WHILE {\cref{eqn:armijo condition} is satisfied}
		\vspace{1mm}
		\STATE $ \alpha = \alpha/\theta .$
		\vspace{1mm}
		\ENDWHILE
        \vspace{1mm}
		\STATE \textbf{return} $\theta\alpha$.
		\vspace{1mm}
        \ENDIF
    \end{algorithmic}
    \caption{Forward/Backward Tracking Line Search}
    \label{alg:forward tracking line search}
\end{algorithm}

\subsection{Derivation and Proof of \cref{thm:unit step size acceptance}} \label{apx:proof-unit-step-size}

Throughout the following we routinely make use of the facts, due to collinearity of $\bgg$ and $\pp$, that $\dotprod{\pp, \bgg} = - \vnorm{\pp} \vnorm{\bgg}$, $\vnorm{\pp} = s \vnorm{\bgg}$, and the fact that $\dotprod{\bgg, \HH \bgg}$ and $\dotprod{\pp, \HH \pp}$ have the same sign. Letting $\pp = -s \bgg$, $s \geq 0$, \cref{ass:Hessian smoothness condition} and twice continuous differentiability of $f$ imply
\begin{align*}
    \vnorm{\bgg(\xx + \pp) - \bgg (\xx) - \HH(\xx)\pp}  &\leq \frac{L_2}{2}\vnorm{\pp}^2, \tageq\label{eqn:gradient upper bound}\\
    \abs{ f(\xx + \pp) - f(\xx) - \dotprod{\bgg, \pp} - \frac{1}{2} \dotprod{\pp, \HH\pp}} &\leq \frac{L_2}{6} \vnorm{\pp}^3. \tageq\label{eqn:function value upper bound} 
\end{align*}
These bounds can be derived similarly to the general Lipschitz Hessian upper bounds in, e.g., \citet[Lemma 1]{nesterovCubicRegularizationNewton2006}. Our analysis is based on applying the second order descent condition \cref{eqn:second-order descent} to the cubic upper bound in \cref{eqn:function value upper bound}. We consider the non-negative and negative curvature cases separately.

\paragraph{Non-negative Curvature Case.} Suppose the curvature along the gradient direction is non-negative, i.e., $\gHg \geq 0$. In this case, \cref{alg:scaling selection} selects either the $\SPC$ or $\LPC$ scalings.  By \cref{prop:scaling upper bounds,prop:second-order descent}, we have $s \leq 1/\sigma$ and \cref{eqn:second-order descent}. Considering \cref{eqn:function value upper bound} and applying $\alpha \leq 1 $ and \cref{eqn:second-order descent}, we have
\begin{align*}
    f(\xx + \alpha \pp) &\leq f(\xx) + \alpha\dotprod{\bgg, \pp}  + \frac{\alpha^2}{2} \dotprod{\pp , \HH \pp} + \frac{L_2 \alpha^3 }{6} \vnorm{\pp}^3 \\ 
    &\leq f(\xx) + \frac{\alpha}{2}\dotprod{\bgg, \pp}  + \frac{\alpha}{2} \left(\dotprod{\bgg, \pp} + \dotprod{\pp , \HH \pp}\right) + \frac{L_2 \alpha^3 }{6} \vnorm{\pp}^3 \\
    &\leq f(\xx) + \frac{\alpha}{2}\dotprod{\bgg, \pp} + \frac{L_2 \alpha^3 }{6} \vnorm{\pp}^3. 
\end{align*}
Subtracting $f(\xx) + \alpha \rho \dotprod{\pp, \bgg}$ from both sides yields 
\begin{align*}
    f(\xx + \alpha \pp) - f(\xx) - \alpha \rho \dotprod{\pp, \bgg} &\leq \alpha\left(\frac{1}{2} - \rho \right)\dotprod{\pp, \bgg} + \frac{L_2 \alpha^3}{6} \vnorm{\pp}^3 \\
    &= -\alpha \left(\frac{1}{2} - \rho \right)\vnorm{\bgg}\vnorm{\pp} + \frac{L_2 \alpha^3 }{6} s \vnorm{\bgg} \vnorm{\pp}^2 \\
    &= \left( \rho - \frac{1}{2} + \frac{L_2 \alpha^2 s \vnorm{\pp} }{6}  \right) \alpha \vnorm{\pp}\vnorm{\bgg} \\
    &\leq \left( \rho - \frac{1}{2} + \frac{L_2 \alpha^2 \vnorm{\pp} }{6\sigma }  \right) \alpha \vnorm{\pp}\vnorm{\bgg},
\end{align*}
where the last inequality follows from $s \leq 1/\sigma$. This implies \cref{eqn:armijo condition} holds if  
\begin{align}
    \label{eqn:positive curvature line search termination}
    \alpha  \leq \min\left\{1,\sqrt{\frac{6\sigma(1/2 - \rho)}{L_2 \vnorm{\pp}}}\right\}.
\end{align}

\paragraph{Negative Curvature Case.} We now consider the case where $\dotprod{\bgg, \HH \bgg} < 0$, in which case \cref{alg:scaling selection} selects the $\NC$ scaling. As a result, dropping the negative term in \cref{eqn:function value upper bound} gives
\begin{align*}
    f(\xx + \alpha \pp) &\leq f(\xx) + \alpha\dotprod{\pp, \bgg} + \frac{\alpha^2}{2} \dotprod{\pp, \HH \pp} + \frac{L_2 \alpha^3}{6} \vnorm{\pp}^3 \\
    &\leq f(\xx) + \alpha\dotprod{\pp, \bgg} + \frac{L_2 \alpha^3}{6} \vnorm{\pp}^3.
\end{align*}
Similarly to the previous case, subtracting $f(\xx) + \alpha \rho \dotprod{\pp, \bgg}$ yields 
\begin{align*}
    f(\xx + \alpha \pp ) - f(\xx) - \alpha  \rho \dotprod{\pp, \bgg} &\leq \alpha (1 - \rho)\dotprod{\pp, \bgg} + \frac{L_2 \alpha^3}{6} \vnorm{\pp}^3 \\
    &\leq -\alpha (1 - \rho)\vnorm{\bgg}\vnorm{\pp} + \frac{L_2 \alpha^3 s}{6} \vnorm{\bgg}\vnorm{\pp}^2 \\
    &\leq \alpha \vnorm{\pp} \vnorm{\bgg} \left(- (1 - \rho) + \frac{L_2 \alpha^2 (\sNC_{\tmax})}{6} \vnorm{\pp}\right),
\end{align*}
which implies \cref{eqn:armijo condition} if  
\begin{align}    \label{eqn:NC case line search termination}
    \alpha  \leq \sqrt{\frac{6(1-\rho)}{L_2\sNC_{\tmax}\vnorm{\pp}}}, 
\end{align}

This analysis demonstrates the importance of separating the non-negative and negative cases. In particular, in the case of the former, the condition $\alpha \leq 1$ is necessary, while in the latter, the second-order term is omitted entirely, allowing for unrestricted step sizes. This distinction explains why large step sizes (e.g., from forward tracking line search) are feasible in the negative curvature case. 

The bounds in \cref{eqn:positive curvature line search termination,eqn:NC case line search termination} ensure that the line search condition is satisfied by small enough step sizes. While these bounds are typically utilized for proving global convergence, we instead establish a \textit{local unit step size} result for our Hessian-aware scaled gradient descent. Specifically, these bounds imply that, for $\vnorm{\pp}$ sufficiently small, the line search condition \cref{eqn:armijo condition} is satisfied with $\alpha = 1$. Through \cref{prop:scaling upper bounds}, this can be linked to a condition on the gradient, which forms the proof of \cref{thm:unit step size acceptance}.

\begin{proof}[Proof of \cref{thm:unit step size acceptance}]
Suppose $\gHg \geq 0$. Since $s \leq 1/\sigma$, we have $\vnorm{\pp} \leq \vnorm{\bgg}/\sigma$ so that if \cref{eqn:gradient line search termination} holds then
\begin{align*}
    \vnorm{\pp} \leq \frac{\vnorm{\bgg}}{\sigma} \leq \frac{6\sigma(1/2 - \rho)}{L_2},
\end{align*}
which implies \cref{eqn:positive curvature line search termination} holds with $\alpha=1$. When $\gHg < 0$, we have $s \leq \sNC_{\tmax}$, and hence if \cref{eqn:gradient line search termination} then
\begin{align*}
    \vnorm{\pp} \leq \sNC_{\tmax}\vnorm{\bgg} \leq\frac{6(1-\rho)}{L_2\sNC_{\tmax}},
\end{align*}
which implies \cref{eqn:NC case line search termination} with $\alpha=1$.
\end{proof}

\subsection{Proof of \cref{prop:global convergence}}
Let us first restate \cref{prop:global convergence} with all the details and underlying constants.
\begin{proposition}[Restatement of \cref{prop:global convergence}]
    \label{prop:global convergence_detailed}
    Consider \cref{ass:Hessian gradient smoothness condition,ass:Hessian smoothness condition} and suppose $- \infty < f^{\star} = \min_{\xx \in \real^{d}} f(\xx)$. For any $0 < \eg < 1$, after at most  
    \begin{align*}
        K = \left\lceil\frac{f(\xx_0) - f^{\star}}{\min\left\{ \cSPC, \cLPC, \cNC \right\} \eg^2}\right\rceil,
    \end{align*}
    iterations of \cref{alg:scaled gradient}, we have $\vnorm{\bgg_k} \leq \varepsilon_{\bgg}$, where $\cSPC$, $\cLPC$ and $\cNC$ are defined as follows:
    \begin{align*}
        \cSPC &\defeq \sigma\rho\min\left\{ \left(\frac{6\sigma(\tfrac12 - \rho)}{L_2} \right)^2,  \frac{1}{L_1^2} \right\}\\
        \cLPC &\defeq\rho\min\left\{\sLPC_{\tmin}, \sqrt{\frac{6\sigma(\tfrac12 - \rho)\sLPC_{\tmin}}{L_2}} \right\}, \\
        \cNC &\defeq \rho\sqrt{\frac{6(1-\rho) \sNC_{\tmin}}{L_2 \sNC_{\tmax}}}.
    \end{align*}
\end{proposition}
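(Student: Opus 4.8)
The plan is to establish a per-iteration sufficient decrease in the objective value and then sum this decrease across iterations, using the lower boundedness of $f$ to obtain the iteration complexity. I would proceed by cases according to the \texttt{FLAG} returned by \cref{alg:scaling selection}, since the scaling and line search behavior differ in the $\SPC$, $\LPC$, and $\NC$ regimes. In each case the goal is to lower bound the decrease $f(\xxk) - f(\xxkk) \geq -\rho \alphak \dotprod{\ppk, \bggk} = \rho \alphak s_k \vnorm{\bggk}^2$, which holds by the Armijo condition \cref{eqn:armijo condition}, and then to show this is at least a constant times $\vnorm{\bggk}^2$, giving the respective constants $\cSPC$, $\cLPC$, $\cNC$.

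The main work is lower bounding the product $\alphak s_k$ in each case. For the step size $\alphak$, I would invoke the line search termination bounds \cref{eqn:positive curvature line search termination} and \cref{eqn:NC case line search termination} already derived in the proof of \cref{thm:unit step size acceptance}: these guarantee that backtracking terminates with $\alphak$ no smaller than $\theta$ times the threshold, so $\alphak$ is bounded below in terms of $1/\sqrt{\vnorm{\ppk}}$. In the $\SPC$ case, the key subtlety is that the CG-type scaling $\sSPC$ is not bounded below by a fixed constant, so I cannot simply bound $s_k$ from below; instead I would use \cref{ass:Hessian gradient smoothness condition}, which gives $\vnorm{\HHk \bggk} \leq L_1 \vnorm{\bggk}$ whenever $\gHgk > 0$, to lower bound $\sSPC$. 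For instance, $\sCG = \vnorm{\bggk}^2 / \gHgk \geq \vnorm{\bggk}^2 / (\vnorm{\bggk}\vnorm{\HHk \bggk}) \geq 1/L_1$ by Cauchy–Schwarz and \cref{ass:Hessian gradient smoothness condition}, and analogous bounds hold for $\sMR$ and $\sGM$ via \cref{prop:scaling upper bounds}. Combining the lower bound on $s_k$ with the unit-step regime (where $\alphak = 1$) versus the backtracking regime then yields the $\min$ structure in $\cSPC$, with the two arguments of the minimum corresponding to the small-gradient unit-step case and the bounded-below scaling case respectively.

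For the $\LPC$ and $\NC$ cases the scalings are directly bounded below by $\sLPC_{\tmin}$ and $\sNC_{\tmin}$ respectively (\cref{prop:scaling upper bounds} and the ranges set in \cref{alg:scaling selection}), which simplifies the analysis; here I only need the line search lower bound on $\alphak$ together with the upper bound $\vnorm{\ppk} = s_k \vnorm{\bggk} \leq (1/\sigma)\vnorm{\bggk}$ or $\sNC_{\tmax}\vnorm{\bggk}$ to produce $\cLPC$ and $\cNC$. The $\LPC$ constant again carries a $\min$ because $\alphak$ may either equal its unit-step threshold or be reduced by backtracking. Once all three per-iteration decrease constants are in hand, I would write $f(\xxo) - f^{\star} \geq \sum_{k=0}^{K-1} (f(\xxk) - f(\xxkk)) \geq \min\{\cSPC, \cLPC, \cNC\} \sum_{k=0}^{K-1} \vnorm{\bggk}^2$, and since the algorithm has not yet terminated we have $\vnorm{\bggk} > \eg$ for all $k < K$, giving $f(\xxo) - f^{\star} > K \min\{\cSPC,\cLPC,\cNC\} \eg^2$, which rearranges to the stated bound on $K$.

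I expect the main obstacle to be the $\SPC$ case, specifically reconciling the fact that the scaling has no uniform lower bound (only the upper bound $1/\sigma$ from \cref{prop:scaling upper bounds}) with the need for a per-iteration decrease proportional to $\vnorm{\bggk}^2$. The resolution hinges on using \cref{ass:Hessian gradient smoothness condition} to bound $s_k$ below by $1/L_1$, and on carefully tracking whether the line search accepts the unit step or must backtrack; these two subcases produce the two terms inside the minimum defining $\cSPC$. Care is also needed to verify the $\sqrt{\cdot}$ terms arise correctly from substituting the $\vnorm{\ppk}$ upper bounds into the line search thresholds \cref{eqn:positive curvature line search termination} and \cref{eqn:NC case line search termination}.
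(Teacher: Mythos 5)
Your overall architecture matches the paper's proof exactly: a case-by-case per-iteration decrease (the paper's \cref{lemma:per iteration decrease}) driven by the line search thresholds \cref{eqn:positive curvature line search termination,eqn:NC case line search termination}, a scaling lower bound under \cref{ass:Hessian gradient smoothness condition} for the $\SPC$ case (the paper's \cref{lemma:scaling lower bounds}), and a telescoping sum with $\vnorm{\bggk} > \eg$ before termination. However, two of your concrete steps would fail to deliver the stated constants. First, your claim that the $1/L_1$ lower bound on $\sCG$ transfers to $\sMR$ ``via \cref{prop:scaling upper bounds}'' goes the wrong way: that proposition gives $\sMR \leq \sGM \leq \sCG$, so a lower bound on $\sCG$ says nothing about $\sMR$. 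The MR bound requires a separate computation combining the $\SPC$ test with \cref{ass:Hessian gradient smoothness condition}, namely $\sMR = \gHg/\vnorm{\HH\bgg}^2 \geq \sigma\vnorm{\bgg}^2/(L_1^2\vnorm{\bgg}^2) = \sigma/L_1^2$, which is strictly weaker than $1/L_1$ (since $\sigma \leq L_1$ whenever $\SPC$ is triggered; see \cref{remark:scaling lower bound}) and is precisely why $\cSPC$ carries $\sigma/L_1^2$ rather than $1/L_1$.

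Second, your plan to bound the decrease in every case by lower-bounding $\rho \alphak s_k \vnorm{\bggk}^2$ does not produce the first argument of the minimum in $\cSPC$. In the backtracked $\SPC$ subcase (equivalently, when $\vnorm{\pp} > 6\sigma(1/2-\rho)/L_2$, cf.\ \cref{eqn:SPC step norm lower bound}), combining $\alpha \geq \sqrt{6\sigma(1/2-\rho)/(L_2 \vnorm{\pp})}$ with $s_k \geq \sigma/L_1^2$ yields a per-iteration decrease of the form $\rho\sqrt{6\sigma^2(\tfrac12 - \rho)/(L_2 L_1^2)}\,\vnorm{\bggk}^{3/2}$, which neither matches nor dominates $\sigma\rho\left(6\sigma(1/2-\rho)/L_2\right)^2$ across all parameter regimes. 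The paper instead invokes the second-order descent condition \cref{eqn:second-order descent} together with the $\SPC$ curvature test to write $-\dotprod{\pp,\bgg} \geq \dotprod{\pp, \HH\pp} > \sigma\vnorm{\pp}^2$, which combined with the step size threshold and the largeness of $\vnorm{\pp}$ gives a \emph{gradient-independent} constant decrease $\rho\sigma\left(6\sigma(1/2-\rho)/L_2\right)^2$, converted to an $\eg^2$ bound via $\eg < 1$; this maneuver is absent from your proposal. A related, milder slip occurs in your $\NC$ and $\LPC$ treatments: substituting the upper bounds $\vnorm{\pp} \leq \sNC_{\tmax}\vnorm{\bgg}$ (resp.\ $\vnorm{\bgg}/\sigma$) into the step size thresholds produces constants of the type $\sqrt{(\sNC_{\tmin}/\sNC_{\tmax})^2}$, weaker than the stated $\sqrt{\sNC_{\tmin}/\sNC_{\tmax}}$; the paper avoids this by keeping $\vnorm{\pp}$ inside the square root (so the decrease reads $\rho\sqrt{6(1-\rho)\vnorm{\pp}/(L_2\sNC_{\tmax})}\vnorm{\bgg}$) and only then applying the lower bounds $\vnorm{\pp} \geq \sNC_{\tmin}\vnorm{\bgg}$ and $\vnorm{\pp} \geq \sLPC_{\tmin}\vnorm{\bgg}$.
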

Before providing the proof, we note that, as discussed earlier, $L_1$ and $L_2$ are lower bounds for the global Lipschitz constants of the problem (if such constants exist), so the factor based on $L_1$ and $L_2$ could are smaller than those based on their global counterparts. The constant factor in \cref{prop:global convergence_detailed} depends quadratically on both $L_{1}$ and $L_{2}$, but the dependence on $L_{1}$ can be improved to linear if the MR scaling is not used; see \cref{remark:scaling lower bound}. This would align with the dependence on the Lipschitz gradient constant for standard gradient descent in the nonconvex setting.

To prove \cref{prop:global convergence_detailed}, we first consider a lemma which analyzes the worst case \emph{per-iteration} descent for the \texttt{NC}, \texttt{LPC}, and \texttt{SPC} separately. 

\begin{lemma}[Per-iteration Descent]\label{lemma:per iteration decrease}
    Consider \cref{ass:Hessian smoothness condition}, and let $\pp$ and $\alpha > 0$ be generated by \cref{alg:scaled gradient}. If $\gHg < 0$ ($\NC$), then
        \begin{align*}
            f(\xx + \alpha \pp) \leq f(\xx) - \rho\sqrt{\frac{6(1-\rho) \sNC_{\tmin}}{L_2 \sNC_{\tmax}}} \vnorm{\bgg}^{3/2}.
        \end{align*}
    If $0 \leq \gHg \leq \sigma \vnorm{\bgg}^2$ ($\LPC$), then
    \begin{align*}
        f(\xx + \alpha \pp) \leq f(\xx) - \rho\min\left\{\sLPC_{\tmin} \vnorm{\bgg}^2, \sqrt{\frac{6\sigma(1/2 - \rho)\sLPC_{\tmin}}{L_2}} \vnorm{\bgg}^{3/2} \right\}.
    \end{align*}
    Suppose $\gHg > \sigma \vnorm{\bgg}^2$ ($\SPC$). If
    \begin{align}
        \label{eqn:SPC step norm lower bound}
        \vnorm{\pp} \geq \frac{6 \sigma (1/2 - \rho)}{L_2},
    \end{align}
    then
    \begin{align*}
        f(\xx + \alpha \pp) \leq f(\xx) -\rho \sigma \left(\frac{6\sigma(1/2 - \rho)}{L_2}\right)^2.
    \end{align*}
    Otherwise, we have $\alpha =1$ and 
    \begin{align*}
        f(\xx + \pp) \leq  f(\xx) -\rho \vnorm{\pp}\vnorm{\bgg}.
    \end{align*}
\end{lemma}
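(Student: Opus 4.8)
The plan is to prove each of the three cases ($\NC$, $\LPC$, $\SPC$) separately, leveraging the termination bounds \cref{eqn:positive curvature line search termination,eqn:NC case line search termination} already derived in the preamble to \cref{thm:unit step size acceptance}, together with the scaling bounds from \cref{prop:scaling upper bounds}. The key observation is that the line search accepts any step satisfying these bounds, and when it backtracks it does so by a factor $\theta$, so the accepted step is at least $\theta$ times the threshold in the worst case; however, since the cleanest statements hold by bounding the \emph{accepted} descent against the threshold step, I would instead use the fact that whatever step the line search returns satisfies the Armijo condition \cref{eqn:armijo condition}, i.e., $f(\xx + \alpha\pp) - f(\xx) \leq \rho\alpha\dotprod{\pp,\bgg} = -\rho\alpha\vnorm{\pp}\vnorm{\bgg}$, and then lower bound $\alpha\vnorm{\pp}$ in each regime.

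First I would handle the $\NC$ case. Here $\sNC_{\tmin} \leq s \leq \sNC_{\tmax}$, so $\vnorm{\pp} = s\vnorm{\bgg} \geq \sNC_{\tmin}\vnorm{\bgg}$. The forward/backward tracking search guarantees acceptance of any $\alpha$ satisfying \cref{eqn:NC case line search termination}, so the accepted step satisfies (at worst) $\alpha \geq \sqrt{6(1-\rho)/(L_2\sNC_{\tmax}\vnorm{\pp})}$; multiplying by $\vnorm{\pp}$ and using $\vnorm{\pp}\leq\sNC_{\tmax}\vnorm{\bgg}$ in the numerator yields $\alpha\vnorm{\pp}\geq\sqrt{6(1-\rho)\vnorm{\pp}/(L_2\sNC_{\tmax})}\geq\sqrt{6(1-\rho)\sNC_{\tmin}/(L_2\sNC_{\tmax})}\,\vnorm{\bgg}^{1/2}$. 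Substituting into the Armijo descent gives the claimed $\vnorm{\bgg}^{3/2}$ rate. For the $\LPC$ case, I would use $\sLPC_{\tmin}\leq s\leq 1/\sigma$ and the non-negative curvature bound \cref{eqn:positive curvature line search termination}, which caps the accepted step at $\min\{1,\sqrt{6\sigma(1/2-\rho)/(L_2\vnorm{\pp})}\}$; this produces a minimum of two terms, one where $\alpha=1$ (giving $\sLPC_{\tmin}\vnorm{\bgg}^2$) and one where the square-root term binds (giving the $\vnorm{\bgg}^{3/2}$ term after using $\vnorm{\pp}\geq\sLPC_{\tmin}\vnorm{\bgg}$).

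The $\SPC$ case requires the dichotomy stated in the lemma. If $\vnorm{\pp} \geq 6\sigma(1/2-\rho)/L_2$, then I am in the regime where the square-root threshold in \cref{eqn:positive curvature line search termination} is at most $1$, so the binding constraint gives $\alpha\geq\sqrt{6\sigma(1/2-\rho)/(L_2\vnorm{\pp})}$; then $\alpha\vnorm{\pp}\geq\sqrt{6\sigma(1/2-\rho)\vnorm{\pp}/L_2}\geq 6\sigma(1/2-\rho)/L_2$ by the assumed lower bound on $\vnorm{\pp}$, and combined with $\vnorm{\bgg}\geq\sigma\vnorm{\pp}$ (since $s\leq 1/\sigma$ implies $\vnorm{\pp}\leq\vnorm{\bgg}/\sigma$) I recover the stated constant descent. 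Otherwise $\vnorm{\pp} < 6\sigma(1/2-\rho)/L_2$ means the square-root threshold exceeds $1$, so the unit step is accepted ($\alpha=1$, exactly as in \cref{thm:unit step size acceptance}), and the Armijo condition directly gives $f(\xx+\pp)\leq f(\xx)-\rho\vnorm{\pp}\vnorm{\bgg}$.

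The main obstacle I anticipate is keeping the direction of the inequalities straight when converting the line-search \emph{threshold} on $\alpha$ into a \emph{lower} bound on the accepted $\alpha\vnorm{\pp}$, since the threshold depends inversely on $\vnorm{\pp}$, so larger steps have smaller guaranteed $\alpha$ but the product $\alpha\vnorm{\pp}$ behaves like $\sqrt{\vnorm{\pp}}$ and must be bounded using the \emph{correct} one-sided bound on $\vnorm{\pp}$ (upper bound via $s\leq\sNC_{\tmax}$ or $1/\sigma$ in the numerator, lower bound via $s\geq\sNC_{\tmin}$ or $\sLPC_{\tmin}$ elsewhere). A subtle point worth flagging is that the forward/backward line search's backtracking factor $\theta$ does not appear in these bounds because the lemma statement is phrased in terms of the guaranteed descent at the threshold step rather than the exact accepted step; I would verify that the Armijo-based argument (bounding $-\rho\alpha\vnorm{\pp}\vnorm{\bgg}$ with $\alpha$ at least the threshold) is valid, noting that any accepted $\alpha$ at least matches the largest threshold-satisfying step in the worst case, which is what the stated constants reflect.
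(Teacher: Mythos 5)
Your proposal is correct and follows essentially the same route as the paper's proof: lower-bound the accepted step size via the thresholds \cref{eqn:positive curvature line search termination,eqn:NC case line search termination}, feed this into the Armijo decrease $-\rho\alpha\vnorm{\pp}\vnorm{\bgg}$, and invoke the scaling bounds $\vnorm{\pp}\geq \sNC_{\tmin}\vnorm{\bgg}$, $\vnorm{\pp}\geq \sLPC_{\tmin}\vnorm{\bgg}$, and $s\leq 1/\sigma$ case by case; even your treatment of the backtracking factor $\theta$ matches the paper, which likewise identifies the accepted step with the largest Armijo-satisfying one. The only immaterial deviations are in the large-$\vnorm{\pp}$ $\SPC$ subcase, where you use $\vnorm{\bgg}\geq\sigma\vnorm{\pp}$ (from \cref{prop:scaling upper bounds}) while the paper instead chains the second-order descent condition \cref{eqn:second-order descent} with the $\SPC$ curvature test to reach the same bound $-\rho\alpha\sigma\vnorm{\pp}^2$ and identical constant, and a slip of the pen in the $\NC$ case where you cite the upper bound $\vnorm{\pp}\leq\sNC_{\tmax}\vnorm{\bgg}$ although your displayed chain correctly uses only the lower bound $\vnorm{\pp}\geq\sNC_{\tmin}\vnorm{\bgg}$.
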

\begin{proof}
    ($\NC$) From \cref{eqn:NC case line search termination}, it follows that the largest step size satisfying \cref{eqn:armijo condition} must satisfy
    \begin{align*}
        \alpha \geq \sqrt{\frac{6(1-\rho)}{L_2 \sNC_{\tmax} \vnorm{\pp}}}.
    \end{align*}
    Hence, 
    \begin{align*}
        f(\xx + \alpha \pp) - f(\xx) &\leq  \rho \alpha \dotprod{\pp, \bgg} = -\rho \alpha \vnorm{\pp}\vnorm{\bgg}\leq - \rho \sqrt{\frac{6(1-\rho) \vnorm{\pp}}{L_2 \sNC_{\tmax}}} \vnorm{\bgg}.
    \end{align*}
    Applying $\vnorm{\pp} \geq \sNC_{\tmin} \vnorm{\bgg}$ gives the result. 
    
    ($\LPC$) Similarly, \cref{eqn:positive curvature line search termination} implies that the largest step size $\alpha \in (0,1]$ satisfying \cref{eqn:armijo condition} must satisfy 
    \begin{align*}
        \alpha \geq \min\left\{1, \sqrt{\frac{6\sigma(1/2 - \rho)}{L_2 \vnorm{\pp}}} \right\}.
    \end{align*}
    Now, from \cref{eqn:armijo condition} we obtain 
    \begin{align*}
        f(\xx + \alpha \pp) - f(\xx) &\leq \rho \alpha \dotprod{\pp, \bgg} = -\rho \alpha \vnorm{\pp} \vnorm{\bgg} \\
        &\leq -\rho\min\left\{\vnorm{\pp} \vnorm{\bgg}, \sqrt{\frac{6\sigma(1/2 - \rho)\vnorm{\pp}}{L_2}}\vnorm{\bgg} \right\}.
    \end{align*}
    Since $\sLPC \geq \sLPC_{\tmin}$, $\vnorm{\pp} \geq \sLPC_{\tmin} \vnorm{\bgg}$, which yields the result.
    
    ($\SPC$) Similar to the $\LPC$ case, \cref{eqn:positive curvature line search termination} implies that the largest step size $\alpha \in (0,1]$ satisfying \cref{eqn:armijo condition} must satisfy 
    \begin{align*}
        \min\left\{1, \sqrt{\frac{6\sigma(1/2 - \rho)}{L_2 \vnorm{\pp}}} \right\} \leq \alpha \leq 1.
    \end{align*}
    Suppose  \cref{eqn:SPC step norm lower bound} holds, which implies  
    \begin{align*}
        \alpha \geq \sqrt{\frac{6\sigma(1/2 - \rho)}{L_2 \vnorm{\pp}}}.
    \end{align*}
    In light of \cref{eqn:armijo condition,eqn:second-order descent} and the fact that  $\dotprod{\pp, \HH \pp} > \sigma \vnorm{\pp}^2$, we have
    \begin{align*}
        f(\xx + \alpha \pp) - f(\xx) &\leq \rho \alpha \dotprod{\pp, \bgg} \leq - \rho \alpha \dotprod{\pp, \HH \pp} 
        \leq - \rho \alpha \sigma \vnorm{\pp}^2 \leq -\rho \sigma \left(\frac{6\sigma(1/2 - \rho)}{L_2}\right)^2,
    \end{align*}
    where the final inequality follows from \cref{eqn:SPC step norm lower bound}. If \cref{eqn:SPC step norm lower bound} does not hold, then $\alpha=1$ and hence 
    \begin{align*}
        f(\xx + \pp) - f(\xx) &\leq \rho \dotprod{\pp, \bgg} = - \rho \vnorm{\bgg}\vnorm{\pp}.
    \end{align*}
\end{proof}

\begin{remark}
    Many of the cases considered considered in \cref{lemma:per iteration decrease} suggest descent with an improved dependence on the gradient norm, when compared with standard gradient descent. For instance, in the $\NC$ case the dependence is $\vnorm{\bgg}^{3/2}$, which matches the optimal rate for second order methods in the nonconvex setting. On the other hand, in the $\SPC$ case, with large $\vnorm{\pp}$, the descent is \textit{independent} of the gradient norm. An unfortunate consequence of worst case analysis is that these improvements are not reflected in the final rate. 
\end{remark}

By inspecting the proof of \cref{lemma:per iteration decrease}, we see that the per iteration decrease is independent of the scaling magnitude in all cases except $\SPC$ steps with $\alpha=1$. As discussed in the main body, we handle this case with a regularity condition on the Hessian-gradient product, \cref{ass:Hessian gradient smoothness condition}. In particular, by combining \cref{eqn:MR scaling defn}, $\sigma \vnorm{\bgg}^2 < \dotprod{\bgg, \HH \bgg }$, and $ \vnorm{\HH \bgg} \leq L_1 \vnorm{\bgg}$ from \cref{ass:Hessian gradient smoothness condition}, we can obtain a lower bound on the $\SPC$ scalings. 

\begin{lemma}[Scaling Lower Bounds] \label{lemma:scaling lower bounds}
    Consider \cref{ass:Hessian gradient smoothness condition} and let $\sigma \leq L_1$. We have
    \begin{align}
    \label{eqn:MR scaling lower bound}
        \frac{\sigma}{L_1^2} &\leq \sMR \leq \sGM \leq \sCG.
    \end{align}
\end{lemma}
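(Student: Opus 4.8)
The plan is to establish the chain of inequalities in \cref{eqn:MR scaling lower bound} by combining the ordering already proven in \cref{prop:scaling upper bounds} with a single new lower bound on $\sMR$. Recall from \cref{prop:scaling upper bounds} that in the $\SPC$ case we already have $0 < \sMR \leq \sGM \leq \sCG$, so the upper portion of the chain requires no further work. Thus the entire task reduces to proving the leftmost inequality $\sigma/L_1^2 \leq \sMR$.

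\paragraph{Main step.} For the lower bound on $\sMR$, I would start from its definition \cref{eqn:MR scaling defn}, namely $\sMR = \dotprod{\bgg, \HH\bgg}/\vnorm{\HH\bgg}^2$. The numerator is controlled below using the $\SPC$ hypothesis $\dotprod{\bgg, \HH\bgg} > \sigma \vnorm{\bgg}^2$, while the denominator is controlled above using \cref{ass:Hessian gradient smoothness condition}, which gives $\vnorm{\HH\bgg} \leq L_1 \vnorm{\bgg}$ precisely because the $\SPC$ condition guarantees $\dotprod{\bgg, \HH\bgg} > 0$, so the assumption is in force. Combining these,
\begin{align*}
    \sMR = \frac{\dotprod{\bgg, \HH\bgg}}{\vnorm{\HH\bgg}^2} \geq \frac{\sigma \vnorm{\bgg}^2}{L_1^2 \vnorm{\bgg}^2} = \frac{\sigma}{L_1^2},
\end{align*}
which is exactly the desired bound. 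The hypothesis $\sigma \leq L_1$ ensures the resulting bound $\sigma/L_1^2$ does not exceed the natural scale of the scalings (consistent with the upper bound $\sCG \leq 1/\sigma$ from \cref{prop:scaling upper bounds}), so the inequalities are not vacuous.

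\paragraph{Obstacle.} There is essentially no hard analytic obstacle here; the result is a short consequence of two one-line bounds. The only point requiring care is verifying that \cref{ass:Hessian gradient smoothness condition} genuinely applies. That assumption is conditional: it only asserts $\vnorm{\HH\bgg} \leq L_1 \vnorm{\bgg}$ when $\dotprod{\bgg, \HH\bgg} > 0$. Since we are in the $\SPC$ regime, where $\dotprod{\bgg, \HH\bgg} > \sigma \vnorm{\bgg}^2 > 0$ (and $\bgg \neq 0$ is implicit, so $\vnorm{\bgg} > 0$ and division is valid), the antecedent holds and the bound is available. I would state this activation of the assumption explicitly before the displayed computation. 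The remaining inequalities $\sMR \leq \sGM \leq \sCG$ are then simply cited from \cref{prop:scaling upper bounds}, completing the proof.
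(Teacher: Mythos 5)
Your proposal is correct and matches the paper's own argument exactly: the paper also obtains the lower bound by combining the definition \cref{eqn:MR scaling defn} with the $\SPC$ condition $\sigma\vnorm{\bgg}^2 < \dotprod{\bgg, \HH\bgg}$ (which activates \cref{ass:Hessian gradient smoothness condition} to give $\vnorm{\HH\bgg} \leq L_1\vnorm{\bgg}$), and cites the ordering $\sMR \leq \sGM \leq \sCG$ from \cref{prop:scaling upper bounds}. Your explicit note that the conditional hypothesis of \cref{ass:Hessian gradient smoothness condition} is satisfied in the $\SPC$ regime is a welcome touch of care that the paper leaves implicit.
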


\begin{remark} \label{remark:scaling lower bound}
    Note that since $\dotprod{ \bgg, \HH\bgg} \leq \vnorm{\HH \bgg} \vnorm{\bgg} \leq L_1 \vnorm{\bgg}^2$, the $\SPC$ case is detected only when $\sigma$ is chosen such that $\sigma \leq L_1$; otherwise \cref{alg:scaling selection} always returns either of $\LPC$ or $\NC$ scalings. Additionally, by applying $\vnorm{\HH\bgg} \leq L_1 \vnorm{\bgg}$ directly to $\sGM$ we can sharpen the lower bound for the CG and GM scalings to $1/L_1 \leq \sGM \leq \sCG$.
\end{remark}

The proof of \cref{prop:global convergence_detailed} proceeds by combining the worst case analysis of \cref{lemma:per iteration decrease} with the lower bounds from \cref{lemma:scaling lower bounds}.

\begin{proof}[Proof of \cref{prop:global convergence_detailed}]
    Suppose that $\vnorm{\bgg_k} \leq \eg$ fails to hold for $k=0,\ldots,K-1$. We divide the iterations $\sK = \{0, \ldots,K-1\}$ into a disjoint union $\sK = \sK_{\text{NC}} \cup \sK_{\text{LPC}} \cup \sK_{\text{SPC}} $ where 
    \begin{align*} 
        \sK_{\text{NC}} &= \left\{ k \in \sK \mid \dotprod{\bgg_k, \HH_k \bgg_k} < 0 \right\} \\
        \sK_{\text{LPC}} &= \left\{ k \in \sK \mid 0 \leq \dotprod{\bgg_k, \HH_k \bgg_k} \leq \sigma \vnorm{\bgg_k}^2 \right\} \\
        \sK_{\text{SPC}} &= \left\{ k \in \sK \mid \dotprod{\bgg_k, \HH_k \bgg_k} > \sigma \vnorm{\bgg_k}^2 \right\}.
    \end{align*}
    For $k \in \sK_{\text{NC}}$, \cref{lemma:per iteration decrease}, $\vnorm{\bgg_k} > \eg$ and $\eg < 1$ give
    \begin{align*}
         f(\xx_k) - f(\xx_{k+1}) >  \rho\sqrt{\frac{6(1-\rho) \sNC_{\tmin}}{L_2 \sNC_{\tmax}}} \eg^{3/2} \geq \cNC \eg^2,
    \end{align*}
    Similarly, for $k \in \sK_{\text{LPC}}$, \cref{lemma:per iteration decrease}, $\vnorm{\bgg_k} > \eg$, and $\eg < 1$ imply
    \begin{align*}
         f(\xx_k) -f(\xx_{k+1}) &> \rho\min\left\{\sLPC_{\tmin} \vnorm{\bgg_k}^2, \sqrt{\frac{6\sigma(1/2 - \rho)\sLPC_{\tmin}}{L_2}} \vnorm{\bgg_k}^{3/2} \right\} \\
        &> \rho\min\left\{\sLPC_{\tmin} \eg^2, \sqrt{\frac{6\sigma(1/2 - \rho)\sLPC_{\tmin}}{L_2}}\eg^{3/2} \right\}  \\
        &\geq \cLPC \eg^2,
    \end{align*}
    For $k \in \sK_{\text{SPC}}$, if $\cref{eqn:SPC step norm lower bound}$ does not hold, then \cref{lemma:scaling lower bounds,lemma:per iteration decrease} give
    \begin{align*}
        f(\xx_k) - f(\xx_{k+1}) &\geq \rho \vnorm{\pp_k} \vnorm{\bgg_k} 
        \geq \frac{\sigma \rho}{L_1^2} \vnorm{\bgg_k}^2 
        > \frac{\sigma \rho}{L_1^2} \eg^2.
    \end{align*}
    We can combine this with the case where $\cref{eqn:SPC step norm lower bound}$ is satisfied to obtain 
    \begin{align*}
        f(\xx_k) - f(\xx_{k+1}) &> \rho\min\left\{ \sigma \left(\frac{6\sigma^2(1/2 - \rho)}{L_2} \right)^2,  \frac{\sigma}{L_1^2} \eg^2\right\} \\
        &\geq  \sigma\rho\min\left\{ \left(\frac{6\sigma(1/2 - \rho)}{L_2} \right)^2,  \frac{1}{L_1^2} \right\}\eg^2 \\
        &= \cSPC \eg^2,
    \end{align*}
    where in the second line we applied $\eg<1$. Finally, we apply a telescoping sum over each case
    \begin{align*}
        f(\xx_0) - f(\xx_K) &= \sum_{i=0}^{K-1} f(\xx_k) - f(\xx_{k+1}) \\
        &= \sum_{k\in\sK_{\text{NC}}} f(\xx_k) - f(\xx_{k+1}) + \sum_{k\in\sK_{\text{LPC}}} f(\xx_k) - f(\xx_{k+1}) + \sum_{k\in\sK_{\text{SPC}}} f(\xx_k) - f(\xx_{k+1}) \\
        &> |\sK_{\text{NC}} | \cNC \eg^2 + |\sK_{\text{LPC}}|\cLPC \eg^2 + |\sK_{\text{SPC}}| \cSPC \eg^2 \\
        &\geq (|\sK_{\text{NC}} | + |\sK_{\text{LPC}} | + |\sK_{\text{SPC}}|)\min\left\{ \cNC, \cLPC, \cSPC  \right\}\eg^2 \\
        &= K\min\left\{ \cNC, \cLPC, \cSPC \right\}\eg^2 \\
        &\geq f(\xx_0) - f^{\star},
    \end{align*}
    which implies $f^{\star} > f(\xx_K)$, leading to a contradiction.
\end{proof}

\subsection{Curvature Condition Extension} \label{apx:curvature condition extension}

Our result in \cref{thm:local convergence} offers a guarantee on when $\alpha=1$ will satisfy the Amijo condition \cref{eqn:armijo condition}. Recall that Amijo condition will be satisfied by any sufficiently short step length (if $\alpha=1$ any small scaling $s$). For this reason the Amijo condition is often paired with a \textit{curvature condition} \citep{nocedalNumericalOptimization2006}, which ensures that the step length is \textit{large enough}, in the sense that the directional derivative at the new point has increased sufficiently from its initial value. 
That is, for a descent direction $\pp$ and $\eta \in (0,1)$ we require $\alpha$ to satisfy
\begin{align*}
    \dotprod{\bgg(\xx + \alpha \pp), \pp} \geq \eta \dotprod{\bgg(\xx), \pp}. \tageq\label{eqn:curvature condition}
\end{align*}
When $\rho < \eta$ the pair \cref{eqn:armijo condition} and \cref{eqn:curvature condition} are known as the Wolfe conditions. It should be noted that, while the Amijo condition \cref{eqn:armijo condition} can be guaranteed to hold by employing backtracking line search, satisfaction of the Wolfe conditions requires more sophisticated search techniques \citep{nocedalNumericalOptimization2006}. Fortunately, the next proposition reveals that, under certain smoothness conditions, $\SPC$ scalings automatically satisfy \cref{eqn:curvature condition} locally with $\alpha=1$.
\begin{proposition} \label{prop:curvature condition}
    Consider \cref{ass:Hessian smoothness condition}. If
    \begin{align*}
        \vnorm{\bgg} \leq \frac{2\sigma^2\eta}{L_2}, \tageq\label{eqn:curvature condition gradient upper bound - CG case}
    \end{align*}
    then \cref{eqn:curvature condition} is satisfied by $\pCG$ with $\alpha=1$ and $\eta \in (0,1)$. Now consider, in addition, \cref{ass:Hessian gradient smoothness condition}. If 
    \begin{align*}
        \vnorm{\bgg} \leq \frac{2\sigma^2}{L_2}\left(\eta - \left( 1 - \frac{\sigma^2}{L_1^2} \right) \right), \tageq\label{eqn:curvature condition gradient upper bound - MRGM case}
    \end{align*}
    then $\pGM$ and $\pMR$ satisfy \cref{eqn:curvature condition} with $\alpha=1$ and $\eta > 1 - \sigma^2/L_1^2$.

\end{proposition}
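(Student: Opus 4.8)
The plan is to reduce the curvature condition \cref{eqn:curvature condition} at $\alpha=1$ to a single scalar inequality and then verify it scaling-by-scaling. First I would control the gradient at the new point using the directional Hessian bound \cref{eqn:gradient upper bound}. Writing $\bgg^+ = \bgg(\xx + \pp)$, taking the inner product of $\bgg^+ - \bgg - \HH\pp$ with $\pp$ and applying Cauchy--Schwarz gives
\begin{align*}
    \dotprod{\bgg^+, \pp} \geq \dotprod{\bgg, \pp} + \dotprod{\pp, \HH\pp} - \frac{L_2}{2}\vnorm{\pp}^3.
\end{align*}
Hence \cref{eqn:curvature condition} follows once I establish $(1-\eta)\dotprod{\bgg,\pp} + \dotprod{\pp,\HH\pp} - \tfrac{L_2}{2}\vnorm{\pp}^3 \geq 0$. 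Substituting $\pp = -s\bgg$ and using collinearity ($\dotprod{\bgg,\pp} = -s\vnorm{\bgg}^2$, $\dotprod{\pp,\HH\pp} = s^2\gHg$, $\vnorm{\pp} = s\vnorm{\bgg}$) and dividing through by $s>0$, the target becomes
\begin{align*}
    s\gHg - (1-\eta)\vnorm{\bgg}^2 - \frac{L_2}{2}s^2\vnorm{\bgg}^3 \geq 0.
\end{align*}
The uniform strategy across the three scalings is to lower-bound the curvature term $s\gHg$ and to use $s^2 \leq 1/\sigma^2$ from \cref{prop:scaling upper bounds}.

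For the CG scaling, the second-order descent property holds with \emph{equality} (see the proof of \cref{prop:second-order descent}), so $\sCG\gHg = \vnorm{\bgg}^2$ exactly. The $(1-\eta)\vnorm{\bgg}^2$ term then cancels cleanly, leaving the requirement $\eta\vnorm{\bgg}^2 - \tfrac{L_2}{2}(\sCG)^2\vnorm{\bgg}^3 \geq 0$; bounding $\sCG \leq 1/\sigma$ reduces this to $\vnorm{\bgg} \leq 2\sigma^2\eta/L_2$, which is precisely \cref{eqn:curvature condition gradient upper bound - CG case}. This case invokes only \cref{ass:Hessian smoothness condition}, since the $\SPC$ bound $\sCG \leq 1/\sigma$ is all that is used.

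The MR and GM cases are the crux, and I expect the main obstacle to be recovering a sufficiently large lower bound on $s\gHg$: the clean cancellation is lost here because second-order descent holds only as an inequality, so $s\gHg < \vnorm{\bgg}^2$ strictly. For MR, where $\sMR\gHg = (\gHg)^2/\vnorm{\HH\bgg}^2$, I would combine \cref{ass:Hessian gradient smoothness condition} ($\vnorm{\HH\bgg} \leq L_1\vnorm{\bgg}$) with the $\SPC$ condition ($\gHg > \sigma\vnorm{\bgg}^2$) to obtain $\sMR\gHg > (\sigma^2/L_1^2)\vnorm{\bgg}^2$. Substituting this together with $s^2 \leq 1/\sigma^2$ into the scalar inequality reduces the requirement to $\vnorm{\bgg} \leq (2\sigma^2/L_2)\bigl(\eta - (1 - \sigma^2/L_1^2)\bigr)$, which is \cref{eqn:curvature condition gradient upper bound - MRGM case}; positivity of the right-hand side is exactly the stated restriction $\eta > 1 - \sigma^2/L_1^2$. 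For GM, the same argument with $\sGM\gHg = \vnorm{\bgg}\gHg/\vnorm{\HH\bgg}$ yields the stronger bound $\sGM\gHg > (\sigma/L_1)\vnorm{\bgg}^2$, and since $\sigma \leq L_1$ in the $\SPC$ regime (cf. \cref{remark:scaling lower bound}) we have $\sigma/L_1 \geq \sigma^2/L_1^2$, so the MR threshold is the binding one and GM satisfies \cref{eqn:curvature condition gradient upper bound - MRGM case} automatically. The delicate point throughout is the interplay between the two lower bounds on $\gHg$ and the upper bound on $\vnorm{\HH\bgg}$, which is what forces the correction term $\sigma^2/L_1^2$ and the restricted admissible range of $\eta$.
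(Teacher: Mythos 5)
Your proposal is correct and follows essentially the same route as the paper's proof: both start from the cubic lower bound $\dotprod{\bgg(\xx+\pp), \pp} \geq \dotprod{\bgg, \pp} + \dotprod{\pp, \HH\pp} - \tfrac{L_2}{2}\vnorm{\pp}^3$ derived from \cref{ass:Hessian smoothness condition}, exploit the exact identity $\sCG\gHg = \vnorm{\bgg}^2$ (equivalently, second-order descent with equality) in the CG case, and for MR/GM combine the $\SPC$ test with $\vnorm{\HH\bgg} \leq L_1\vnorm{\bgg}$ to obtain the $\sigma^2/L_1^2$ correction, which is exactly the paper's use of \cref{lemma:scaling lower bounds}. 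Your sharper observation that GM admits the stronger bound $\sigma/L_1 \geq \sigma^2/L_1^2$ mirrors the remark the paper makes immediately after its proof.
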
 
\begin{proof}[Proof of \cref{prop:curvature condition}] Note that in this proof we make explicit reference to the iteration counter to make tracking updates simpler. In particular, we write $\xx_{k+1} = \xx_k + \pp_k$. A Taylor expansion for the gradient yields 
\begin{align*}
    \bgg_{k+1} &= \bgg_k + \int_{0}^1 \HH(\xx_k + t\pp_k) \pp_k \, dt \\
    &= \bgg_k + \int_0^1 (\HH(\xx_k + t \pp_k) - \HH_k + \HH_k)\pp_k \, dt.
\end{align*}
Taking the inner product with $\pp_k$ and applying \cref{ass:Hessian smoothness condition}
\begin{align*}
    \dotprod{\bgg_{k+1}, \pp_k} &= \dotprod{\bgg_k, \pp_k} + \dotprod{\pp_k, \HH_k \pp_k} + \int_0^1 \dotprod{\pp_k, (\HH(\xx_k + t \pp_k) - \HH_k)\pp_k} \, dt \\
    &\geq \dotprod{\bgg_k, \pp_k} + \dotprod{\pp_k, \HH_k \pp_k} - L_2 \int_0^1 t \vnorm{\pp_k}^3 \, dt \\ 
    &= \dotprod{\bgg_k, \pp_k} + \dotprod{\pp_k, \HH_k \pp_k} - \frac{L_2}{2} \vnorm{\pp_k}^3.
\end{align*}
We can write $\vnorm{\pp_k}^3 = - s_k \dotprod{\bgg_k, \pp_k} \vnorm{\pp_k}$ so that 
\begin{align*}
    \dotprod{\bgg_{k+1}, \pp_k} &\geq \dotprod{\bgg_k, \pp_k} + \dotprod{\pp_k, \HH_k \pp_k} + \frac{L_2}{2} s_k \dotprod{\bgg_k, \pp_k}\vnorm{\pp_k}.
\end{align*}
Now considering the CG scaling specifically. Combining the condition on the gradient norm \cref{eqn:curvature condition gradient upper bound - CG case} with \cref{prop:scaling upper bounds} we have
\begin{align*}
    \sCG_k \vnorm{\pCG_k} \leq \frac{\vnorm{\bgg_k}}{\sigma^2} \leq \frac{2\eta}{L_2},
\end{align*}
so that 
\begin{align*}
    \dotprod{\bgg_{k+1}, \pp_k} &\geq \dotprod{\bgg_k, \pp_k} + \dotprod{\pp_k, \HH_k \pp_k} + \eta\dotprod{\bgg_k, \pp_k}.
\end{align*}
Finally, for the CG scaling, $\dotprod{\bgg_k, \pCG_k} + \dotprod{\pCG_k, \HH_k\pCG_k} =0 $ (see the proof of \cref{prop:second-order descent} in \cref{apx:proof-basic-properties}) so 
\begin{align*}
    \dotprod{\bgg_{k+1}, \pCG_k} \geq \eta\dotprod{\bgg_k, \pCG_k},
\end{align*}
which is \cref{eqn:curvature condition}.

For the GM and MR scalings the second order descent condition \cref{eqn:second-order descent} doesn't hold with equality, so we need to lower bound the second order expansion. Starting from the positive curvature test $\dotprod{\pp_k, \HH \pp_k} \geq \sigma\vnorm{\pp_k}^2$ we have
\begin{align*}
    \dotprod{\bgg_{k+1}, \pp_k} &\geq \dotprod{\bgg_k, \pp_k} + \sigma \vnorm{\pp_k}^2 + \frac{L_2}{2} s_k \dotprod{\bgg_k, \pp_k}\vnorm{\pp_k}. \\ 
    &= \left(1 - s_k \sigma + \frac{L_2}{2} s_k \vnorm{\pp_k} \right)\dotprod{\bgg_k, \pp_k}.
\end{align*}
Next we apply a lower bound to the scaling, which requires \cref{ass:Hessian gradient smoothness condition}. In particular, from \cref{lemma:scaling lower bounds} we have $\sGM \geq \sMR \geq \sigma/L_1^2$ and so for $\pp_k = -s_k \bgg_k$ with $s_k \in \{\sGM, \sMR\}$ we have
\begin{align*}
    \dotprod{\bgg_{k+1}, \pp_k} \geq \left(1 - \sigma^2/L_1^2 + \frac{L_2}{2} s_k \vnorm{\pp_k} \right)\dotprod{\bgg_k, \pp_k}.
\end{align*}
Finally, applying \cref{eqn:curvature condition gradient upper bound - MRGM case} and \cref{prop:scaling upper bounds} we obtain
\begin{align*}
    s_k\vnorm{\pp_k} \leq \frac{\vnorm{\bgg_k}}{\sigma^2} \leq \frac{2}{L_2}\left(\eta - \left( 1 - \frac{\sigma^2}{L_1^2} \right) \right),
\end{align*}
and hence $\dotprod{\bgg_{k+1}, \pp_k} \geq \eta \dotprod{\bgg_k, \pp_k}$.
\end{proof}
We remark that an improved dependence on $\sigma$ and $L_1$ can be obtained in \cref{eqn:curvature condition gradient upper bound - MRGM case} for the GM scaling by applying the tighter bound $\sGM_k \geq 1/L_1$ (See \cref{remark:scaling lower bound}).

\subsection{Proof of \cref{thm:local convergence}}
\label{sec:appendix:proofs}

 We begin with a lemma, which bounds on the objective function sub-optimality and the gradient norm in a region local to a second order sufficient minima.     
\begin{lemma}
    \label{lem: mu and M local}
    Suppose that $\xx^\star$ satisfies the second order sufficient conditions. Furthermore, suppose that $r$ is chosen such that \cref{eqn:second-order sufficient conditions} holds for any $\xx \in \sB^*_r$, then
    \begin{align*}
        \frac{\mu}{2} \vnorm{\xx - \xx^{\star}}^2 \leq f(\xx) - f(\xx^{\star}) &\leq \frac{M}{2} \vnorm{\xx - \xx^{\star}}^2, \tageq\label{eqn:function local bound} \\
        f(\xx) - f(\xx^{\star}) &\leq \frac{1}{2\mu} \vnorm{\bgg}^2, \tageq\label{eqn:local PL inequality}\\
        \mu \vnorm{\xx - \xx^{\star}} \leq \vnorm{\bgg(\xx)} &\leq M\vnorm{\xx - \xx^{\star}}. \tageq\label{eqn:gradient local bound}
    \end{align*}
\end{lemma}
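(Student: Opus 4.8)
The plan is to derive all three estimates from a single tool: the second-order Taylor expansion with integral remainder, combined with the fact that the Hessian spectrum is pinched between $\mu$ and $M$ everywhere on $\sB_r^\star$. The crucial geometric observation is that $\sB_r^\star$ is convex and contains both $\xx$ and $\xx^\star$, so the entire segment $\{\xx^\star + t(\xx - \xx^\star) : t \in [0,1]\}$ lies in $\sB_r^\star$; hence the spectral bound $\mu \eye \preceq \HH(\cdot) \preceq M \eye$ coming from \cref{eqn:second-order sufficient conditions} is valid at every point I integrate over. Writing $\dd \defeq \xx - \xx^\star$ and using $\bgg(\xx^\star) = 0$, Taylor's theorem gives $f(\xx) - f(\xx^\star) = \int_0^1 (1-t)\dotprod{\dd, \HH(\xx^\star + t\dd)\dd}\, dt$.

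For the first pair of inequalities \cref{eqn:function local bound} I would bound the integrand pointwise by $\mu\vnorm{\dd}^2$ from below and $M\vnorm{\dd}^2$ from above, and then use $\int_0^1 (1-t)\,dt = 1/2$. For the gradient bounds \cref{eqn:gradient local bound}, I would instead invoke the first-order fundamental theorem of calculus, $\bgg(\xx) = \bgg(\xx) - \bgg(\xx^\star) = \bar\HH \dd$ with $\bar\HH \defeq \int_0^1 \HH(\xx^\star + t\dd)\, dt$. Since the Loewner bounds $\mu\eye \preceq \HH \preceq M\eye$ are preserved under integration in $t$, the averaged matrix again satisfies $\mu\eye \preceq \bar\HH \preceq M\eye$; the estimates $\lambda_{\min}(\bar\HH)\vnorm{\dd} \leq \vnorm{\bar\HH\dd} \leq \lambda_{\max}(\bar\HH)\vnorm{\dd}$ valid for symmetric positive definite $\bar\HH$ then yield $\mu\vnorm{\dd} \leq \vnorm{\bgg(\xx)} \leq M\vnorm{\dd}$ directly.

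The remaining inequality \cref{eqn:local PL inequality} is the one to treat with care, and I expect it to be the main obstacle: naively chaining \cref{eqn:function local bound} with \cref{eqn:gradient local bound} only produces the weaker constant $M/(2\mu^2)$, so I would instead argue through local strong convexity. Because $\HH \succeq \mu\eye$ on the convex set $\sB_r^\star$, the standard strong-convexity inequality $f(\xx^\star) \geq f(\xx) + \dotprod{\bgg(\xx), \xx^\star - \xx} + \tfrac{\mu}{2}\vnorm{\xx^\star - \xx}^2$ holds (it follows from the same Taylor remainder applied along the segment from $\xx$ to $\xx^\star$, which stays in $\sB_r^\star$). Rearranging this and then applying Young's inequality $\dotprod{\bgg(\xx), \dd} \leq \tfrac{1}{2\mu}\vnorm{\bgg(\xx)}^2 + \tfrac{\mu}{2}\vnorm{\dd}^2$ cancels the quadratic term exactly and leaves $f(\xx) - f(\xx^\star) \leq \tfrac{1}{2\mu}\vnorm{\bgg(\xx)}^2$. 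The delicate point is precisely that the sharp constant $1/(2\mu)$ is unobtainable by composing the other two bounds and genuinely requires this strong-convexity/Young argument, which in turn is only licensed \emph{locally}; this is exactly why the convexity of $\sB_r^\star$, guaranteeing that every connecting segment stays in the region of spectral control, must be used consistently throughout.
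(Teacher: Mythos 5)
Your proof is correct and follows essentially the same route as the paper's: a Taylor expansion along the segment joining $\xx$ and $\xx^\star$ (which stays in $\sB_r^\star$ by convexity) combined with the spectral pinching $\mu\eye \preceq \HH \preceq M\eye$ for \cref{eqn:function local bound}, and for \cref{eqn:local PL inequality} your Young's-inequality step at $\yy = \xx^\star$ is exactly the closed-form quadratic minimization the paper performs. The only divergence is minor: for \cref{eqn:gradient local bound} you write $\bgg(\xx) = \bar{\HH}\dd$ with the symmetric averaged Hessian $\bar{\HH} = \int_0^1 \HH(\xx^\star + t\dd)\,dt$ and use its Loewner bounds directly, whereas the paper expands $\vnorm{\bgg(\xx)}^2$ as a double integral and invokes eigenvalue inequalities for products of positive definite matrices — your variant is arguably cleaner since it only ever takes quadratic forms of a symmetric matrix.
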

\begin{proof}
    For any $\xx, \yy \in \real^d$, the mean value theorem for twice differentiable functions implies
    \begin{align*}
        f(\yy) = f(\xx) + \dotprod{\bgg(\xx), \yy - \xx} +\frac{1}{2}\dotprod{\yy - \xx, \HH(\xx + t(\yy - \xx)(\yy - \xx)},
    \end{align*}
    for some $t \in (0,1)$. If $\xx, \yy \in \sB^*_r$, then $\xx + t(\yy - \xx) \in \sB^*_r$, and so 
    \begin{align*}
        \frac{\mu}{2}\vnorm{\yy -\xx}^2 \leq f(\yy) - f(\xx) - \dotprod{\bgg(\xx), \yy - \xx} \leq \frac{M}{2}\vnorm{\yy -\xx}^2. 
    \end{align*}
    By setting $\xx = \xx^{\star}$ and $\yy = \xx \in \sB^*_r$ we obtain \cref{eqn:function local bound}. Now, by rearranging we obtain
    \begin{align*}
        f(\yy) \geq f(\xx) + \dotprod{\bgg(\xx), \yy - x} + \frac{\mu}{2} \vnorm{\yy -\xx}^2. 
    \end{align*}
    Minimizing both sides over $\sB_r^\star$ gives \cref{eqn:local PL inequality} since
    \begin{align*}
        f(\xx^{\star}) = \min_{\yy \in \sB_r^\star} f(\yy) &\geq \min_{\yy \in \sB_r^\star} f(\xx) + \dotprod{\bgg(\xx), \yy - x} + \frac{\mu}{2} \vnorm{\yy -\xx}^2 \\
        &\geq \min_{\yy \in \real^d}f(\xx) + \dotprod{\bgg(\xx), \yy - \xx} + \frac{\mu}{2} \vnorm{\yy -\xx}^2 \\
        &= f(\xx) - \frac{1}{2\mu} \vnorm{\bgg(\xx)}^2.
    \end{align*}
    Finally, recall that for twice continuously differentiable functions we have
    \begin{align*}
        \bgg(\xx) = \int_0^1 \HH(\xx^{\star} + t(\xx - \xx^{\star}))(\xx - \xx^{\star}) \, dt.
    \end{align*}
    Hence, for $\xx \in \sB_r^\star$, we get
    \begin{align*}
        \vnorm{\bgg(\xx)}^2 =\dotprod{\bgg(\xx), \bgg(\xx)} &= \dotprod{\int_0^1 \HH(\xx^{\star} + t(\xx - \xx^{\star}))(\xx - \xx^{\star}) \, dt, \int_0^1 \HH(\xx^{\star} + s(\xx - \xx^{\star}))(\xx - \xx^{\star}) \, ds} \\
        &= \int_0^1 \int_0^1 \dotprod{ \HH(\xx^{\star} + t(\xx - \xx^{\star}))(\xx - \xx^{\star}) , \HH(\xx^{\star} + s(\xx - \xx^{\star}))(\xx - \xx^{\star}) }\, ds \, dt \\
        &= \int_0^1 \int_0^1 \dotprod{\xx - \xx^{\star} , \HH(\xx^{\star} + t(\xx - \xx^{\star}))\HH(\xx^{\star} + s(\xx - \xx^{\star}))(\xx - \xx^{\star}) }\, ds \, dt .
    \end{align*}
    For positive definite matrices $\AA$ and $\BB$, we have $\lambda_{\tmin}(\AA)\lambda_{\tmin}(\BB) \leq \lambda_{\tmin}(\AA\BB)$ and $\lambda_{\tmax}(\AA\BB) \leq \lambda_{\tmax}(\AA)\lambda_{\tmax}(\BB)$. Now since $t, s \in [0, 1]$ we have
    \begin{align*}
        \mu^2\eye \preceq \HH(\xx^{\star} + t(\xx - \xx^{\star}))\HH(\xx^{\star} + s(\xx - \xx^{\star})) \preceq M^2\eye,
    \end{align*}
    which gives the bounds in \cref{eqn:gradient local bound}.
\end{proof}

With \cref{lem: mu and M local} in hand we prove \cref{thm:local convergence}.

\begin{proof}[Proof of \cref{thm:local convergence}]
    Since $\bgg(\xx^{\star}) = 0$ and the gradient is continuous, we can choose $r' \leq r$ small enough that that \cref{eqn:gradient line search termination} and \cref{eqn:second-order sufficient conditions} hold on $\xx \in \sB_{r'}^\star$.
    Now suppose that $\xx_k \in \sB_{r'}^\star$, \cref{thm:unit step size acceptance,eqn:armijo condition,eqn:local PL inequality} imply
    \begin{align*}
        f(\xx_{k+1}) &\leq f(\xx_k) + \rho \dotprod{\pp_k, \bgg_k} = f(\xx_k) - s_k \rho \vnorm{\bgg_k}^2 \leq f(\xx_k) - 2 s_k  \mu \rho\left( f(\xx_k) - f(\xx^{\star}) \right),
    \end{align*}
    which implies
    \begin{align}
    \label{eqn:local function sub-optimality recursion}
    f(\xx_{k+1}) - f(\xx^{\star}) &\leq \left( 1 -  2\mu\rho s_k \right)(f(\xx_k) - f(\xx^{\star})).
    \end{align}
    Since $\HH_k \succ 0$, we only need to consider $\LPC$ and $\SPC$ scalings. If $\bgg_k$ is a $\SPC$ direction, then following a similar line of reasoning as that in  \cref{lemma:scaling lower bounds}, with $M$ taking the place of $L_1$, we get $s_{k} = \sSPC \geq 1/M$. Note that for the MR scaling, this bound follows from $\mu \eye \preceq \HH_k \preceq M\eye$ and \cref{lemma:tighter MR lower bound}. Hence, \cref{eqn:local function sub-optimality recursion} gives
    \begin{align}
        f(\xx_{k+1}) - f(\xx^{\star}) &\leq \left( 1 -  \frac{2\mu\rho}{M}\right)(f(\xx_k) - f(\xx^{\star})). \label{eqn:local function sub-optimality recursion SPC CASE}
    \end{align}
    If $\sigma < \mu$ then clearly $\bgg_k$ cannot be an $\LPC$ direction and so \cref{eqn:local function sub-optimality recursion SPC CASE} yields the desired recursion in function sub-optimality. On the other hand, if $\sigma \geq \mu$ and $\bgg_k$ is an $\LPC$ direction, then we simply apply $\sLPC_{\tmin} \leq \sLPC$ to \cref{eqn:local function sub-optimality recursion} to obtain
    \begin{align}
        f(\xx_{k+1}) - f(\xx^{\star}) &\leq \left( 1 -  2\mu\sLPC_{\tmin}\rho \right)(f(\xx_k) - f(\xx^{\star})). \label{eqn:local function sub-optimality recursion LPC CASE}
    \end{align}
    The desired recursion follows from combining \cref{eqn:local function sub-optimality recursion SPC CASE} and \cref{eqn:local function sub-optimality recursion LPC CASE}.
    
    It remains to show that the iterates remain in $\sB_{r'}^\star$. To this end, define the set
    \begin{align*}
        \sF = \left\{ x \mid f(\xx) - f(\xx^{\star}) \leq \frac{\mu \sigma^{2} r^2}{2\left(\sigma + M\right)^2} \right\}.
    \end{align*}
    Now suppose that an iterate $\xx_k$ is sufficiently close to $\xx^{\star}$ such that $\xx_k \in B_{r'}^\star \cap \sF$. Such  $\xx_k$ exists, as implied by the upper bound in \cref{eqn:function local bound}. We now show that all subsequent iterates remain in $B_{r'}^\star \cap \sF$. Indeed, from the earlier part of the proof, we get $f(\xx_{k+1}) - f(\xx^{\star}) \leq (1-\tau)(f(\xx_k) - f(\xx^{\star}))$, which implies $ \xx_{k+1} \in \sF$. On the other hand, from $\xx_{k+1} = \xx_k - s_k \bgg_k$ (since $\alpha=1$ is accepted by the line search), $s \leq 1/\sigma$ (\cref{prop:scaling upper bounds}), the upper bound in \cref{eqn:gradient local bound}, and the lower bound in $\cref{eqn:function local bound}$, it follows that 
    \begin{align*}
        \vnorm{\xx_{k+1} - \xx^{\star}} &\leq \vnorm{\xx_k - \xx^{\star}} + s_k\vnorm{\bgg_k} \leq \left(1 + \frac{M}{\sigma} \right)\vnorm{\xx_k - \xx^{\star}} \\
        &\leq \left(1 + \frac{M}{\sigma}\right) \sqrt{\frac{2}{\mu} (f(\xx_k) - f(\xx^{\star}))} \leq r,
    \end{align*}
    which gives $\xx_{k+1} \in \sB_{r'}^\star$, and hence $\xx_{k+1} \in \sB_{r'}^\star \cap \sF$.
\end{proof}

\subsection{Convergence in Gradient Norm and Proof of \cref{thm:local convergence MR second-order sufficient}} 
\label{apx:convergence in gradient norm}

We prove \cref{thm:local convergence MR second-order sufficient} by considering a more general result, \cref{prop:gradient norm recursion}, to which \cref{thm:local convergence MR second-order sufficient} is a corollary. Our analysis begins by utilizing \cref{ass:Hessian smoothness condition} and a bound on the residual to obtain a recursion in the gradient norm. 

\begin{lemma} \label{lemma:gradient norm recursion}
    Considering \cref{ass:Hessian smoothness condition} and supposing $\vnorm{\HH\bgg} \neq 0$ and $\vnorm{\bgg} \neq 0$ we have the following recursion for the gradient with the step $\pp=-\sMR\bgg$
    \begin{align*}
        \vnorm{\bgg(\xx + \pp)} &\leq \frac{L_2 }{2}\vnorm{\pp}^2 + \sqrt{1 - \cos^2{\theta}}\vnorm{\bgg}, \tageq\label{eqn:MR case upper bound}
    \end{align*}
    where $\theta \defeq \measuredangle(\bgg, \HH \bgg)$ is the angle between $\bgg$ and $\HH \bgg$.
\end{lemma}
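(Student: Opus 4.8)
The plan is to split the new gradient $\bgg(\xx+\pp)$ into a linearized part and a Taylor remainder, then bound each separately. First I would write
$\bgg(\xx+\pp) = \bigl(\bgg(\xx+\pp) - \bgg - \HH\pp\bigr) + \bigl(\bgg + \HH\pp\bigr)$
and apply the triangle inequality. The first bracket is exactly the quantity controlled by \cref{ass:Hessian smoothness condition} through the Taylor-type bound \cref{eqn:gradient upper bound}, giving $\vnorm{\bgg(\xx+\pp) - \bgg - \HH\pp} \leq (L_2/2)\vnorm{\pp}^2$. This immediately produces the first term on the right-hand side of \cref{eqn:MR case upper bound}, so the remaining work is entirely concentrated in the linearized residual $\vnorm{\bgg + \HH\pp}$.

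The second step is to show that this residual equals $\sqrt{1-\cos^2\theta}\,\vnorm{\bgg}$. Substituting $\pp = -\sMR\bgg$ gives $\bgg + \HH\pp = \bgg - \sMR\HH\bgg$, so I would compute $\vnorm{\bgg - \sMR\HH\bgg}^2$ directly by expanding the square and inserting the definition \cref{eqn:MR scaling defn}, $\sMR = \dotprod{\bgg,\HH\bgg}/\vnorm{\HH\bgg}^2$. The cross term and the quadratic term combine so that the expansion collapses to $\vnorm{\bgg}^2 - (\dotprod{\bgg,\HH\bgg})^2/\vnorm{\HH\bgg}^2$. Recognizing $\cos\theta = \dotprod{\bgg,\HH\bgg}/(\vnorm{\bgg}\vnorm{\HH\bgg})$, this factors as $\vnorm{\bgg}^2(1-\cos^2\theta)$; taking the square root and combining with the remainder bound yields \cref{eqn:MR case upper bound}.

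Conceptually the one nontrivial observation is that the MR scaling is precisely the minimizer of $\vnorm{\bgg - s\HH\bgg}$ over $s$, i.e.\ $\sMR\HH\bgg$ is the orthogonal projection of $\bgg$ onto $\mathrm{span}(\HH\bgg)$. The residual is then the perpendicular component, of length $\vnorm{\bgg}\sin\theta$, which is the geometric meaning of the $\sqrt{1-\cos^2\theta}$ factor and explains why MR is the natural scaling for controlling the gradient norm. I do not anticipate a genuine obstacle: the hypotheses $\vnorm{\HH\bgg}\neq 0$ and $\vnorm{\bgg}\neq 0$ guarantee that $\sMR$ and $\theta$ are well defined, and every step is either the triangle inequality, the stated smoothness bound, or an exact algebraic identity. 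The only point requiring care is to retain the remainder as $(L_2/2)\vnorm{\pp}^2$ rather than re-expanding $\vnorm{\pp} = \sMR\vnorm{\bgg}$, so that the final bound matches the stated form.
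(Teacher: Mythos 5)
Your proposal is correct and follows essentially the same route as the paper's proof: the identical add-and-subtract decomposition with the triangle inequality, the bound $\vnorm{\bgg(\xx+\pp)-\bgg-\HH\pp}\leq (L_2/2)\vnorm{\pp}^2$ from \cref{eqn:gradient upper bound}, and the direct expansion of $\vnorm{\bgg-\sMR\HH\bgg}^2$ using \cref{eqn:MR scaling defn} to obtain $\vnorm{\bgg}^2(1-\cos^2\theta)$. Your added observation that $\sMR\HH\bgg$ is the orthogonal projection of $\bgg$ onto $\mathrm{span}(\HH\bgg)$ is a nice geometric gloss on the same computation, not a different argument.
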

\begin{proof}
    Adding and subtracting appropriate values and applying the triangle inequality we have
    \begin{align*}
    \vnorm{\bgg(\xx + \pp)} &= \vnorm{\bgg(\xx + \pp) + \bgg + \HH \pp - \bgg - \HH \pp } \\
    &\leq \vnorm{\bgg(\xx + \pp) - \bgg - \HH \pp} + \vnorm{\bgg + \HH \pp} \\
    &= \vnorm{\bgg(\xx + \pp) - \bgg - \HH \pp} + \vnorm{\rr}.
    \end{align*}
    Now, \cref{ass:Hessian smoothness condition,eqn:gradient upper bound} gives
    \begin{align*}
        \vnorm{\bgg(\xx + \pp)} \leq \frac{L_2}{2}\vnorm{\pp}^2 + \vnorm{\rr}.
    \end{align*}
    We can handle the residual term using the properties of MR scaling. Indeed, applying the definition of $\sMR$ for $\HH\bgg \neq 0$ we have
    \begin{align*}
        \vnorm{\rr}^2 = \vnorm{\bgg - \sMR \HH \bgg}^2 &= \vnorm{\bgg}^2 - 2\sMR \dotprod{\bgg, \HH \bgg} + (\sMR)^2\vnorm{\HH \bgg}^2 \\
        &= \vnorm{\bgg}^2 - 2\frac{\dotprod{\bgg, \HH \bgg}^2 }{\vnorm{\HH \bgg}^2 } + \frac{\dotprod{\bgg, \HH \bgg}^2 }{ \vnorm{\HH \bgg}^2 } \\
        &= \vnorm{\bgg}^2\left( 1 - \frac{\dotprod{\bgg, \HH \bgg}^2 }{\vnorm{\HH \bgg}^2 \vnorm{\bgg}^2 } \right) \\
        &= \vnorm{\bgg}^2\left( 1 - \cos^2{\theta} \right),
    \end{align*}
    Putting this all together yields
    \begin{align*}
        \vnorm{\bgg(\xx + \pp)} &\leq \frac{L_2}{2}\vnorm{\pp}^2 + \sqrt{1 - \cos^2{\theta}} \vnorm{\bgg}.
    \end{align*}
    which is \cref{eqn:MR case upper bound}.
\end{proof}

To apply \cref{eqn:MR case upper bound} for convergence in gradient norm, the magnitude of $(\sMR)^2$ needs to be bounded from above and $\cos{\theta}$ must remain bounded away from zero. To that end, we introduce \cref{ass:MR local assumption}, which excludes cases where the MR scaling becomes unbounded and $\theta \approx \pi/2$, meaning $\HH\bgg$ and $\bgg$ become nearly orthogonal.

\begin{assumption}\label{ass:MR local assumption}
Given $\xx_0$, there exists constants $\nu_{0} > 0$ and $\mu_{0} > 0$, possibly depending on $\xx_0$, such that for all the iterates of the form $\xx_{k+1} = \xx_k - \sMR_{k} \bgg_k$, $k \geq 0$, with $\bgg_k \neq 0$ we have 
\begin{align*}
    \mu_0\vnorm{\bggk}^2 &\leq | \dotprod{ \bggk, \HHk \bggk} |, \tageq\label{eqn:MR curvature absolute lower bound} \\
    \nu_{0} &\leq \cos^2{\theta_{k}}. \tageq\label{eqn:cosine lower bound}
\end{align*} 
\end{assumption}
 
\begin{remark}
    The conditions in \cref{ass:MR local assumption} are general and somewhat unconventional. However, as we shall see they are a weakening of more typical conditions. 
    Indeed, any smooth and strongly convex function on the set containing the iterates satisfies \cref{ass:MR local assumption}. 
    Furthermore, as we shall see in the proof of \cref{thm:local convergence MR second-order sufficient}, the second order sufficient conditions subsume \cref{ass:MR local assumption} for $\xx_0$ close enough to a minima. 
    The gradient curvature condition, \cref{eqn:MR curvature absolute lower bound}, is a weakening of strong convexity, as the curvature along gradient direction need only be uniformly bounded away from zero (instead of uniformly positive). 
    Applying the Cauchy-Schwarz inequality to \cref{eqn:MR curvature absolute lower bound} we see that $\vnorm{\bgg_k} > 0$ implies $\vnorm{\HH_k \bgg_k} > 0$, which ensures that $\cos{\theta_k}$ and $\sMR_k$ remain well defined along the path of the iterates.
    On the other hand, the cosine condition \cref{eqn:cosine lower bound} is implied by \cref{eqn:MR curvature absolute lower bound} along with a bound of the form $\vnorm{\HH_k \bgg_k} \leq L \vnorm{\bgg_k}$.
    However, \cref{eqn:cosine lower bound} is also more generally applicable. For example, when $d=1$, \cref{eqn:cosine lower bound} holds with $\cos^2{\theta}=1$, so long as $\HH \neq 0$. Another example, is $f(\xx) = \vnorm{\xx}^3/6$ where, again, one can show $\cos^2{\theta}=1$. 
\end{remark}

An immediate consequence of \cref{eqn:MR curvature absolute lower bound} and $\vnorm{\HH_k\bgg_k} \geq \abs{\dotprod{\bgg_k, \HH_k \bgg_k}}/\vnorm{\bgg_k}$ is that
\begin{align*}
    (\sMR_k)^2 = \frac{(\dotprod{\bgg_k, \HH_k \bgg_k})^2 }{ \vnorm{\HH_k \bgg_k}^4} \leq \frac{\vnorm{\bgg_k}^4}{(\dotprod{\bgg_k, \HH_k \bgg_k})^2} \leq \frac{1}{\mu_0^2}.
\end{align*}
Therefore by applying \cref{ass:MR local assumption,eqn:MR case upper bound}, we get  
\begin{align*}
    \vnorm{\bgg(\xx + \pp)} \leq \left(\frac{L_2}{2\mu_{0}^2}\vnorm{\bgg} + \sqrt{ 1 - \nu_{0}} \right)\vnorm{\bgg}. \tageq\label{eqn:MR case gradient recursion}
\end{align*}
The recursion in the gradient norm, \cref{eqn:MR case gradient recursion}, is linear-quadratic in a manner similar to the results obtained for Newton-MR method applied to invex problems; see \cite{roostaNewtonMRInexactNewton2022}. 
It follows directly from \cref{eqn:MR case gradient recursion} that, when the gradient is small enough, i.e., 
\begin{align*}
    \vnorm{\bgg} < G_{0} \defeq 2 \mu_{0}^2\left(1 - \sqrt{1 - \nu_{0}} \right)/L_2, \tageq\label{eqn:G_0}
\end{align*}
the gradient norm decreases monotonically. Naturally, once the gradient hits this monotonic phase, every subsequent iterate will also satisfy $\vnorm{\bgg}\leq G_{0}$. These properties are all formalized to a rate in \cref{prop:gradient norm recursion}.

\begin{proposition}[MR Scaling Convergence Gradient Norm] \label{prop:gradient norm recursion}
    Consider \cref{ass:Hessian smoothness condition}, and iterations of the form $\xx_{k+1} = \xx_k - \sMR_{k} \bgg_k$ starting from $\xx_0$ for which $\vnorm{\bgg(\xx_0)} < G_{0}$ and \cref{ass:MR local assumption} holds. For all $k \geq 1$, we have  $\vnorm{\bgg_k} < \vnorm{\bgg_{k-1}}$ and 
    \begin{align*}
        \vnorm{\bgg_k} \leq \frac{G_{0} }{G_{0} - \vnorm{\bgg_0}} \left( 1 -  \frac{1 - \sqrt{1 - \nu_{0}}}{2 - \sqrt{1 - \nu_{0}}} \right)^k \vnorm{\bgg_0}.
    \end{align*}
\end{proposition}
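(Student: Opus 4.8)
The plan is to treat the inequality as a scalar linear--quadratic recursion and to find a change of variables that linearizes it. I would start from the recursion already established in \cref{eqn:MR case gradient recursion}, namely $\vnorm{\bgg_{k+1}} \leq q\,\vnorm{\bgg_k} + \tfrac{L_2}{2\mu_{0}^2}\vnorm{\bgg_k}^2$ with $q \defeq \sqrt{1-\nu_{0}}$, and rewrite its quadratic coefficient using the definition of $G_{0}$ in \cref{eqn:G_0}, i.e. $\tfrac{L_2}{2\mu_{0}^2} = (1-q)/G_{0}$. Introducing the normalized iterate $t_k \defeq \vnorm{\bgg_k}/G_{0}$ then turns the recursion into
\begin{align*}
    t_{k+1} \leq t_k\bigl(q + (1-q)t_k\bigr),
\end{align*}
and the hypothesis $\vnorm{\bgg_0} < G_{0}$ reads $t_0 < 1$.

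First I would establish the monotone decrease by induction. Since $t_k < 1$ gives $q + (1-q)t_k < q + (1-q) = 1$, the displayed recursion yields $t_{k+1} < t_k < 1$; as $t_0 < 1$ by assumption, this propagates to all $k$, proving $\vnorm{\bgg_k} < \vnorm{\bgg_{k-1}}$ and keeping every $t_k$ in $(0,1)$, which in turn guarantees $\cos\theta_k$ and $\sMR_k$ stay well defined along the path.

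The core of the argument is to exhibit a potential that contracts geometrically. I would track $\phi_k \defeq t_k/(1-t_k)$ and claim $\phi_{k+1} \leq \rho\,\phi_k$ with $\rho \defeq 1/(2-q) = 1 - (1-q)/(2-q)$. Because $x \mapsto x/(1-x)$ is increasing on $(0,1)$ and the recursion gives $t_{k+1} \leq t_k r$ with $r \defeq q + (1-q)t_k < 1$, it suffices to check $\tfrac{t_k r}{1-t_k r} \leq \rho\,\tfrac{t_k}{1-t_k}$, equivalently $r(1-t_k) \leq \rho(1 - t_k r)$. Substituting $r$ and $\rho$ and clearing the positive factor $(1-q)$, this inequality collapses to $-(1-t_k)^2 \leq 0$, which holds trivially.

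Finally I would telescope $\phi_k \leq \rho^k \phi_0$ and use the elementary bound $t_k \leq \phi_k$ (valid since $t_k \in (0,1)$) to obtain $t_k \leq \rho^k\,t_0/(1-t_0)$; multiplying through by $G_{0}$ and unwinding $t_0 = \vnorm{\bgg_0}/G_{0}$ together with $\rho = 1 - (1-\sqrt{1-\nu_{0}})/(2-\sqrt{1-\nu_{0}})$ reproduces the stated bound. The main obstacle is purely in finding the right linearizing substitution $\phi_k = t_k/(1-t_k)$: with the naive estimate $q + (1-q)t_j \leq q + (1-q)t_0$ the telescoped product only yields the $t_0$-dependent rate $q + (1-q)t_0$, which fails to match the claimed $t_0$-independent factor $1/(2-q)$. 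The potential $\phi_k$ is precisely what converts the quadratic self-improvement of the iterates into the sharper geometric rate, and verifying its contraction is the one computation that requires care.
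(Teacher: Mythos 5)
Your proof is correct and is essentially the paper's own argument in different variables: your potential $\phi_k = t_k/(1-t_k)$ is exactly the reciprocal of the quantity $q/a_k - 1$ that the paper (following Nesterov's Theorem 1.2.4) shows grows geometrically by the factor $2-\sqrt{1-\nu_{0}} = 1/\rho$, and your key contraction check $r(1-t_k)\leq \rho(1-t_kr)$ reduces to the same nonnegative slack $(1-\sqrt{1-\nu_{0}})^2(1-t_k)^2$ as the paper's step $1-(a_k-q)^2\leq 1$ (note the cleared factor is $(1-q)^2$, not $(1-q)$, a harmless slip). Both routes then discard the same lower-order term --- your bound $t_k\leq\phi_k$ corresponds to the paper dropping $\vnorm{\bgg_0}$ from its denominator --- to arrive at the identical final estimate.
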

\begin{proof}
Since \cref{ass:MR local assumption} holds with $\xx_{0}$ and $\vnorm{\bgg(\xx_0)}< G_{0}$, \cref{eqn:MR case gradient recursion} implies that $\vnorm{\bgg_k} \leq \vnorm{\bgg_{k-1}} < G_{0}$ for $k \geq 1$. To obtain a quantitative rate, we follow a similar line of reasoning as \citep[Theorem 1.2.4]{nesterovIntroductoryLecturesConvex2004}. Let $a_k = \vnorm{\bgg_k} L_2/(2\mu_{0}^2) $ and $q = 1 - \sqrt{1 - \nu_{0}}$. We have $a_k < q$ and
\begin{align*}
    a_{k+1} \leq a_k^2 + \sqrt{1- \nu_{0}} a_k = (a_k + \sqrt{1 - \nu_{0}}) a_k = (1 + a_k - q) a_k.
\end{align*}
Also, since $\alpha_k - q < 0$ by \cref{eqn:G_0},
\begin{align*}
    a_k(1 + a_k - q)  = \left(\frac{(1 - (a_k - q))(1+(a_k-q))}{(1 - (a_k -q))}\right) a_k  = \left(\frac{(1 - (a_k -q)^2)}{1 - (a_k - q)}\right) a_k  \leq \frac{a_k}{1+ q - a_k},
\end{align*}
where the inequality follows from $(1 - (a_k -q)^2)\leq 1$. Hence,  
\begin{align*}
    \frac{q}{a_{k+1}} - 1 \geq (1 + q) \left(\frac{q}{a_k} - 1 \right) .
\end{align*}
Applying this iteratively gives
\begin{align*}
    \frac{q}{a_k} - 1 \geq (1+q)^k \left(\frac{q}{a_0} - 1\right) = (1+q)^k \left(\frac{2\mu_{0}^2(1 - \sqrt{1-\nu_{0}})}{L_2\vnorm{\bgg_0}} - 1\right) = (1 + q)^k \left( \frac{G_{0}}{\vnorm{\bgg_0}} - 1 \right).
\end{align*}
Rearranging, we obtain 
\begin{align*}
    a_k &\leq \frac{q \vnorm{\bgg_0} }{(1 + q)^k(G_{0} - \vnorm{\bgg_0}) + \vnorm{\bgg_0}}.
\end{align*}
Finally, by substituting in the definition of $a_k$ and $q$, we obtain
\begin{align*}
    \vnorm{\bgg_k} &\leq  \frac{G_{0} \vnorm{\bgg_0}}{(2 - \sqrt{1 - \nu_{0}})^k(G_{0} - \vnorm{\bgg_0})} = \frac{G_{0} }{G_{0} - \vnorm{\bgg_0}} \left( 1 -  \frac{1 - \sqrt{1 - \nu_{0}}}{2 - \sqrt{1 - \nu_{0}}} \right)^k \vnorm{\bgg_0}.
\end{align*}
\end{proof}

Finally, we prove \cref{thm:local convergence MR second-order sufficient} by specializing to the case where the second order sufficient conditions hold.

\begin{proof}[Proof of \cref{thm:local convergence MR second-order sufficient}]
    It suffices to find a region where the conditions of \cref{prop:gradient norm recursion} are satisfied, that the iterates will not leave. Similar to the proof of \cref{thm:local convergence}, when the second order sufficient conditions hold, there exists a ball $\sB_r^\star$ such that \cref{eqn:second-order sufficient conditions} holds. This in turn implies \cref{eqn:gradient local bound}, as well as \cref{eqn:MR curvature absolute lower bound,eqn:cosine lower bound} with $\mu_{0} = \mu$ and $\nu_{0} = \mu^2/M^2$, respectively, for any  $\xx \in \sB_r^\star$. With these $\mu_{0}$ and $\nu_{0}$ in hand, consider $G_{0}$ in \cref{eqn:G_0} and define  
    \begin{align*}
        \sG = \left\{ \xx \mid 0 < \vnorm{\bgg(\xx)} \leq \min\left\{G_{0}, \frac{r \mu}{M \mu + 1} \right\} \right\}.
    \end{align*}
    Suppose $\xx_k \in \sB_r^\star \cap \sG$ (such an $\xx_k$ exists by the RHS of \cref{eqn:gradient local bound}). From \cref{eqn:MR case gradient recursion}, we get $\vnorm{\bgg_{k+1}} < \vnorm{\bgg_k}$, which in turn implies $\xx_{k+1} \in \sG$. Applying the upper bound in \cref{eqn:gradient local bound}, $\sMR_{k} \leq 1/\mu$ and $\xx_k\in \sG$, gives 
    \begin{align*}
        \vnorm{\xx_{k+1} - \xx^{\star}} &\leq \vnorm{\xx_k - \xx^{\star}} + \abs{\sMR_{k}}\vnorm{\bgg_k} 
        \leq \left(M + \frac{1}{\mu} \right) \vnorm{\bgg_k} 
        \leq r.
    \end{align*}
    That is, $\xx_{k+1} \in \sB_r^\star \cap \sG$. Hence, \cref{ass:MR local assumption} holds for all of the iterates if $\xx_0 \in \sB_r^\star \cap \sG$. \cref{prop:gradient norm recursion} can be applied to obtain the convergence rate. 
\end{proof}

Finally, we remark that the iteration $\xx_{k+1} = \xx_k - \sMR_k\bgg_k$, as applied in \cref{thm:local convergence MR second-order sufficient}, can occur as a special case of \cref{alg:scaled gradient}. In particular, if $\sigma$ is chosen $\sigma < \mu $ then the $\SPC$ scaling (in particular, the MR scaling) will always be selected. Moreover, if $\xx_0$ is chosen sufficiently close to $\xx^\star$, \cref{eqn:gradient line search termination} will hold; implying that $\alpha=1$ will be acceptable to the line search. Indeed, \cref{eqn:gradient line search termination} will be satisfied for all future iterations by monotonicity of the gradient norm.

\subsection{Inexact Hessian} \label{apx:inexact Hessian}

\subsubsection{Statement of Results}

We now consider the case where the Hessian can only be accessed through an inexact estimate $\HHt$. Specifically, we demonstrate that under certain conditions on the inexactness tolerance, we can attain similar unit step size and convergence guarantees as in the exact case. 
For clarity, we defer the proofs of our results to the end of this section. We begin with a condition on the allowable inexactness in the Hessian.
\begin{assumption}[Hessian Error Bound] \label{ass:Hessian inexactness bound}
    For a given $\Delta_H > 0$ and $\xx$, we can produce $\HHt$ such that
    \begin{align*}
        \abs{\dotprod{\bgg(\xx), (\HH(\xx) - \HHt(\xx))\bgg(\xx)}} \leq \Delta_H \vnorm{\bgg(\xx)}^2. \tageq\label{eqn:Hessian error bound}
    \end{align*}
\end{assumption}
This condition requires only that the inexactness is bounded along the gradient direction, a weaker requirement than a direct bound on the Hessian error, $\HH(\xx) - \HHt(\xx)$.
To ground \cref{ass:Hessian inexactness bound}, consider an objective which can be written as a finite-sum
\begin{align*}
    f(\xx) \defeq \frac{1}{n}\sum_{i=1}^n f_i(\xx).
\end{align*}
This is a formulation which arises often in machine learning as part of the empirical risk minimization framework \cite{murphyMachineLearningProbabilistic2012}. In the ``big data'' regime, where $n \gg 1$, optimization algorithms often use sub-sampling to estimate gradients or Hessians. Suppose we estimate the Hessian at each iteration via
\begin{align*}
    \HHt(\xx) = \frac{1}{|\sI_H|} \sum_{i \in \sI_H} \HH_i(\xx), \tageq\label{eqn:Hessian subsampling}
\end{align*}
where $\sI_H \subseteq \{1, \ldots, n\}$ is a minibatch of indices uniformly, with replacement. In this case, \cref{ass:Hessian inexactness bound} holds with high probability if the sample size is sufficiently large and the individual Hessians, $\HH_i$, are uniformly bounded along the gradient direction (see \cref{sec:subsampled hessian} for details). 

When the estimated Hessian $\HHt$ replaces $\HH$ in \cref{alg:scaling selection}, the curvature tests and resulting scalings (denoted by $\st$) depend on $\HHt$ rather than $\HH$. Fortunately, many of the key properties from the deterministic case still hold for inexact Hessian. Indeed, $\SPC$ steps satisfy $\sigma \vnorm{\bgg}^2 < \dotprod{\bgg, \HHt \bgg}$, $\LPC$ steps satisfy $0 \leq \dotprod{\bgg, \HHt \bgg} \leq \sigma \vnorm{\bgg}^2$ and $\NC$ steps satisfy $\dotprod{\bgg, \HHt \bgg} < 0$. The second order descent condition \cref{eqn:second-order descent} from \cref{prop:second-order descent} continues to hold with respect to $\HHt$, that is, for $\ppt = -\st \bgg$
\begin{align*}
    \dotprod{\bgg, \ppt} + \dotprod{\ppt, \HHt \ppt} \leq 0 \tageq\label{eqn:inexact second order descent}
\end{align*}
The upper bounds from \cref{prop:scaling upper bounds} also continue to hold, as they arise due to the curvature test. The search direction, $\ppt$, remains collinear to to the gradient, so that $\dotprod{\bgg, \ppt} = - \vnorm{\bgg} \vnorm{\ppt}$ and \cref{ass:Hessian smoothness condition} can be applied along $\ppt$ to ensure that \cref{eqn:function value upper bound} holds. The Armijo condition for the line search is given by 
\begin{align}
    \label{eqn:inexact line search}
    f(\xx + \alpha \ppt) \leq f(\xx) + \alpha \rho \dotprod{\bgg, \ppt},
\end{align}
for $\rho \in (0, 1/2)$. With these ideas in hand we can derive results reminiscent of \cref{sec:global} for inexact case. Beginning with a unit step size guarantee that applies when $\Delta_H$ is sufficiently small.

\begin{proposition} \label{prop:inexact unit step size acceptance}
    Consider \cref{ass:Hessian smoothness condition} and \cref{ass:Hessian inexactness bound} with $\Delta_H \leq 2 \min\left\{ \sigma\left( {1}/{2} - \rho\right), {(1-\rho)}/{\sNC_{\tmax}}\right\}$. 
    If 
    \begin{align*}
        \vnorm{\bgg} \leq  6 \min\left\{\frac{\sigma^2 \left(\frac{1}{2} - \rho - \frac{\Delta_H}{2\sigma} \right)}{L_2 },   \frac{\left( 1 - \rho - \frac{\sNC_{\tmax}\Delta_H}{2} \right) }{L_2 (\sNC_{\tmax})^2}\right\},
    \end{align*}
    then the Armijo condition \cref{eqn:inexact line search} is satisfied with $\alpha=1$.
\end{proposition}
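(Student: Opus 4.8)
The plan is to mirror the proof of \cref{thm:unit step size acceptance} for the exact case, splitting into the non-negative and negative curvature regimes according to the sign of $\dotprod{\bgg, \HHt \bgg}$, while bridging the mismatch between the \emph{true} Hessian appearing in the cubic upper bound \cref{eqn:function value upper bound} and the \emph{inexact} Hessian $\HHt$ used both to choose the scaling $\st$ and to certify second-order descent \cref{eqn:inexact second order descent}. First I would apply \cref{eqn:function value upper bound} along $\ppt = -\st\bgg$ (valid since $\ppt$ is collinear with $\bgg$, so \cref{ass:Hessian smoothness condition} applies), producing an expansion whose quadratic term is $\tfrac{\alpha^2}{2}\dotprod{\ppt, \HH \ppt}$. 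The key step is to decompose $\dotprod{\ppt, \HH \ppt} = \dotprod{\ppt, \HHt \ppt} + \dotprod{\ppt, (\HH - \HHt)\ppt}$ and, using collinearity once more, observe that $\dotprod{\ppt, (\HH - \HHt)\ppt} = (\st)^2 \dotprod{\bgg, (\HH - \HHt)\bgg}$, which \cref{ass:Hessian inexactness bound} bounds by $\Delta_H (\st)^2 \vnorm{\bgg}^2 = \Delta_H \vnorm{\ppt}^2$. This converts the inexactness into a single controlled extra term $\tfrac{\alpha^2}{2}\Delta_H\vnorm{\ppt}^2$ in the expansion.

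In the non-negative curvature case ($\dotprod{\bgg, \HHt \bgg} \geq 0$, so an $\SPC$ or $\LPC$ scaling is used and $\st \leq 1/\sigma$ by \cref{prop:scaling upper bounds}), I would proceed exactly as in the exact proof: use $\alpha^2 \leq \alpha$ together with $\dotprod{\ppt, \HHt\ppt}\geq 0$, split off a factor $\tfrac{\alpha}{2}\dotprod{\bgg,\ppt}$, and apply the inexact second-order descent \cref{eqn:inexact second order descent} to cancel the pair $\dotprod{\bgg,\ppt}+\dotprod{\ppt,\HHt\ppt}$. After subtracting the Armijo right-hand side $\alpha\rho\dotprod{\ppt,\bgg}$ and factoring out $\alpha\vnorm{\ppt}\vnorm{\bgg}$, one is left with the sign of a bracket of the form $-(\tfrac12-\rho) + \tfrac{\alpha}{2}\Delta_H\st + \tfrac{L_2\alpha^2}{6}\st\vnorm{\ppt}$; bounding $\st \leq 1/\sigma$ and setting $\alpha = 1$ shows this bracket is non-positive precisely when $\vnorm{\ppt} \leq 6\sigma(\tfrac12 - \rho - \Delta_H/(2\sigma))/L_2$, and then $\vnorm{\ppt}\leq \vnorm{\bgg}/\sigma$ turns this into the first term of the stated minimum.

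In the negative curvature case ($\dotprod{\bgg, \HHt \bgg} < 0$, so the $\NC$ scaling is used and $\st \leq \sNC_{\tmax}$), I would drop the now-negative term $\dotprod{\ppt,\HHt\ppt}$ entirely, as in the exact proof, retaining only the error term $\tfrac{\alpha^2}{2}\Delta_H\vnorm{\ppt}^2$ from the decomposition. The coefficient on $\dotprod{\ppt,\bgg}$ is then $(1-\rho)$ rather than $(\tfrac12-\rho)$, and the analogous bracket $-(1-\rho) + \tfrac{\alpha}{2}\Delta_H\st + \tfrac{L_2\alpha^2}{6}\st\vnorm{\ppt}$, bounded via $\st \leq \sNC_{\tmax}$ and evaluated at $\alpha=1$, is non-positive when $\vnorm{\ppt}\leq 6(1-\rho-\sNC_{\tmax}\Delta_H/2)/(L_2\sNC_{\tmax})$; using $\vnorm{\ppt}\leq \sNC_{\tmax}\vnorm{\bgg}$ then yields the second term of the minimum. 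Taking the minimum over the two regimes gives the claim.

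The main obstacle, and the reason for the hypothesis $\Delta_H \leq 2\min\{\sigma(\tfrac12-\rho),\, (1-\rho)/\sNC_{\tmax}\}$, is that the extra error term $\tfrac{\alpha^2}{2}\Delta_H\vnorm{\ppt}^2$ erodes the descent margin: the effective constants $\tfrac12-\rho-\Delta_H/(2\sigma)$ and $1-\rho-\sNC_{\tmax}\Delta_H/2$ must remain non-negative for the resulting gradient-norm bounds to be meaningful, which is exactly what this tolerance condition guarantees. Beyond this bookkeeping, every remaining step is a routine repetition of the exact-case estimates with $\HH$ replaced by $\HHt$ plus the controlled perturbation, so the only genuinely new ingredient is the use of \cref{ass:Hessian inexactness bound} to absorb the Hessian error along the gradient direction.
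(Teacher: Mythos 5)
Your proposal is correct and follows essentially the same route as the paper's proof: the identical decomposition $\dotprod{\ppt,\HH\ppt} = \dotprod{\ppt,\HHt\ppt} + \dotprod{\ppt,(\HH-\HHt)\ppt}$ with collinearity converting the error to $\Delta_H\vnorm{\ppt}^2$, the same case split on the sign of $\dotprod{\bgg,\HHt\bgg}$, the same cancellation via \cref{eqn:inexact second order descent} with $\alpha \leq 1$ in the non-negative case and dropping of the curvature term in the negative case, and the same scaling bounds $\st \leq 1/\sigma$ and $\st \leq \sNC_{\tmax}$ to pass from $\vnorm{\ppt}$ to $\vnorm{\bgg}$. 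You also correctly identify that the hypothesis on $\Delta_H$ exists precisely to keep the eroded descent margins non-negative, matching the paper's nontriviality conditions.
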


Next we consider a global convergence guarantee for the inexact case. Much like the analysis in the exact case, this requires an additional smoothness condition, however this time the condition is based on the inexact Hessian.
\begin{assumption}[Inexact Hessian-gradient Smoothness] 
    \label{ass:inexact Hessian gradient smoothness condition}
    There exists $0 \leq \Lt_1< \infty$ such that for all $\xx \in \real^d$, we have $\|\HHt(\xx) \bgg(\xx)\| \leq \Lt_1 \vnorm{\bgg(\xx)}$.
\end{assumption}
Much like the exact Hessian case, \cref{ass:inexact Hessian gradient smoothness condition} relaxes the typical Lipschitz gradient assumption used for global convergence in gradient descent. Returning to the sub-sampled Hessian, \cref{eqn:Hessian subsampling}, \cref{ass:inexact Hessian gradient smoothness condition} clearly holds if the individual Hessians, $\HH_i$, are uniformly bounded along the gradient direction. 

Much like the the exact Hessian case (compare with \cref{lemma:scaling lower bounds}), under \cref{ass:inexact Hessian gradient smoothness condition} we have a lower bound on the scaling magnitudes 
\begin{align*}
    \frac{\sigma}{\Lt_1^2} \leq \sMR \leq \sGM \leq \sCG. \tageq\label{eqn:inexact step size lower bound}
\end{align*}
We now state the global convergence result.
\begin{proposition} \label{prop:inexact global convergence}
    Consider \cref{ass:Hessian smoothness condition}, \cref{ass:Hessian inexactness bound} for any $\Delta_H \geq 0$, and \cref{ass:inexact Hessian gradient smoothness condition}. Suppose $f$ is lower bounded. For any $0<\eg < 1$, after at most $K \in \bigO{\eg^{-2}}$  iterations of \cref{alg:scaled gradient}, with $\HH$ replaced with $\HHt$, we have $\vnorm{\bgg_k} \leq \varepsilon_{\bgg}$ for some $0\leq k\leq K$.
\end{proposition}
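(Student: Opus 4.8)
The plan is to mirror the proof of \cref{prop:global convergence_detailed}, replacing $\HH$ by $\HHt$ throughout and carefully absorbing the extra error terms produced by \cref{ass:Hessian inexactness bound}. The crucial observation, which explains why the statement requires no smallness of $\Delta_H$ (in contrast to the unit-step guarantee of \cref{prop:inexact unit step size acceptance}), is that $\ppt$ remains collinear with $-\bgg$ and is therefore always a descent direction; the argument rests only on the line search together with the scaling bounds, so the inexactness merely perturbs the constants in the per-iteration decrease and not the $\bigO{\eg^{-2}}$ rate.

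First I would establish an inexact analog of the per-iteration descent \cref{lemma:per iteration decrease}, partitioning $\sK=\{0,\dots,K-1\}$ into disjoint sets $\sK_{\text{NC}}$, $\sK_{\text{LPC}}$, $\sK_{\text{SPC}}$ according to the sign and size of $\dotprod{\bgg,\HHt\bgg}$. Starting from \cref{eqn:function value upper bound} (valid along $\ppt$ since \cref{ass:Hessian smoothness condition} concerns the true Hessian and $\ppt$ is parallel to $\bgg$), I would write $\dotprod{\ppt,\HH\ppt}=\dotprod{\ppt,\HHt\ppt}+\dotprod{\ppt,(\HH-\HHt)\ppt}$ and bound the error using $\ppt=-\st\bgg$ and \cref{eqn:Hessian error bound} to get $\dotprod{\ppt,(\HH-\HHt)\ppt}\le\Delta_H\vnorm{\ppt}^2$. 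Together with the inexact second-order descent \cref{eqn:inexact second order descent} and the still-valid scaling upper bounds from \cref{prop:scaling upper bounds}, this reproduces the computation in the proof of \cref{prop:inexact unit step size acceptance} while keeping $\alpha$ free, yielding $\Delta_H$-perturbed analogs of the termination thresholds \cref{eqn:positive curvature line search termination} and \cref{eqn:NC case line search termination}. In each thresholds I would split the $(\tfrac12-\rho)$ (resp. $(1-\rho)$) "descent budget" across the linear $\Delta_H$ term and the cubic $L_2$ term; this shows the Armijo condition \cref{eqn:inexact line search} holds for all sufficiently small $\alpha>0$, so backtracking/forward tracking returns an accepted step bounded away from zero, and the Armijo inequality $f(\xx)-f(\xx+\alpha\ppt)\ge\rho\alpha\st\vnorm{\bgg}^2$ delivers a strictly positive per-iteration decrease in every regime.

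In the $\SPC$ case with $\alpha=1$ — the only regime whose decrease would otherwise depend on the scaling magnitude — I would invoke the inexact scaling lower bound \cref{eqn:inexact step size lower bound} (a consequence of \cref{ass:inexact Hessian gradient smoothness condition}) to get $\st\ge\sigma/\Lt_1^2$ and hence a decrease of order $\vnorm{\bgg}^2$; the complementary $\SPC$ sub-case, in which the step norm is large and the $\HHt$-curvature test $\dotprod{\ppt,\HHt\ppt}>\sigma\vnorm{\ppt}^2$ forces $\dotprod{\bgg,\ppt}\le-\sigma\vnorm{\ppt}^2$ via \cref{eqn:inexact second order descent}, is handled exactly as in the exact proof and gives a gradient-independent decrease. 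Using $\vnorm{\bgg_k}>\eg$ and $\eg<1$ on each index set to reduce the resulting $\vnorm{\bgg}^{3/2}$ and $\vnorm{\bgg}^2$ bounds to a common constant multiple of $\eg^2$, I would telescope $f(\xx_0)-f(\xx_K)=\sum_k f(\xx_k)-f(\xx_{k+1})$ over the three sets, compare against the lower bound on $f$, and conclude by contradiction that $K\in\bigO{\eg^{-2}}$.

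The main obstacle is the negative-curvature analysis under arbitrary $\Delta_H\ge0$. Unlike the exact case, the perturbation contributes a term linear in $\alpha$, namely $\tfrac{\alpha}{2}\Delta_H\st\vnorm{\ppt}^2$, so the Armijo-satisfaction condition is a genuine quadratic in $\alpha$ rather than the pure quadratic of \cref{eqn:NC case line search termination}, and the forward-tracking search admits unbounded steps. I would control this by showing the condition holds whenever $\alpha$ lies below the minimum of a $\Delta_H$-dependent constant and a $\vnorm{\ppt}^{-1/2}$-type threshold; forward tracking then returns a step at least $\theta$ times this minimum, and each branch yields a per-iteration decrease that is at least a constant times $\eg^2$ (the $\vnorm{\bgg}^{3/2}$ branch since $\eg^{3/2}>\eg^2$, and the $\Delta_H$ branch since it is already proportional to $\vnorm{\bgg}^2$, propagated through $\vnorm{\ppt}\ge\sNC_{\tmin}\vnorm{\bgg}$). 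A secondary point is confirming that \cref{prop:scaling upper bounds} and \cref{eqn:inexact step size lower bound} survive verbatim with the curvature tests taken against $\HHt$, which holds because those tests and \cref{ass:inexact Hessian gradient smoothness condition} are stated directly in terms of $\HHt$.
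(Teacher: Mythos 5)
Your proposal is correct and follows the same skeleton as the paper's proof: partition the iterations by the sign and size of $\dotprod{\bgg,\HHt\bgg}$, combine \cref{eqn:inexact second order descent} with the error bound $\dotprod{\ppt,(\HH-\HHt)\ppt}\le\Delta_H\vnorm{\ppt}^2$ inside \cref{eqn:function value upper bound}, lower-bound the accepted step in each regime (with \cref{eqn:inexact step size lower bound} supplying the $\SPC$ step-norm bound), and telescope to a contradiction. The two places you diverge are technical rather than structural. First, where you split the $(\tfrac12-\rho)$ or $(1-\rho)$ descent budget between the linear $\Delta_H$ term and the cubic $L_2$ term, the paper instead keeps the full quadratic $C_2\alpha^2+C_1\alpha+C_0\le 0$, computes its positive root $\alpha^\star$ exactly, and lower-bounds it via a small rationalization lemma, $-A+\sqrt{A^2+Bx}\ge \frac{B}{A+\sqrt{A^2+B}}\min\{x,1\}$ (\cref{lemma:technical lower bound}); your budget split is a looser but entirely sufficient bookkeeping of the same inequality, at the cost of slightly worse constants. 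Second, the paper does \emph{not} reproduce the exact-case $\SPC$ dichotomy of \cref{lemma:per iteration decrease} (large $\vnorm{\pp}$ versus ``otherwise $\alpha=1$''): it treats $\SPC$ uniformly via $\alpha\ge\min\{1,\alpha_2^\star\}$ together with $\vnorm{\ppt}\ge(\sigma/\Lt_1^2)\vnorm{\bgg}$, and recovers the gradient-independent large-step decrease only in \cref{remark:remark on inexact global convergence}, under the extra restriction $\Delta_H\le 2\theta\sigma(1/2-\rho)$. That is the one spot where your write-up is loose: for unrestricted $\Delta_H$ the complementary sub-case is not ``$\alpha=1$ is accepted,'' since the linear $\Delta_H$ term can reject the unit step no matter how small $\vnorm{\ppt}$ is. Your own framework already repairs this, however: the accepted step is then bounded below by the $\Delta_H$-dependent constant branch of your minimum, and the resulting decrease $\rho\alpha\st\vnorm{\bgg}^2 \ge \rho\, C(\Delta_H)\,(\sigma/\Lt_1^2)\,\eg^2$ still yields the claimed $\bigO{\eg^{-2}}$ complexity, so there is no genuine gap.
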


Much like \cref{prop:global convergence}, the rate obtained in \cref{prop:inexact global convergence} matches the worst case rate for gradient descent on a function with Lipschitz smooth gradients. Interestingly, unlike \cref{prop:inexact unit step size acceptance}, \cref{prop:inexact global convergence} imposes no specific requirement on $\Delta_H$. This is because small step sizes (i.e., $\alpha<1$) can mitigate Hessian noise. Imposing a stronger condition on $\Delta_H$ cannot offer an improvement in the overall worst case rate as the inexact rate already matches that of the exact case. However, in \cref{remark:remark on inexact global convergence}, we discuss how bounding $\Delta_H$ can offer an improvement in the per-iteration decrease for certain step types. 

Finally, we remark that when \cref{ass:inexact Hessian gradient smoothness condition} holds only with high probability, the conclusions of \cref{prop:inexact unit step size acceptance} and \cref{prop:inexact global convergence} may also hold only with high probability. A specific issue arises for \cref{prop:inexact global convergence}, as \cref{eqn:Hessian error bound} must hold at \emph{each iteration}. However, by ensuring the probability of \cref{eqn:Hessian error bound} failing is sufficiently small, sufficient decrease over any finite span of iterations can be guaranteed with arbitrarily high probability.

\subsubsection{Proof of Results} 

\begin{proof}[Proof of \cref{prop:inexact unit step size acceptance}]
We begin by adding and subtracting $\dotprod{\ppt, \HHt\ppt}$ to the upper bound in \cref{eqn:function value upper bound} and applying \cref{ass:Hessian inexactness bound}
\begin{align*}
    f(\xx + \alpha \pp) - f(\xx) 
    &\leq \alpha \dotprod{\bgg, \ppt} + \frac{\alpha^2}{2}\dotprod{\ppt, (\HH - \HHt)\ppt} + \frac{\alpha^2}{2}\dotprod{\ppt, \HHt \ppt} + \frac{L_2 \alpha^3}{6} \vnorm{\ppt}^3  \\
    &\leq \alpha \dotprod{\bgg, \ppt} + \frac{\alpha^2\Delta_H }{2} \vnorm{\ppt}^2 + \frac{\alpha^2}{2}\dotprod{\ppt, \HHt \ppt} + \frac{L_2 \alpha^3}{6} \vnorm{\ppt}^3. \tageq\label{eqn:inexact function value upper bound}
\end{align*}
The upper bound in \cref{eqn:inexact function value upper bound} now involves the curvature of the inexact Hessian, $\dotprod{\ppt, \HHt \ppt}$, which we can control with the curvature tests and second order descent condition.
Starting with non-negative curvature case (encompassing $\SPC$ and $\LPC$ scalings) with $\dotprod{\bgg, \HHt \bgg} \geq 0$. Taking \cref{eqn:inexact function value upper bound}, subtracting $\rho \alpha \dotprod{\bgg, \ppt}$ and applying $\alpha \leq 1$ and \cref{eqn:inexact second order descent} we obtain
\begin{align*}
     f(\xx + \alpha \pp) - f(\xx) - \rho \alpha \dotprod{\bgg, \ppt} 
     &\leq \alpha\left(\frac{1}{2} - \rho \right) \dotprod{\bgg, \ppt} + \frac{\alpha}{2} \left(\dotprod{\bgg, \ppt} + \dotprod{\ppt, \HHt \ppt}\right) \\
     &+ \frac{\alpha^2}{2} \Delta_H \vnorm{\ppt}^2  + \frac{L_2 \alpha^3}{6} \vnorm{\ppt}^3 \\
     &\leq -\alpha \left(\frac{1}{2} - \rho\right)\vnorm{\bgg} \vnorm{\ppt} + \frac{\alpha^2}{2} \Delta_H \vnorm{\ppt}^2  + \frac{L_2 \alpha^3}{6} \vnorm{\ppt}^3 \\
     &=\left(-\left(\frac{1}{2} - \rho\right) + \frac{\alpha \st \Delta_H}{2}   + \frac{L_2 \alpha^2 \st }{6} \vnorm{\ppt} \right) \alpha \vnorm{\ppt} \vnorm{\bgg} \\
     &\leq \left(-\left(\frac{1}{2} - \rho\right) + \frac{\alpha \Delta_H}{2\sigma}   + \frac{L_2 \alpha^2  }{6 \sigma } \vnorm{\ppt} \right) \alpha \vnorm{\ppt} \vnorm{\bgg}. \tageq\label{eqn:inexact case positive function upper bound}
\end{align*}
In the final line we apply $\st \leq 1/\sigma$, which holds for both the $\LPC$ and $\SPC$ scalings. The line search criteria \cref{eqn:inexact line search} is clearly satisfied if the upper bound in \cref{eqn:inexact case positive function upper bound} is negative. That is,
\begin{align*}
    \vnorm{\ppt} \leq \frac{6 \sigma}{L_2 \alpha^2}\left(\left(\frac{1}{2} - \rho \right) - \frac{\alpha \Delta_H}{2\sigma} \right).
\end{align*}
This bound is nontrivial if 
\begin{align*}
    \Delta_H \leq \frac{2\sigma}{\alpha}\left( \frac{1}{2} - \rho\right).
\end{align*}
Now we consider the negative curvature case, where$\dotprod{\bgg, \HHt \bgg} <0$. Applying the negative curvature condition to \cref{eqn:inexact function value upper bound} and subtracting $\alpha \rho \dotprod{\bgg, \ppt}$
\begin{align*}
     f(\xx + \alpha \pp) - f(\xx) -\alpha \rho \dotprod{\bgg, \ppt} &\leq \alpha(1-\rho)\dotprod{\bgg, \ppt} + \frac{\alpha^2 \Delta_H }{2}\vnorm{\ppt}^2 + \frac{\alpha^2}{2}\dotprod{\ppt, \HHt \ppt} + \frac{L_2 \alpha^3}{6} \vnorm{\ppt}^3 \\
     &\leq -\alpha(1-\rho)\vnorm{\bgg}\vnorm{\ppt} + \frac{\alpha^2 \Delta_H }{2}\vnorm{\ppt}^2 + \frac{L_2 \alpha^3}{6} \vnorm{\ppt}^3 \\
     &= \left(-(1-\rho)  + \frac{\alpha\sNCt}{2} \Delta_H  + \frac{L_2 \alpha^2\sNCt}{6} \vnorm{\ppt} \right) \alpha \vnorm{\bgg}\vnorm{\ppt} \\
     &\leq \left(-(1-\rho)  + \frac{\alpha\sNC_{\tmax}}{2} \Delta_H  + \frac{L_2 \alpha^2\sNC_{\tmax}}{6} \vnorm{\ppt} \right) \alpha \vnorm{\bgg}\vnorm{\ppt}, \tageq\label{eqn:inexact case negative function upper bound}
\end{align*}
where in the final line we we apply $\sNCt \leq \sNC_{\tmax}$. Again, if the upper bound in \cref{eqn:inexact case negative function upper bound} is negative \cref{eqn:inexact line search} is clearly satisfied. In particular, \cref{eqn:inexact case negative function upper bound} is negative if
\begin{align*}
    \vnorm{\ppt} \leq  \frac{6}{L_2 \sNC_{\tmax} \alpha^2}\left( 1 - \rho - \frac{\alpha \sNC_{\tmax}\Delta_H}{2} \right).
\end{align*}
This bound is nontrivial if 
\begin{align*}
    \Delta_H \leq \frac{2(1-\rho)}{\alpha \sNC_{\tmax}}.
\end{align*}
The result follows from combining the two tolerances on $\Delta_H$, setting $\alpha=1$ and using the fact that $\vnorm{\ppt} \leq \vnorm{\bgg}/\sigma$ in the non-negative curvature case and $\vnorm{\ppt} \leq \sNC_{\tmax} \vnorm{\bgg}$ in the negative curvature case.
\end{proof}

We now move on to a proof of \cref{prop:inexact global convergence}. First we require a technical lemma.
\begin{lemma} \label{lemma:technical lower bound}
    For $A \geq 0$, $x \geq0 $ and $B > 0$ we have 
    \begin{align*}
        -A + \sqrt{A^2 + Bx } \geq \frac{B}{A + \sqrt{A^2 + B}} \min\{x, 1\}.
    \end{align*}
\end{lemma}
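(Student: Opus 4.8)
The plan is to first eliminate the difference of square roots on the left-hand side by rationalizing. Multiplying and dividing by $A + \sqrt{A^2 + Bx}$ yields the identity
\[
    -A + \sqrt{A^2 + Bx} = \frac{Bx}{A + \sqrt{A^2 + Bx}},
\]
which is valid because $B > 0$ and $x \geq 0$ keep the denominator strictly positive. The claimed inequality then becomes
\[
    \frac{Bx}{A + \sqrt{A^2 + Bx}} \geq \frac{B}{A + \sqrt{A^2 + B}}\,\min\{x, 1\},
\]
and the positive factor $B$ cancels from both sides. I would then split the argument according to which branch of $\min\{x,1\}$ is active.

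In the case $x \leq 1$ we have $\min\{x,1\} = x$, so it suffices to show $x/(A + \sqrt{A^2 + Bx}) \geq x/(A + \sqrt{A^2 + B})$. Since $x \leq 1$ gives $Bx \leq B$ and hence $\sqrt{A^2 + Bx} \leq \sqrt{A^2 + B}$, the denominator on the left is no larger than on the right, so the inequality follows immediately (both sides vanish when $x = 0$).

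In the case $x > 1$ we have $\min\{x,1\} = 1$, so the goal reduces to $x/(A + \sqrt{A^2 + Bx}) \geq 1/(A + \sqrt{A^2 + B})$, i.e., after cross-multiplying the positive denominators, $x\left(A + \sqrt{A^2 + B}\right) \geq A + \sqrt{A^2 + Bx}$. I would verify the two contributions separately: $xA \geq A$ because $x \geq 1$ and $A \geq 0$; and $x\sqrt{A^2 + B} \geq \sqrt{A^2 + Bx}$, which follows by squaring, since $x \geq 1$ implies $x^2 \geq x$ and therefore $x^2(A^2 + B) = x^2 A^2 + x^2 B \geq A^2 + Bx$. Adding these two bounds gives the required inequality.

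The argument is entirely elementary, and there is no genuine obstacle once the rationalization is carried out; the only mild subtlety is recognizing that the two branches of $\min\{x,1\}$ must be handled separately, since the rationalized left-hand side is concave in $x$ while the right-hand side is piecewise linear, so a single linear comparison would not suffice across the breakpoint $x = 1$.
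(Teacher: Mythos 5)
Your proof is correct and follows essentially the same route as the paper's: rationalize via the conjugate to obtain $-A+\sqrt{A^2+Bx}=\frac{Bx}{A+\sqrt{A^2+Bx}}$, then split on the branches of $\min\{x,1\}$, with the $x\leq 1$ case handled by the identical denominator comparison. The only (cosmetic) difference is in the $x>1$ case, where you cross-multiply and verify $x\left(A+\sqrt{A^2+B}\right)\geq A+\sqrt{A^2+Bx}$ termwise by squaring, whereas the paper divides numerator and denominator by $x$ and invokes monotonicity of $A/x+\sqrt{A^2/x^2+B/x}$ in $x$ --- two equivalent ways of closing the same step.
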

\begin{proof}
The $x=0$ case is trivial, so take $x >0$. Multiplying the numerator and denominator by a conjugate
\begin{align*}
    -A + \sqrt{A^2 + Bx } &= \frac{(-A + \sqrt{A^2 + Bx})(A + \sqrt{A^2 + Bx})}{A + \sqrt{A^2 + Bx}} \\
    &= \frac{-A^2 + A^2 + Bx}{A + \sqrt{A^2 + Bx}} \\
    &= \frac{Bx}{A + \sqrt{A^2 + Bx}}.
\end{align*}
Now consider two cases. If $x < 1$ then $A + \sqrt{A^2 + Bx} < A + \sqrt{A^2 + B}$ so that 
\begin{align*}
     -A + \sqrt{A^2 + Bx} = \frac{Bx}{A + \sqrt{A^2 + Bx}} \geq \frac{B}{A + \sqrt{A^2 + B}}x.
\end{align*}
On the other hand, if $x \geq 1$, we rearrange to obtain
\begin{align*}
    -A + \sqrt{A^2 + Bx} = \frac{Bx}{A + \sqrt{A^2 + Bx}} = \frac{B}{A/x + \sqrt{A^2/x^2 + B/x}}.
\end{align*}
The denominator of this expression is decreasing in $x$, indeed, we clearly have $A/x + \sqrt{A^2/x^2 + B/x} \leq A + \sqrt{A^2 + B}$ so that
\begin{align*}
    -A + \sqrt{A^2 + B} \geq \frac{B}{A + \sqrt{A^2 + B}}.
\end{align*}
\end{proof}

\begin{proof}[Proof of \cref{prop:inexact global convergence}] 

We begin by re-examining \cref{eqn:inexact case positive function upper bound,eqn:inexact case negative function upper bound} from the proof of \cref{prop:inexact unit step size acceptance}, to demonstrate that there is a positive step size for which \cref{eqn:inexact line search} certainly holds. 
We assume that \cref{alg:scaled gradient} has not terminated, i.e., $\vnorm{\bgg} > \eg$.
Starting with the negative curvature case, consider the upper bound in \cref{eqn:inexact case negative function upper bound}. The line search condition \cref{eqn:inexact line search} is certainly satisfied if
\begin{align*}
    C_2 \alpha^2 + C_1 \alpha + C_0 \leq 0,
\end{align*}
where 
\begin{align*}
    C_2 \defeq \frac{L_2 \sNC_{\tmax} \vnorm{\ppt}}{6}, \quad C_1 \defeq \frac{\sNC_{\tmax} \Delta_H}{2}, \quad C_0 \defeq -(1-\rho).
\end{align*}
This quadratic has two real roots, one positive and one negative. The positive root is given by 
\begin{align*}
    \alpha^\star_1 &= -\frac{C_1}{2C_2} + \sqrt{\frac{C_1^2}{4C_2^2} - \frac{C_0}{C_2}} \\
    &= -\frac{3\Delta_H }{2L_2 \vnorm{\ppt}} + \sqrt{\frac{9 \Delta_H^2}{4 L_2^2 \vnorm{\ppt}^2} + \frac{6(1-\rho)}{L_2 \sNC_{\tmax} \vnorm{\ppt}}},
\end{align*}
which implies that the largest step size for which \cref{eqn:inexact line search} is satisfied also must satisfy $\alpha \geq \alpha^\star_1$. Therefore, when \cref{eqn:inexact line search} is satisfied, we have
\begin{align*}
    f(\xx + \alpha \ppt) - f(\xx) 
    &\leq -\alpha\rho\vnorm{\ppt}\vnorm{\bgg} \\
    &\leq -\rho \left(-\frac{3\Delta_H }{2L_2} + \sqrt{\frac{9 \Delta_H^2}{4 L_2^2} + \frac{6(1-\rho)\vnorm{\ppt}} {L_2 \sNC_{\tmax}}} \right) \vnorm{\bgg} \\
    &< -\rho \left(- \frac{3\Delta_H}{2L_2}  + \sqrt{ \frac{9 \Delta_H^2}{4L_2^2} + \frac{6(1-\rho)\sNC_{\tmin}\eg} {L_2\sNC_{\tmax}} } \right) \eg \\
    &\leq -\rho \left( \frac{ 6(1-\rho)\sNC_{\tmin}/(L_2\sNC_{\tmax}) } {\frac{3\Delta_H}{2L_2}  + \sqrt{\frac{9 \Delta_H^2}{4L_2^2}  + \frac{6(1-\rho)\sNC_{\tmin}} {L_2\sNC_{\tmax}}} } \right) \eg^2 \\
    &= - \cNCt \eg^2.
\end{align*}
On the third line we apply $\vnorm{\ppt} \geq \sNC_{\tmin} \vnorm{\bgg}$ and $\vnorm{\bgg} > \eg$, whereas on the fourth line we apply \cref{lemma:technical lower bound} and $\eg < 1$. In the final line we define 
\begin{align*}
    \cNCt \defeq \rho\frac{12(1- \rho) \sNC_{\tmin}/\sNC_{\tmax}} {3\Delta_H + \sqrt{9 \Delta_H^2 + 24(1-\rho)L_2\sNC_{\tmin}/\sNC_{\tmax}}}.
\end{align*}

Next we come to the positive curvature case. Considering \cref{eqn:inexact case positive function upper bound} we see that the line search condition \cref{eqn:inexact line search} is satisfied if
\begin{align*}
    D_2 \alpha^2 + D_1 \alpha + D_0 \leq 0,
\end{align*}
where 
\begin{align*}
    D_2 \defeq \frac{L_2 \vnorm{\ppt}}{6\sigma}, \quad D_1 \defeq \frac{\Delta_H}{2\sigma}, \quad D_0 \defeq -\left(\frac{1}{2} - \rho\right).
\end{align*}
Again this quadratic has a positive and negative root. The positive root is given by 
\begin{align*}
    \alpha^\star_2 &\defeq - \frac{D_1}{2D_2} + \sqrt{\frac{D_1^2}{4D_2^2} - \frac{D_0}{D_2}} \\
    &= -\frac{3 \Delta_H }{2 L_2 \vnorm{\ppt}} + \sqrt{\frac{9\Delta_H^2}{4L_2^2 \vnorm{\ppt}^2} + \frac{6\sigma(1/2 - \rho)}{L_2 \vnorm{\ppt}}}.
\end{align*}
Therefore, the largest step size for which \cref{eqn:inexact line search} holds must satisfy $\alpha \geq \min\{ 1, \alpha^\star_2 \}$. We now isolate the $\LPC$ and $\SPC$ cases. For the $\LPC$ case, by combining the line search criteria \cref{eqn:inexact line search} and $ \alpha \geq \min\{1, \alpha^\star_2\} $ we obtain
\begin{align*}
    f(\xx + \alpha \ppt) - f(\xx) &\leq - \rho \alpha \vnorm{\bgg}\vnorm{\ppt} \\
    &\leq - \rho \min\left\{ \alpha^\star_2\vnorm{\ppt} , \vnorm{\ppt} \right\}\vnorm{\bgg} \\
    &= - \rho \min\left\{ -\frac{3 \Delta_H }{2 L_2} + \sqrt{\frac{9\Delta_H^2}{4L_2^2} + \frac{6\sigma(1/2 - \rho) \vnorm{\ppt} }{L_2} }, \vnorm{\ppt} \right\}\vnorm{\bgg} \\
    &< - \rho \min\left\{  -\frac{3 \Delta_H }{2 L_2} + \sqrt{\frac{9\Delta_H^2}{4L_2^2} + \frac{6\sigma(1/2 - \rho)\sLPC_{\tmin} \eg } {L_2} }, \sLPC_{\tmin}\eg \right\}\eg \\
    &\leq - \rho \min\left\{ \frac{6\sigma(1/2 - \rho)\sLPC_{\tmin}/L_2}{ \frac{3 \Delta_H }{2 L_2} + \sqrt{\frac{9\Delta_H^2}{4L_2^2} + \frac{6\sigma(1/2 - \rho)\sLPC_{\tmin}}{L_2}}}, \sLPC_{\tmin} \right\}\eg^2 \\
    &= -\cLPCt \eg^2.
\end{align*}
On the fourth line we applied $\vnorm{\ppt} \geq \sLPC_{\tmin} \vnorm{\bgg}$ and $\vnorm{\bgg}> \eg$, while in the fifth line we applied \cref{lemma:technical lower bound} and $\eg < 1$. In the final line we define 
\begin{align*}
    \cLPCt \defeq \rho \min \left\{ \frac{12\sigma(1/2 - \rho)\sLPC_{\tmin}} {3\Delta_H + \sqrt{9\Delta_H^2 + 24\sigma(1/2 - \rho)L_2\sLPC_{\tmin}}},  \sLPC_{\tmin} \right\}.
\end{align*}

Finally, for the SPC case the line search condition \cref{eqn:inexact line search} and $\alpha \geq \min\{ 1, \alpha^\star_2 \}$ yields
\begin{align*}
    f(\xx + \alpha \ppt) - f(\xx) &\leq - \rho \alpha \vnorm{\bgg}\vnorm{\ppt} \\
    &\leq - \rho \min\left\{ \alpha^\star_2\vnorm{\ppt} , \vnorm{\ppt} \right\}\vnorm{\bgg} \\
    &= - \rho \min\left\{ - \frac{3 \Delta_H}{2L_2} + \sqrt{ \frac{9\Delta_H^2}{4L_2^2} + \frac{6\sigma(1/2 - \rho)\vnorm{\ppt}}{L_2}  }, \vnorm{\ppt} \right\}\vnorm{\bgg} \\
    &< - \rho \min\left\{ - \frac{3 \Delta_H}{2L_2} + \sqrt{ \frac{9\Delta_H^2}{4L_2^2} + \frac{6\sigma^2(1/2 - \rho) \eg}{L_2 \Lt_1^2}  }, \frac{\sigma}{\Lt_1^2} \eg \right\}\eg \\
    &\leq - \rho \min\left\{ \frac{ 6\sigma^2(1/2 - \rho)/ (L_2 \Lt_1^2)} {\frac{3 \Delta_H} {2L_2} + \sqrt{\frac{9 \Delta_H^2} {4 L_2^2} + \frac{6\sigma^2(1/2 - \rho)} {L_2 \Lt_1^2}}}, \frac{\sigma}{\Lt_1^2} \right\}\eg^2 \\
    &= -\cSPCt \eg^2.
\end{align*}
We apply $\vnorm{\ppt} \geq (\sigma/\Lt_1^2) \vnorm{\bgg}$, which follows from \cref{eqn:inexact step size lower bound}, and $\vnorm{\bgg} > \eg$ in the fourth line, while in the fifth line we apply \cref{lemma:technical lower bound}. In the final line we define
\begin{align*}
    \cSPCt \defeq \rho\min\left\{ \frac{ 12\sigma^2(1/2 - \rho)/\Lt_1^2} { 3 \Delta_H + \sqrt{ 9 \Delta_H^2 + 24\sigma^2(1/2 - \rho)L_2/\Lt_1^2}}, \frac{\sigma}{\Lt_1^2} \right\} .
\end{align*}
The proof now proceeds similarly to the deterministic case. Let 
\begin{align*}
    K \defeq \left\lceil\frac{f(\xx_0) - f^\star}{\min\left\{ \cNCt, \cLPCt, \cSPCt \right\}\eg^2} \right\rceil.
\end{align*}

Suppose that $\vnorm{\bgg_k} \leq \eg$ fails to hold for $k=0,\ldots,K-1$. We divide the iterations $\sK = \{0, \ldots,K-1\}$ into a disjoint union $\sK = \sK_{\text{NC}} \cup \sK_{\text{LPC}} \cup \sK_{\text{SPC}} $ where 
\begin{align*} 
    \sK_{\text{NC}} &= \left\{ k \in \sK \mid \dotprod{\bgg_k, \HHt_k \bgg_k} < 0 \right\} \\
    \sK_{\text{LPC}} &= \left\{ k \in \sK \mid 0 \leq \dotprod{\bgg_k, \HHt_k \bgg_k} \leq \sigma \vnorm{\bgg_k}^2 \right\} \\
    \sK_{\text{SPC}} &= \left\{ k \in \sK \mid \dotprod{\bgg_k, \HHt_k \bgg_k} > \sigma \vnorm{\bgg_k}^2 \right\}.
\end{align*}
Applying a telescoping sum and the lower bounds on function decrease in each case, we have
\begin{align*}
    f(\xx_0) - f(\xx_K) &= \sum_{i=0}^{K-1} f(\xx_k) - f(\xx_{k+1}) \\
    &= \sum_{k\in\sK_{\text{NC}}} f(\xx_k) - f(\xx_{k+1}) + \sum_{k\in\sK_{\text{LPC}}} f(\xx_k) - f(\xx_{k+1}) + \sum_{k\in\sK_{\text{SPC}}} f(\xx_k) - f(\xx_{k+1}) \\
    &> |\sK_{\text{NC}} | \cNCt \eg^2 + |\sK_{\text{LPC}}|\cLPCt \eg^2 + |\sK_{\text{SPC}}| \cSPCt \eg^2 \\
    &\geq (|\sK_{\text{NC}} | + |\sK_{\text{LPC}} | + |\sK_{\text{SPC}}|)\min\left\{ \cNCt, \cLPCt, \cSPCt  \right\}\eg^2 \\
    &= K\min\left\{ \cNCt, \cLPCt, \cSPCt \right\}\eg^2 \\
    &\geq f(\xx_0) - f^{\star},
\end{align*}
which implies $f(\xx_K) < f^\star$, a contradiction.
\end{proof}

\begin{remark}[Remark on \cref{prop:inexact global convergence}]\label{remark:remark on inexact global convergence}

Notably, \cref{prop:inexact global convergence} does not require an explicit bound on the Hessian error, $\Delta_H$. However, as we now demonstrate, the per iteration dependence on $\eg$ can be improved (relative to the worst case) for $\SPC$ steps if we impose a bound on $\Delta_H$. In particular, we stipulate that, for $\theta \in [0,1)$, the Hessian error tolerance satisfies
\begin{align*}
    \Delta_H \leq 2\theta\sigma(1/2 - \rho). \tageq\label{eqn:inexact tolerance better per iteration}
\end{align*}
Consider then a ``large $\ppt$'' setting defined by
\begin{align*}
    \vnorm{\ppt} \geq \frac{6\sigma(1/2 - \rho) - 3 \Delta_H}{L_2} = \frac{6\sigma(1/2 - \rho)}{L_2} - \frac{3\Delta_H}{L_2}.
\end{align*}
Note that the lower bound on $\ppt$ is nontrivial because of the bound in \cref{eqn:inexact tolerance better per iteration}. Rearranging and subbing to the step size acceptance tolerance for the $\SPC$ case, $\alpha^\star_2$, we have 
\begin{align*}
    \alpha_2^\star &= -\frac{3 \Delta_H }{2 L_2 \vnorm{\ppt}} + \sqrt{\frac{9\Delta_H^2}{4L_2^2 \vnorm{\ppt}^2} + \frac{6\sigma(1/2 - \rho)}{L_2 \vnorm{\ppt}}} \\ 
    &\leq -\frac{3 \Delta_H }{2 L_2 \vnorm{\ppt}} + \sqrt{\frac{9\Delta_H^2}{4L_2^2 \vnorm{\ppt}^2} + \frac{\vnorm{\ppt} + \frac{3\Delta_H}{L_2}}{\vnorm{\ppt}}} \\ 
    &= -\frac{3 \Delta_H }{2 L_2 \vnorm{\ppt}} + \sqrt{1 + \frac{9\Delta_H^2}{4L_2^2 \vnorm{\ppt}^2} + \frac{3\Delta_H}{L_2 \vnorm{\ppt}}} \\ 
    &= -\frac{3 \Delta_H }{2 L_2 \vnorm{\ppt}} + \sqrt{\left( 1 + \frac{3\Delta_H}{2L_2 \vnorm{\ppt}} \right)^2} \\ 
    &= 1.
\end{align*}
Therefore, for large $\ppt$ the largest step size that satisfies \cref{eqn:inexact line search}, must also satisfy $\alpha \geq \alpha_2^\star$. Additionally, applying \cref{eqn:inexact tolerance better per iteration} to the lower bound on $\vnorm{\ppt}$ we obtain
\begin{align*}
    \vnorm{\ppt} \geq \frac{6\sigma(1/2 - \rho) - 3 \Delta_H }{L_2} \geq \frac{6\sigma(1 - \theta)(1/2 - \rho)}{L_2}.
\end{align*}
By combining the line search condition \cref{eqn:inexact line search}, $\alpha \geq \alpha_2^\star$, \cref{eqn:inexact second order descent}, the $\SPC$ curvature test and the lower bound on $\vnorm{\ppt}$ we have
\begin{align*}
    f(\xx + \alpha \ppt) - f(\xx) &\leq  \alpha \rho \dotprod{\ppt, \bgg}  \\ 
    &\leq - \alpha^\star_2 \rho \dotprod{\ppt, \HHt \ppt} \\
    &\leq - \rho \left( -\frac{3 \Delta_H }{2 L_2 \vnorm{\ppt}} + \sqrt{\frac{9\Delta_H^2}{4L_2^2 \vnorm{\ppt}^2} + \frac{6\sigma(1/2 - \rho)}{L_2 \vnorm{\ppt}}} \right) \sigma \vnorm{\ppt}^2 \\ 
    &\leq - \sigma \rho \left( -\frac{3 \Delta_H }{2 L_2}  + \sqrt{\frac{9\Delta_H^2}{4L_2^2} + \frac{6\sigma(1/2 - \rho)\vnorm{\ppt}}{L_2}} \right) \vnorm{\ppt} \\ 
    &\leq - \sigma \rho \left( -\frac{3 \Delta_H }{2 L_2}  + \sqrt{\frac{9\Delta_H^2}{4L_2^2} + \frac{36\sigma^2(1/2 - \rho)^2(1-\theta)}{L_2^2}} \right)\left(\frac{6\sigma(1 - \theta)(1/2 - \rho)}{L_2} \right) \\
    &= - \sigma \rho \left( \frac{ 72\sigma^2(1/2 - \rho)^2(1-\theta)/L_2} { 3 \Delta_H + \sqrt{ 9\Delta_H^2 + 144\sigma^2(1/2 - \rho)^2(1-\theta)}} \right)\left(\frac{6\sigma(1 - \theta)(1/2 - \rho)}{L_2} \right) \\
    &= - \frac{\sigma \rho}{L_2^2} \left( \frac{ 432\sigma^3(1/2 - \rho)^3(1-\theta)^2} { 3 \Delta_H + \sqrt{ 9\Delta_H^2 + 144\sigma^2(1/2 - \rho)^2(1-\theta)}} \right). \tageq\label{eqn:inexact SPC larger p function decrease}
\end{align*}
This bound suggests that, analogously to the ``large $\pp$'' case for the exact Hessian, a better dependence on $\eg$ is obtained when $\ppt$ is large and $\Delta_H$ is controlled. In fact, when $\theta=0$ we have $\Delta_H=0$ and \cref{eqn:inexact SPC larger p function decrease} matches the ``large $\pp$'' exact case in \cref{lemma:per iteration decrease}. In conclusion, despite the worst case headline rate, in some cases a better dependence on $\eg$ may be obtained for certain steps if $\Delta_H$ is controlled. 
\end{remark}

\subsection{Sub-sampled Hessian Error Bound} \label{sec:subsampled hessian}

\begin{lemma}[Hessian Sub-sampling] \label{lemma:hessian subsampling}

Consider the finite-sum objective 
\begin{align*}
    f(\xx) = \frac{1}{n} \sum_{i=1}^n f_i(\xx).
\end{align*}
Suppose that there exists $0 \leq \Lmax_1 < \infty$ such that for all $\xx \in \real^d$ we have $\max_{i=1,\ldots,n} \vnorm{\HH_i \bgg} \leq \Lmax_1 \vnorm{\bgg}$. Let $\delta \in (0,1)$ and $\HHt$ be as in \cref{eqn:Hessian subsampling}. Then for any $\Delta_H \geq 0$ and $\xx \in \real^d$ we have 
\begin{align*}
    \Pr\left(\vnorm{(\HHt - \HH)\bgg} \leq \Delta_H \vnorm{\bgg} \right) \geq 1-\delta,
\end{align*}
if 
\begin{align*}
    |\sI_H| \geq \frac{ (\Lmax_1)^2 \left(1 + \sqrt{8\log(1/\delta)}\right)^2} {\Delta_H^2 }.
\end{align*}
    
\end{lemma}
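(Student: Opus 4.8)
The plan is to fix $\xx$, write $\bgg = \bgg(\xx)$ and $m = \abs{\sI_H}$, and reduce the claim to a concentration inequality for the sample mean of independent, bounded random vectors. Since the indices in $\sI_H$ are drawn uniformly with replacement, the vectors $\zz_j \defeq \HH_{i_j}\bgg$ for $j = 1, \ldots, m$ are i.i.d.\ with common mean $\Ex[\zz_j] = \tfrac1n\sum_{i=1}^n \HH_i \bgg = \HH \bgg$ (using $f = \tfrac1n\sum_i f_i$), and they are uniformly bounded, $\vnorm{\zz_j} \leq \Lmax_1 \vnorm{\bgg} =: R$ almost surely, by the hypothesis $\max_i \vnorm{\HH_i \bgg}\le \Lmax_1\vnorm{\bgg}$. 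With this notation $\HHt \bgg = \tfrac1m \sum_{j=1}^m \zz_j$, so $(\HHt - \HH)\bgg = \tfrac1m \sum_{j=1}^m (\zz_j - \HH\bgg)$ is a centered sample mean, and the target quantity is $F \defeq \vnorm{(\HHt - \HH)\bgg}$.

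First I would control the expectation $\Ex[F]$. By Jensen's inequality $\Ex[F] \leq \sqrt{\Ex[F^2]}$, and expanding the squared norm of a sum of i.i.d.\ centered vectors kills the cross terms, giving $\Ex[F^2] = \tfrac1m \Ex\vnorm{\zz_1 - \HH\bgg}^2 \leq \tfrac1m \Ex\vnorm{\zz_1}^2 \leq R^2/m$. Hence $\Ex[F] \leq R/\sqrt{m}$.

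Next I would obtain a tail bound on $F$ about its mean via the bounded-differences (McDiarmid) inequality. Viewing $F$ as a function of the independent draws $\zz_1, \ldots, \zz_m$, replacing a single $\zz_k$ by another admissible value changes the sample mean by $\tfrac1m(\zz_k - \zz_k')$, of norm at most $2R/m$; by the reverse triangle inequality $F$ then has bounded differences $c_k = 2R/m$, so $\sum_k c_k^2 = 4R^2/m$. McDiarmid's inequality yields a sub-Gaussian tail of the form $\Pr(F \geq \Ex[F] + t) \leq \exp\bigl(-t^2/(2\sum_k c_k^2)\bigr) = \exp\bigl(-t^2 m/(8R^2)\bigr)$; setting the right-hand side equal to $\delta$ gives $t = R\sqrt{8\log(1/\delta)/m}$. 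Combining this with the expectation bound, with probability at least $1-\delta$,
\[
    F \leq \frac{R}{\sqrt{m}}\bigl(1 + \sqrt{8\log(1/\delta)}\bigr).
\]
Substituting $R = \Lmax_1 \vnorm{\bgg}$, the stated sample size $m \geq (\Lmax_1)^2(1+\sqrt{8\log(1/\delta)})^2/\Delta_H^2$ is exactly what forces the right-hand side to be at most $\Delta_H \vnorm{\bgg}$, completing the proof.

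The main obstacle is the vector-valued nature of the deviation: a scalar Hoeffding bound cannot be applied directly to $\vnorm{\cdot}$ of a vector sum, so the argument must be split into a first-moment (variance) estimate for $\Ex[F]$ and a separate concentration-of-the-norm step. Applying McDiarmid to $F$ is what cleanly decouples these two pieces; the only care needed is in verifying the bounded-difference constant $c_k = 2R/m$ and in tracking the dependence on $m$ and $\Delta_H$ when inverting the final inequality to read off the sample complexity.
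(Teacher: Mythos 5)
Your proof is correct, but it takes a genuinely different route from the paper. The paper treats $\HH\bgg$ as a matrix product $\AA\BB$ with $\AA = [\HH_1\bgg,\ldots,\HH_n\bgg]$ and $\BB = [1/n,\ldots,1/n]^\transpose$, and then invokes the randomized matrix-multiplication concentration result of Drineas, Kannan, and Mahoney as a black box, which directly delivers the bound $\vnorm{(\HHt-\HH)\bgg} \leq \left(1+\sqrt{8\log(1/\delta)}\right)\Lmax_1\vnorm{\bgg}/\sqrt{\abs{\sI_H}}$ with probability $1-\delta$. You instead give a self-contained, first-principles argument: a variance computation bounding $\Ex\vnorm{(\HHt-\HH)\bgg} \leq R/\sqrt{m}$, followed by McDiarmid's bounded-differences inequality applied to the norm of the centered sample mean with $c_k = 2R/m$. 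These produce identical constants, which is no accident—the cited lemma is itself proved by essentially this expectation-plus-martingale-concentration scheme, so you have in effect re-derived it in the special case of a matrix-vector product with uniform sampling probabilities. Your approach buys transparency and removes an external dependency; the paper's buys brevity and signals the connection to the sketching literature. One small remark: the McDiarmid tail you quote, $\exp\left(-t^2/(2\sum_k c_k^2)\right)$, is a weakened form of the standard $\exp\left(-2t^2/\sum_k c_k^2\right)$; your claimed bound is therefore valid (it is implied by the sharp version), and it is precisely this weaker constant that reproduces the paper's $\sqrt{8\log(1/\delta)}$—had you used the sharp version, you would in fact obtain the slightly stronger sample-size requirement with $1+\sqrt{\log(1/\delta)/2}$ in place of $1+\sqrt{8\log(1/\delta)}$, so the lemma's constant is not tight under your argument.
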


\begin{proof}
    The result proceeds similarly to \citet[Lemma 3]{roosta2019subsampledNewton}. First we write $\HH\bgg = \AA \BB$ where
    \begin{align*}
        \AA = [\HH_1\bgg, \ldots, \HH_n \bgg] \in \real^{d \times n}, \quad \BB = [1/n, \ldots, 1/n]^\transpose \in \real^n.
    \end{align*}
    We would like to relate this matrix product to the sub-sampled alternative. To do this, take the minibatch, $\sI_H$, and form $\tilde{\AA} \in \real^{d \times |\sI_H| }$, using the columns of $\AA$ corresponding to $\sI_H$, rescaled by $\sqrt{n/|\sI_H|}$. Similarly, form $\tilde{\BB} \in \real^{|\sI_H|}$, using the rows of $\BB$ corresponding to $\sI_H$, rescaled by $\sqrt{n/|\sI_H|}$. Clearly,
    \begin{align*}
        \tilde{\AA} \tilde{\BB}= \frac{n}{|\sI_H|} \sum_{i \in \sI_H} \frac{1}{n} \HH_i \bgg = \HHt \bgg.
    \end{align*}
    Applying \citet[Lemma 11]{drineas2006FastMonteCarlo} allows us to relate $\AA\BB$ to $\tilde{\AA}\tilde{\BB}$. In particular, with probability at least $1- \delta$, we have
    \begin{align*}
        \vnorm{\HH\bgg - \HHt\bgg} = \vnorm{\AA \BB - \tilde{\AA}\tilde{\BB}} &\leq \sqrt{\frac{n}{|\sI_H|} \sum_{i=1}^n \vnorm{\AA_i}^2 |\BB_i|^2} + \frac{n}{\sqrt{|\sI_H|}} \sqrt{8 \log(1/\delta)} \max_{i=1, \ldots, n}\vnorm{\AA_i}|\BB_i| \\
        &\leq \sqrt{\frac{1}{n|\sI_H|} \sum_{i=1}^n \vnorm{\HH_i \bgg}^2  } + \frac{n}{\sqrt{|\sI_H|}} \sqrt{8 \log(1/\delta)}\max_{i=1, \ldots, n} \vnorm{\HH_i\bgg}/n \\
        &\leq \sqrt{\frac{1}{|\sI_H|} (\Lmax_1)^2\vnorm{\bgg}^2 } + \sqrt{\frac{8 \log(1/\delta)}{|\sI_H|}} \Lmax_1 \vnorm{\bgg} \\
        &\leq \frac{1}{\sqrt{|\sI_H|}}\left( 1 + \sqrt{8 \log(1/\delta)} \right) \Lmax_1\vnorm{\bgg}.
    \end{align*}
    Finally, we see that 
    \begin{align*}
        |\sI_H| \geq \frac{ (\Lmax_1)^2 \left(1 + \sqrt{8\log(1/\delta)}\right)^2} {\Delta_H^2 } \implies \frac{1}{\sqrt{|\sI_H|}}\left( 1 + \sqrt{8 \log(1/\delta)} \right)\Lmax_1 \leq \Delta_H.
    \end{align*}
\end{proof}

As a result of \cref{lemma:hessian subsampling} it is clear that \cref{ass:Hessian inexactness bound} is satisfied with high probability as, by the Cauchy-Schwarz inequality, we have 
\begin{align*}
    \abs{\dotprod{\bgg, (\HH - \HHt)\bgg}} \leq \vnorm{\bgg} \vnorm{(\HH - \HHt)\bgg}.
\end{align*}

\section{Miscellaneous Scaling Properties }

\paragraph{MR Scaling Tighter Lower Bound} Recall from \cref{lemma:scaling lower bounds} that if $\vnorm{\HH\bgg} \leq L_1 \vnorm{\bgg}$ for some $L_1$ (see \cref{ass:Hessian gradient smoothness condition}) then then CG, MR and GM scalings satisfy a lower bound in terms of the gradient norm. Notably, the MR scaling lower bound is weaker by a factor of $\sigma/L_1$, in comparison with the CG and GM. In the following lemma we show that this lower bound can be tightened to match the CG and GM case if the Hessian spectrum is positive and bounded.

\begin{lemma}\label{lemma:tighter MR lower bound}
    If the Hessian satisfies $0 \preceq \HH \preceq M \eye$ and $\vnorm{\HH \bgg} >0$ then 
    \begin{align*}
        \sMR \geq 1/M.
    \end{align*}
\end{lemma}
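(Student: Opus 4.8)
The plan is to unwind the definition of the MR scaling and reduce the claimed bound to a single positive‑semidefiniteness statement about a polynomial in $\HH$. Recall from \cref{eqn:MR scaling defn} that
\[
    \sMR = \frac{\dotprod{\bgg, \HH \bgg}}{\vnorm{\HH \bgg}^2},
\]
and note that since $\HH \succeq 0$ we have $\dotprod{\bgg, \HH\bgg} \geq 0$, while the hypothesis $\vnorm{\HH\bgg} > 0$ guarantees $\sMR$ is well defined and the denominator is strictly positive. Using $\vnorm{\HH\bgg}^2 = \dotprod{\HH\bgg, \HH\bgg} = \dotprod{\bgg, \HH^2\bgg}$, the desired inequality $\sMR \geq 1/M$ is equivalent, after clearing the positive denominator and multiplying by $M>0$, to
\[
    M\dotprod{\bgg, \HH\bgg} \;\geq\; \dotprod{\bgg, \HH^2 \bgg},
    \qquad\text{i.e.}\qquad
    \dotprod{\bgg, (M\HH - \HH^2)\bgg} \geq 0.
\]

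The key step is then to observe that $M\HH - \HH^2 = \HH(M\eye - \HH)$. Both factors are positive semidefinite under the hypothesis $0 \preceq \HH \preceq M\eye$, and they commute (being polynomials in $\HH$), so the product is symmetric and positive semidefinite. The cleanest way I would argue this is via the spectral decomposition of $\HH$: writing $\HH = \sum_i \lambda_i \vv_i \vv_i^{\transpose}$ with each eigenvalue satisfying $0 \leq \lambda_i \leq M$, the matrix $M\HH - \HH^2$ has the same eigenvectors with eigenvalues $\lambda_i(M - \lambda_i) \geq 0$. Hence $M\HH - \HH^2 \succeq 0$, which yields $\dotprod{\bgg, (M\HH-\HH^2)\bgg} \geq 0$ and closes the argument.

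There is essentially no hard obstacle here; the proof is a short spectral calculation. The only point requiring a moment of care is ensuring the factors genuinely commute so that $\HH(M\eye - \HH)$ is symmetric (a general product of PSD matrices need not be PSD), which is why I would lean on the spectral decomposition rather than a black‑box ``product of PSD matrices'' claim. This also makes transparent why the bound is tight: equality in $\sMR = 1/M$ occurs precisely when $\bgg$ lies in the eigenspace of $\HH$ associated with the eigenvalue $M$.
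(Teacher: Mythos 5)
Your proof is correct, but it takes a genuinely different route from the paper's. You clear the (strictly positive) denominator and reduce the claim to $\dotprod{\bgg, (M\HH - \HH^2)\bgg} \geq 0$, which you verify spectrally: every eigenvalue $\lambda_i$ of $\HH$ lies in $[0, M]$, so $\lambda_i(M - \lambda_i) \geq 0$ and $\HH(M\eye - \HH) \succeq 0$. The paper instead exploits the structural interpretation of $\sMR$ as a Rayleigh quotient of the pseudo-inverse: using $\HH \HH^\dagger \HH = \HH$, it writes
\begin{align*}
    \sMR = \frac{\dotprod{\HH\bgg, \HH^\dagger \HH \bgg}}{\vnorm{\HH\bgg}^2},
\end{align*}
observes that $\HH\bgg \in \range(\HH)$ is orthogonal to $\Null(\HH)$, and bounds the quotient below by the smallest nonzero eigenvalue of $\HH^\dagger$, which is $1/\lambda_{\tmax}(\HH) \geq 1/M$. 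Your argument is more elementary (no pseudo-inverse machinery, no need to track where $\HH\bgg$ lives relative to the null space), and your spectral factorization sidesteps the pitfall that a product of two PSD matrices need not be PSD — a point you correctly flag. What the paper's version buys is consistency with its running interpretation of $\sMR$ as a Rayleigh quotient of $\HH^\dagger$ (cf.\ the discussion after \cref{eqn:GM scaling defn}), which generalizes more directly if one later wants bounds in terms of the nonzero spectrum rather than $M$. One small correction to your closing remark: equality $\sMR = 1/M$ does \emph{not} hold precisely when $\bgg$ lies in the $M$-eigenspace. Since the equality condition is $\sum_i c_i^2 \lambda_i(M - \lambda_i) = 0$ for the eigen-coefficients $c_i$ of $\bgg$, the vector $\bgg$ may additionally carry an arbitrary component in $\Null(\HH)$; e.g., $\HH = \diag(M, 0)$ and $\bgg = (1,1)^{\transpose}$ gives $\sMR = 1/M$ with $\bgg$ outside the $M$-eigenspace. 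This is a side comment, not a flaw in the proof of the stated bound.
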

\begin{proof}
    We utilise the fact that $\sMR$ is a Rayleigh quotient of $\HH^\dagger$. Suppose that $\HH$ has $\psi_+$ nonzero eigenvalues ($\vnorm{\HH \bgg} > 0$ implies $\psi_+ > 0$) given by $0 < \lambda_1 < \lambda_2 \leq \ldots \leq \lambda_{\psi_+} \leq M$. The corresponding nonzero eigenvalues of $\HH^\dagger$ are given by $1/\lambda_{\psi_+} \leq \ldots \leq 1/\lambda_1$. Since $\HH\bgg \in \range{(\HH)}$ and is therefore orthogonal to $\Null{(\HH)}$ we have
    \begin{align*}
        \sMR = \frac{\langle \bgg, \HH \bgg \rangle}{\vnorm{\HH \bgg}^2}  = \frac{\langle \HH \bgg, \HH^\dagger \HH \bgg \rangle}{\| \HH \bgg \|^2} \geq \frac{1}{\lambda_{\psi^+}} \frac{\vnorm{\HH \bgg}^2}{\vnorm{\HH \bgg}^2} \geq \frac{1}{M}.
    \end{align*}
\end{proof}

The requirement that $\vnorm{\HH \bgg} > 0$ is not stringent since the positive curvature test $\dotprod{\bgg, \HH \bgg} > 0$ implies $\HH\bgg \neq 0$.

\section{Additional Numerical Results} \label{apx:additional-numerical-results}

The code used to run our experiments can be found \href{https://anonymous.4open.science/r/FirstishOrderMethods-B25E/README.md}{here}.

\subsection{Oracle Calls as Complexity Measure} \label{apx:numerical-results-oracle-calls}

Following the typical convection in the optimization literature, in all our experiments, we plot the objective value against the total number of oracle calls for function, gradient, and Hessian-vector product evaluations, which allows for a fair comparison between methods with a differing per-iteration computational costs.
We adopt this approach because the measurement of ``wall-clock'' time can be heavily dependent on specific implementation details and computational platform. 
In contrast, counting the number of equivalent function evaluations, as an implementation and system independent unit of complexity is more appropriate and fair. 
More specifically, upon evaluating the function, computing its gradient is equivalent to one additional function evaluation, and computing a Hessian-vector product requires two additional function evaluations compared to a gradient evaluation \cite{pearlmutterFastExactMultiplication1994}. 
For example, in neural networks, for given data at the input layer, evaluation of network's output, i.e., function evaluation, involves one forward propagation. The corresponding gradient is computed by performing one additional backward propagation. After computing the gradient, an additional forward followed by a backward propagation give the corresponding Hessian-vector product \cite{goodfellow2016deep,blondel2024elementsdifferentiableprogramming}. 

\subsection{Competitor Algorithms} \label{apx:competitor algorithms}

In this section we give an outline of the algorithms we are comparing against.

\paragraph{Line Search}
For line search gradient descent, we apply a backtracking procedure (\cref{alg:back tracking line search}) based on the Amijo condition \cref{eqn:armijo condition} with $\pp= -\bgg$. 
Inspired by \citep[Algorithm 2]{vaswaniPainlessStochasticGradient2021}, we consider three ``reset schemes'' for initializing back tracking line search at each iteration, i.e., setting $\alpha^0$ in \cref{alg:back tracking line search}. 
In the first procedure, which we term \textit{no reset}, we set $\alpha^0 = \alpha_{k-1}$, where $\alpha_{k-1}$ is the final step size from the previous iteration. Because the final step size is recycled at each iteration, no reset initialization will require significantly less backtracking. However, by the same token, once the step size shrinks it cannot be increased in future iterations, which limits step sizes and adaptivity.
In contrast, for \textit{full reset} initialization we fix some $\alpha^\text{init}$ and set $\alpha^0 = \alpha^\text{init}$ at each iteration. This setting will lead to additional backtracks as the line search must begin over at each iteration, however it also allows for a greater degree of adaptivity when compared with no resetting. 
Finally, in \textit{limited reset} we set $\alpha^0 = \alpha_{k-1}/\gamma$ where $\gamma \in (0, 1)$ is a growth parameter and $\alpha_{k-1}$ is the final step size from the previous iteration. This setting is an intermediate between full and no resets, as it allows for some growth in the step length from iteration to iteration. For our experiments we set $\gamma = \theta$, $\alpha^{\text{init}}=1$ and $\alpha_{-1}=1$. 
We report the resetting technique utilized in our final results in the additional results for each of our examples (\cref{apx:numerical-results-logistic,apx:numerical-results-mlp,apx:numerical-results-resnet}).

\paragraph{PoNo Line Search}
In addition to the above line search ``reset'' schemes, we also consider a deterministic version of the Polyak non-monotone line search from \citep{galliDontBeMonotone2023}. Specifically, we simply replace the minibatch function and gradient with the full function and gradient values. We set the hyperparameters to the ``standard'' values outlined in \cite[Appendix C]{galliDontBeMonotone2023} and use $f^*=0$ as our lower bound.

\paragraph{Fixed Step Size and Accelerated Methods}
Fixed step size GD consists of applying the update 
\begin{align*}
    \xx_{k+1} = \xx_k - \alpha \bgg_k, \tageq\label{eqn:fixed step size GD}
\end{align*}
at each iteration, where $\alpha$ is a step size parameter which must be set. The momentum methods utilize updates of the form 
\begin{align*}
    \begin{cases}
        \vv_{k+1} = \beta \vv_k -\alpha \bgg_{k} \\
        \xx_{k+1} = \begin{cases}
        \xx_k + \vv_{k+1} &\quad \text{if Heavy ball.} \\
        \xx_k -\alpha \bgg_k + \beta \vv_{k+1} &\quad \text{if Nesterov.}
    \end{cases}
    \end{cases} \tageq\label{eqn:momentum methods}
\end{align*}
where $\alpha$ is a step size parameter and $\beta$ is a momentum parameter. If no theoretical value is available we set $\beta=0.9$.
For Adam \citep{kingma2014adam} we use standard values of $\beta_1 = 0.9$, $\beta_2=0.999$ and $\epsilon =10^{-8}$. Each of the Adam, fixed step size and momentum methods require a step size parameter to be set. We do this by using either theoretical values (where available) or via a tuning procedure; see \cref{apx:numerical-results-logistic,apx:numerical-results-mlp,apx:numerical-results-resnet} for specifics. 

\subsection{Multi-class Logistic Regression} \label{apx:numerical-results-logistic}

Consider a set of data items $\sD = \{\aa_i, b_i \}_{i=1}^n \subset \real^d \times \{1, \ldots C\}$. Denote the weights of each class as $\xx_1, \ldots, \xx_{C}$ and define $\xx = [ \xx_1, \ldots, \xx_{C-1}]$. We are free to take $\xx_C = \zero$ as class $C$ is identifiable from the weights of the other classes. The objective, $f$, is given by 
\begin{align*}
    f(\xx) = \frac{1}{n} \sum_{i=1}^n  \sum_{c=1}^{C-1} -\one(b_i = c) \log{(\text{softmax}(\xx_c, \aa_i))} + \frac{\lambda}{2} \vnorm{\xx}^2, \tageq\label{eqn:multinomial regression objective}
\end{align*}
where $\one(\cdot)$ is the indicator function and 
\begin{align*}
    \text{softmax}(\xx_c, \aa_i) = \frac{\exp{(\langle \xx_c, \aa_i \rangle)}}{\sum_{c=1}^C \exp{(\langle \xx_c, \aa_i \rangle)}}.
\end{align*}
In our experiments a bias parameter is included in the weights for each class. We set the regularization parameter to $\lambda=10^{-3}$ and initialize by setting $\xx_0 = 0$. We run all methods until a maximum of $10^5$ oracle calls or until $\vnorm{\bgg_k} \leq 10^{-4}$.

Note that the spectrum of the Hessian of \cref{eqn:multinomial regression objective} can be bounded as 
\begin{align*}
    \lambda \eye \preceq \grad^2 f(\xx) \preceq \left(\frac{C-1}{4n} \vnorm{\AA}^2 + \lambda \right) \eye, \tageq\label{eqn:multinomial regression Hessian bound}
\end{align*}
which allows us to estimate the strong convexity parameter and Lipschitz constant of \cref{eqn:multinomial regression objective} as 
\begin{align*}
    \mu_{\text{approx}} \defeq \lambda, \quad L_{\text{approx}} \defeq \frac{C-1}{4n} \vnorm{\AA}^2 + \lambda.
\end{align*}

\paragraph{Parameter Settings}

For the scaled gradient methods, we set $\sigma=0$ as \cref{eqn:multinomial regression objective} is $\mu$-strongly convex (i.e., curvature along the gradient is already lower bounded). For fixed step size gradient descent \cref{eqn:fixed step size GD} we use a step size of $\alpha=1/L_\text{approx}$. For both the scaled GD and vanilla GD line search we utilized the the standard settings $\theta=0.5$ and $\rho=10^{-4}$. The limited reset initialization scheme achieved the best performance for vanilla GD with line search. 

For $\mu$-strongly convex, $\Lg$-smooth problems the parameters for the Heavy ball momentum \citep{polyakIntroductionOptimization1987} can be set as
\begin{align*}
    \alpha = \frac{4}{(\sqrt{\Lg} + \sqrt{\mu})^2}, \quad \beta = \left(\frac{\sqrt{\Lg} - \sqrt{\mu}}{\sqrt{\Lg} + \sqrt{\mu}} \right)^2.
\end{align*}
While for Nesterov acceleration \citep{nesterovIntroductoryLecturesConvex2004} the step size and momentum parameter can be set as
\begin{align*}
    \alpha = \frac{1}{\Lg}, \quad \beta = \frac{\sqrt{\Lg/\mu}- 1}{\sqrt{\Lg/\mu} + 1}.
\end{align*}
For the momentum methods \cref{eqn:momentum methods} in our experiments use these settings with $\mu_{\text{approx}}$ and $L_{\text{approx}}$ taking the place of $\mu$ and $\Lg$, respectively. The step size for Adam was tuned over the grid $\{10^{-1}, 10^{-2}, 10^{-3}, 10^{-4}, 10^{-5}\}$ . 

\paragraph{Additional Results}

We now present some additional numerical results for the multi-class logistic regression problem. In \cref{fig:logistic_regression_cifar10_ponos} we give the results from \cref{fig:logistic_regression_cifar10} with PoNo line search included. We see that, despite enforcing monotonicity, at each iteration our method outperforms the best iterates of the highly non-monotone PoNo method. Meanwhile, the best iterates of PoNo outperform vanilla GD with line search.
\begin{figure}[ht]
    \centering
    \includegraphics[width=\linewidth]{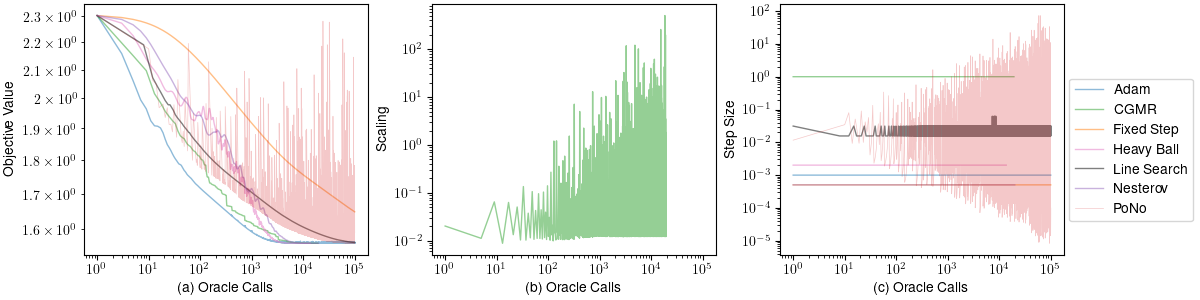}
    \caption{ Multi-class logistic regression on CIFAR10 (\cref{fig:logistic_regression_cifar10} with PoNo line search included). (a) Objective value. (b) Scaling utilized by the CGMR method. (c) Step size. }
    \label{fig:logistic_regression_cifar10_ponos}
\end{figure}

In \cref{fig:logistic_regression_cifar10_scalings} we evaluate the performance of each of the scaled gradient methods. We see that CGMR and MRCG (which differ by whether they start on the CG or MR scaling, respectively) perform quite similarly and handily outperform the other methods. 

\begin{figure}[!ht]
    \centering
    \includegraphics[width=0.4\linewidth]{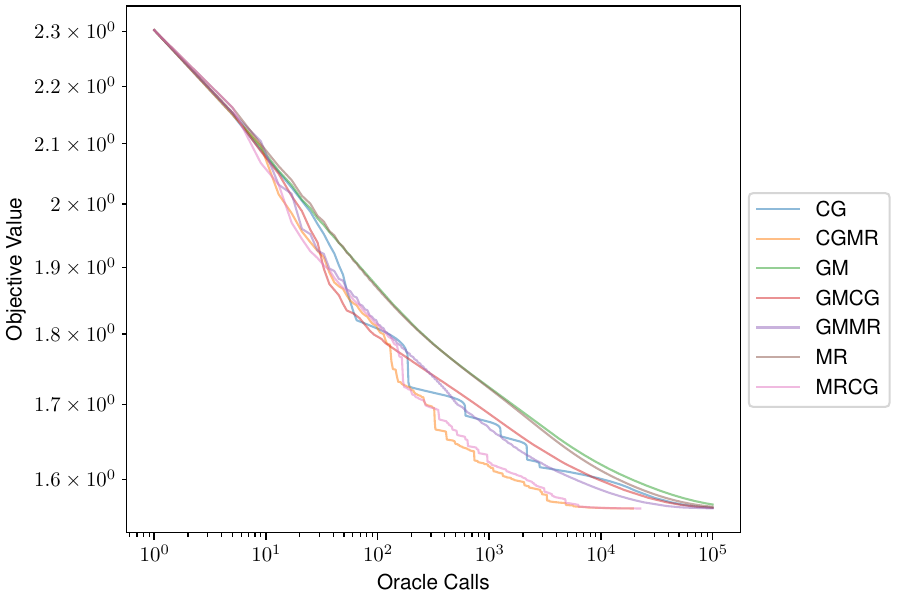}
    \caption{Comparison between scaling method performance for multiclass logistic regression on CIFAR10.}
    \label{fig:logistic_regression_cifar10_scalings}
\end{figure}

In \cref{fig:logistic_regression_cifar10_CG} and \cref{fig:logistic_regression_cifar10_MR} we see a breakdown of the performance of the ``CG'' and ``MR'' scalings, respectively. Firstly, in panel (c) for each method, we see that the unit step size is accepted by the line search at each iteration, similarly to MRCG.
For the MR scaling (\cref{fig:logistic_regression_cifar10_MR}) we plot the gradient norm in the (a) panel, from which we see that the MR method produces a monotonic decrease in the gradient norm at each iteration with a unit step size; indicating the results from \cref{thm:local convergence MR second-order sufficient} can hold over a wide portion of the optimization landscape. This is despite the line search targeting the function value, rather than the gradient norm. 
In panel (b) of \cref{fig:logistic_regression_cifar10_CG} and \cref{fig:logistic_regression_cifar10_MR} we see that the oscillating behavior is observed in the scaling values. 
However, comparing with \cref{fig:logistic_regression_cifar10}, it is clear that this oscillation is over a smaller range of values (particularly for MR scaling) than the alternating MRCG scaling. This indicates that the alternation between MR and CG scalings is key to obtaining large scaling values and, in light of \cref{thm:local convergence}, corresponding rapid convergence.

\begin{figure}[!ht]
    \centering
    \includegraphics[width=0.8\linewidth]{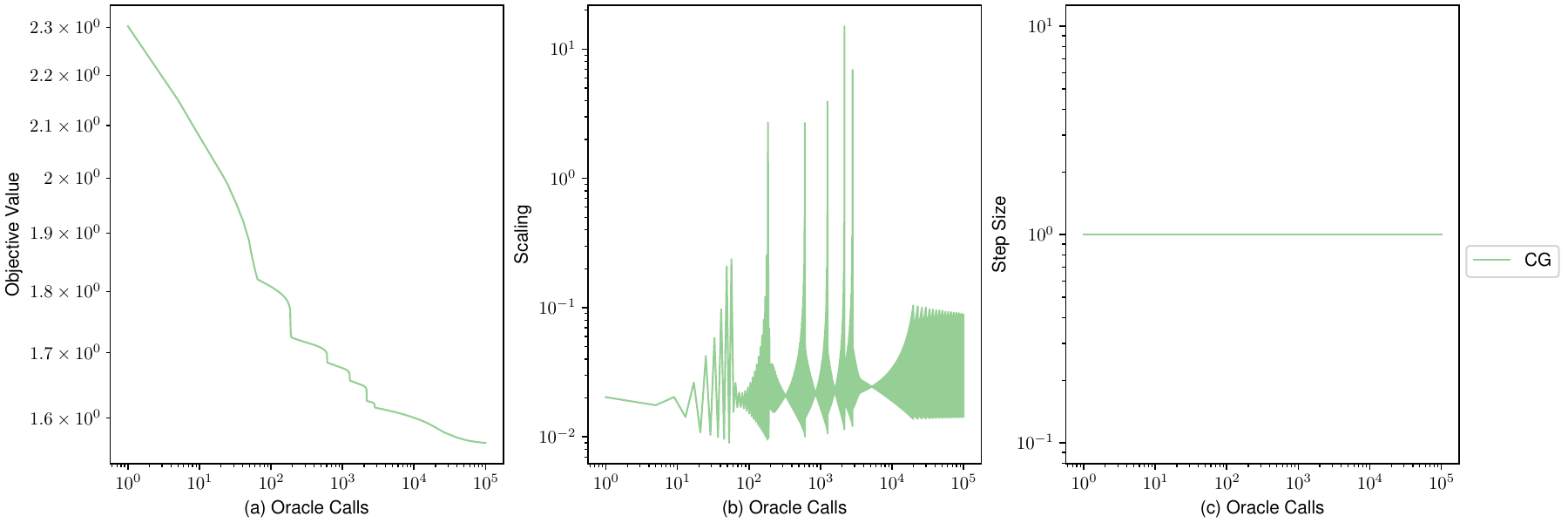}
    \caption{Performance of the CG scaling on the multi-class logistic regression problem on CIFAR10. (a) The objective function. (b) The scaling selected by the CG method. (c) The step size selected by line search.}
    \label{fig:logistic_regression_cifar10_CG}
\end{figure}

\begin{figure}[!ht]
    \centering
    \includegraphics[width=0.8\linewidth]{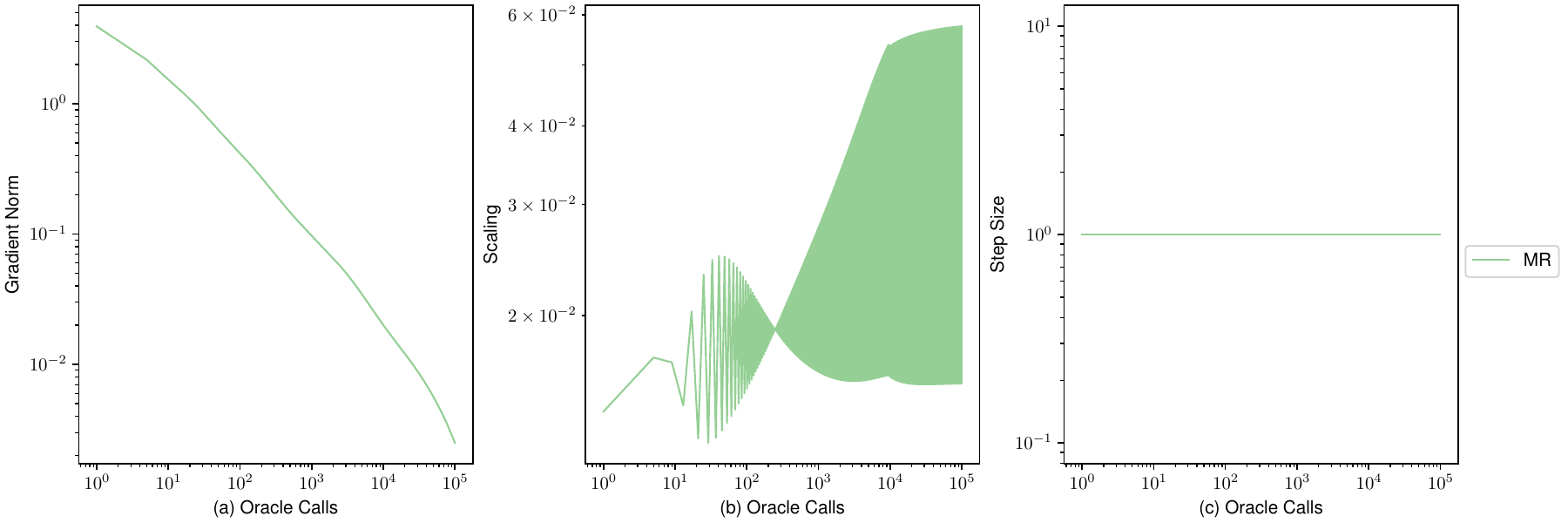}
    \caption{Performance of the MR scaling on the multi-class logistic regression problem on CIFAR10. (a) The gradient norm. (b) The scaling selected by the MR method. (c) The step size selected by line search.}
    \label{fig:logistic_regression_cifar10_MR}
\end{figure}

Finally, in \cref{fig:logistic_regression_cifar10_time} we plot the objective value against wall-clock time we see that results largely conform with oracle calls. 

\begin{figure} [!ht]
    \centering
    \includegraphics[width=0.4\linewidth]{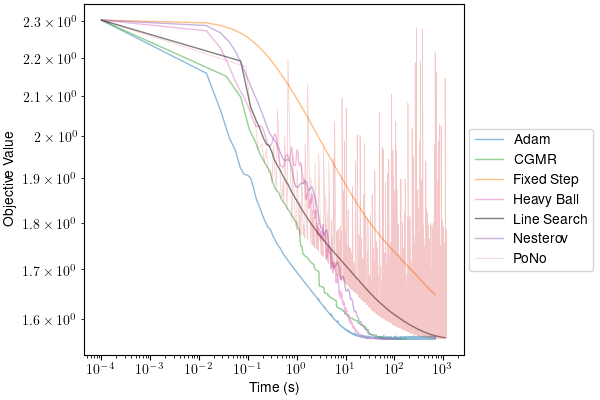}
    \caption{Wall-clock time for multi-class logistic regression on the CIFAR10 dataset. $10^{-4}$ has been added to all times for plotting purposes.}
    \label{fig:logistic_regression_cifar10_time}
\end{figure}

\subsection{MLP} \label{apx:numerical-results-mlp}

Let $\hh(\xx; \cdot)$ denote a two layer MLP with 100 hidden units per layer with GeLU \citep{hendrycksGaussianErrorLinear2023} activations and $C$ output layers, with parameters $\xx$. The objective for our experiment is the function
\begin{align*}
    f(\xx) = \frac{1}{n} \sum_{i=1}\text{CrossEntropy}(\hh(\xx; \aa_i), b_i) + \frac{\lambda}{2} \vnorm{\xx}^2, \tageq\label{eqn:mlp_problem} 
\end{align*}
where $\text{CrossEntropy}(\cdot, \cdot)$ is the cross-entropy loss between the predictions of the MLP and the true labels and $\lambda$ is a $\ell_2$ regularization (or weight decay) parameter. In our experiments the regularization parameter was set to $\lambda=10^{-3}$ and the parameters were initialised with the PyTorch default, that is, the weights for each layer are drawn from $U(-\sqrt{k}, \sqrt{k})$ where $k = 1/(\#\text{num inputs to the layer})$. We run all methods until a maximum of $10^5$ oracle calls or until $\vnorm{\bgg_k} \leq 10^{-4}$.

\paragraph{Parameter Settings}

For the scaled methods we set $\sigma=10^{-6}$ and utilize fixed scalings for the $\LPC$ and $\NC$ case given by $\sLPC = \sNC = 1$. For the scaled and vanilla GD line search methods we utilize the standard values $\theta=0.5$ and $\rho=10^{-4}$. For vanilla gradient descent with line search, limited reset initialization achieved the best performance. 

Since \cref{eqn:mlp_problem} is nonconvex and there is no closed expression for the gradient Lipschitz constant, we manually tuned the step size parameters for fixed step, momentum methods and Adam. The tuning grid and resulting learning rate parameters are summarized in \cref{tab:mlp_fashionMNIST_learning_rates}.

\begin{table}[!ht]
    \centering
    \begin{tabular}{|c|c|c|}
        \hline
        Method & Grid & Learning Rate \\ \hline
        Fixed & $\{ 10^{0}, 10^{-1}, 10^{-2}, 10^{-3} \}$ & $10^{-2}$  \\
        Adam &  $\{10^{-2}, 10^{-3}, 10^{-4}, 10^{-5} \}$ & $10^{-5}$ \\
        HB & $\{10^{-1}, 10^{-2}, 10^{-3} \}$ & $10^{-3}$ \\
        NES & $\{10^{-1}, 10^{-2}, 10^{-3} \}$ & $10^{-3}$ \\
        \hline
    \end{tabular}
    \caption{Tuned learning rates for MLP model on FashionMNIST}
    \label{tab:mlp_fashionMNIST_learning_rates}
\end{table}

\paragraph{Additional Results} We report some additional results for the MLP model on FashionMNIST dataset. In \cref{fig:mlp_fashionMNIST_pono} we plot the main body results (\cref{fig:mlp_fashionMNIST}) with PoNo line search included. Again we see that our method outperforms the best iterates of the non-monotone PoNo method. Meanwhile, the best iterates of the PoNo method significantly outperform vanilla line search.
\begin{figure}[!ht]
    \centering
    \includegraphics[width=\linewidth]{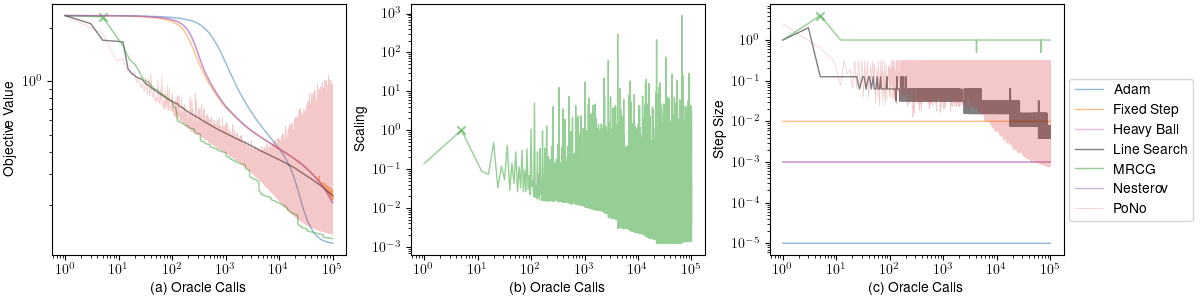}
    \caption{MLP on the FashionMNIST (\cref{fig:mlp_fashionMNIST} with PoNo line search included). (a) Objective value (b) Scaling utilized by the MRCG method (c) Step size. Crosses indicate iterations where negative curvature is detected.}
    \label{fig:mlp_fashionMNIST_pono}
\end{figure}

In \cref{fig:mlp_fashionMNIST_scalings} we compare the performance of each of the scalings on the problem. Again, we see that the alternating scalings ``CGMR'' and ``MRCG'' perform the best.

\begin{figure}[!ht]
    \centering
    \includegraphics[width=0.4\linewidth]{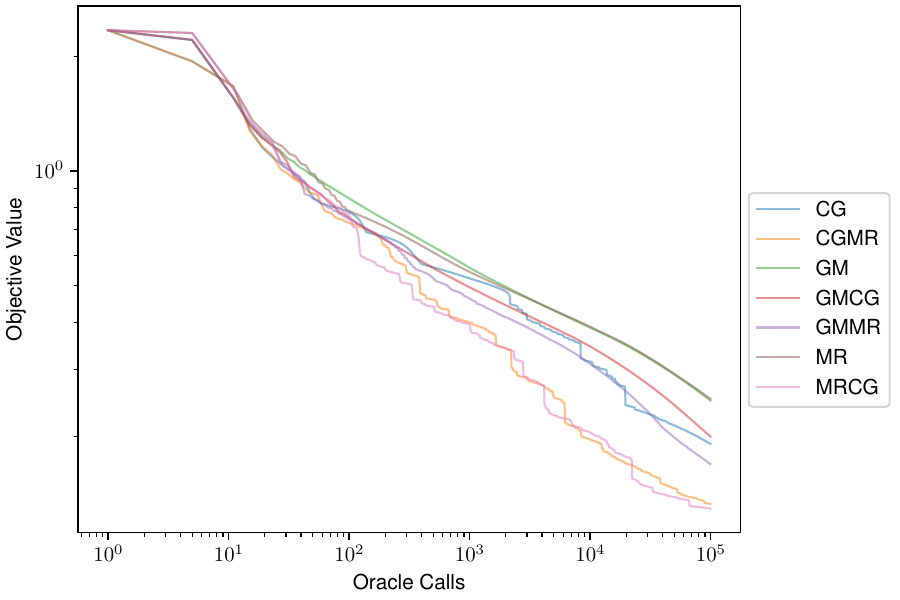}
    \caption{Comparison between scaling methods for MLP on FashionMNIST dataset.}
    \label{fig:mlp_fashionMNIST_scalings}
\end{figure}

In \cref{fig:mlp_fashionMNIST_cg} and \cref{fig:mlp_fashionMNIST_mr} we consider the performance of the CG and MR methods, respectively. For the CG and MR scaling we see that the unit step length is accepted at each iteration except for the iteration where $\NC$ is detected, consistent with \cref{thm:unit step size acceptance}. When $\NC$ is detected forward tracking line search kicks in and a larger step size is selected. Similarly, to the logistic regression case, in panel (a) of \cref{fig:mlp_fashionMNIST_mr} we see that the gradient norm is monotonic, except for iteration where $\NC$ is detected, reinforcing the results in \cref{thm:local convergence MR second-order sufficient}. No $\LPC$ directions are detected for either the MR or CG scaling.

\begin{figure}[!ht]
    \centering
    \includegraphics[width=0.8\linewidth]{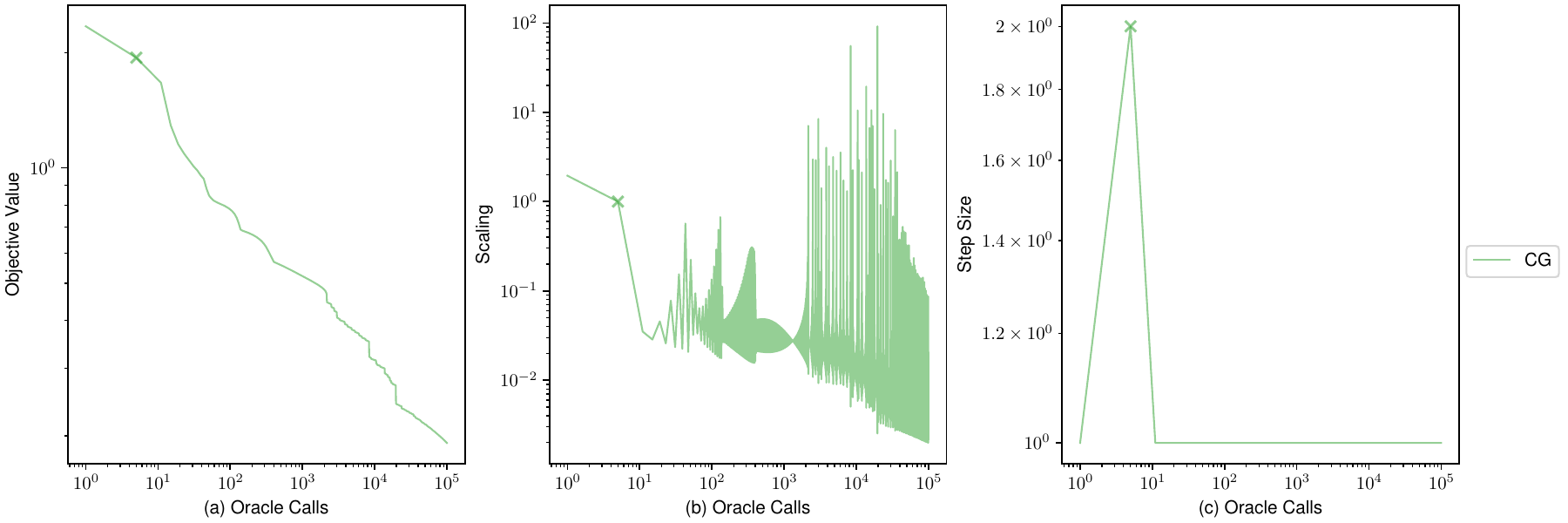}
    \caption{Performance of the CG scaling for the MLP on the FashionMNIST dataset. (a) The objective function. (b) The scaling selected by the CG method. (c) The step size selected by line search. Crosses indicate iterations where negative curvature is detected.}
    \label{fig:mlp_fashionMNIST_cg}
\end{figure}

\begin{figure}[!ht]
    \centering
    \includegraphics[width=0.8\linewidth]{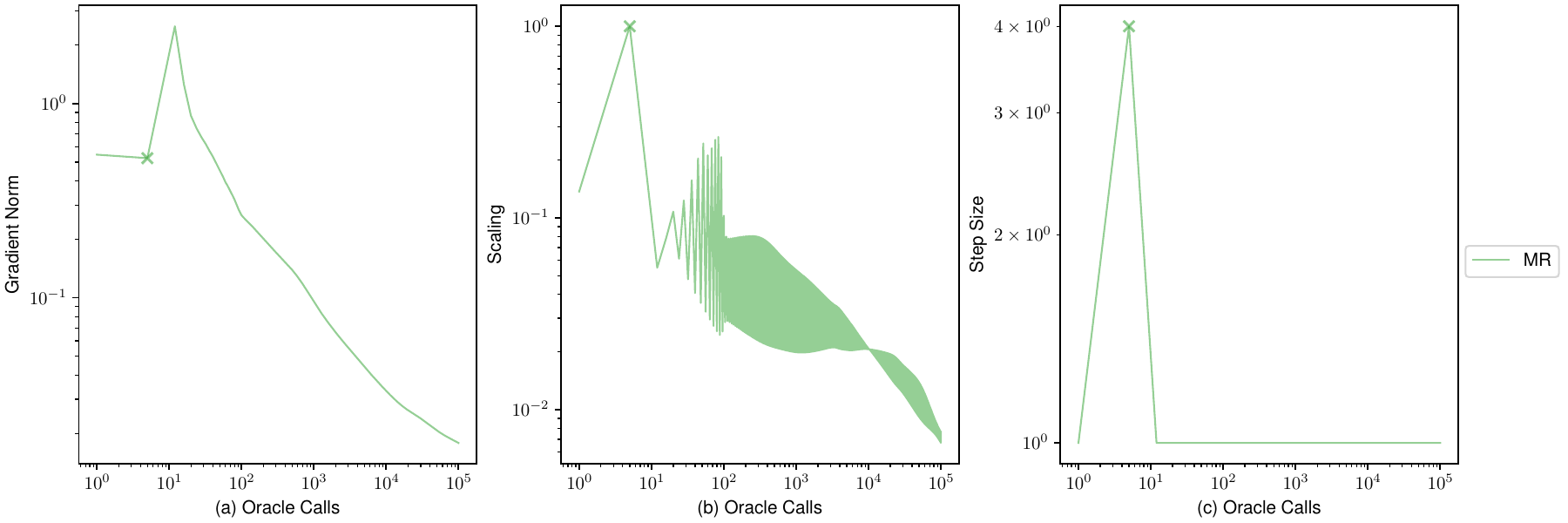}
    \caption{Performance of the MR scaling for the MLP on the FashionMNIST dataset. (a) The gradient norm. (b) The scaling selected by the MR method. (c) The step size selected by line search. Crosses indicate iterations where negative curvature is detected.}
    \label{fig:mlp_fashionMNIST_mr}
\end{figure}

In \cref{fig:mlp_fashionMNIST_time} we report our results from \cref{fig:mlp_fashionMNIST} in terms of wall-clock time. We see that the results are largely unchanged.

\begin{figure} [!ht]
    \centering
    \includegraphics[width=0.4\linewidth]{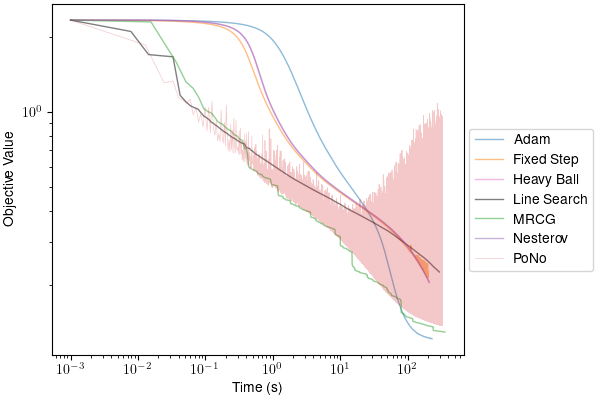}
    \caption{Wall-clock time results for MLP on the FashionMNIST dataset \citep{xiao2017fashionmnist}. $10^{-3}$ has been added to all times for plotting purposes.}
    \label{fig:mlp_fashionMNIST_time}
\end{figure}

\subsection{Resnet} \label{apx:numerical-results-resnet}

Let $\hh(\xx, \cdot)$ denote a depth 18 ResNet \citep{Kaiming2016ResNet} with parameters $\xx$. The objective for our experiment is the function
\begin{align*}
    f(\xx) = \frac{1}{n} \sum_{i=1}\text{CrossEntropy}(\hh(\xx; \aa_i), b_i) + \frac{\lambda}{2} \vnorm{\xx}^2, \tageq\label{eqn:resnet_problem} 
\end{align*}
where $\text{CrossEntropy}(\cdot, \cdot)$ is the cross-entropy loss between the predictions of the network and the true labels and $\lambda$ is an $\ell_2$ regularization parameter. We specifically consider off-the-shelf ResNet18 implementation from PyTorch\footnote{Available \href{https://pytorch.org/vision/main/models/resnet.html}{here}.} with two modifications. Firstly, we replace all $\text{ReLU}$ activations with $\text{GeLU}$ \citep{hendrycksGaussianErrorLinear2023} to ensure twice differentiability. The second modification is replacing the $\text{BatchNorm}$ \citep{Ioffe2015BatchNorm} layers with $\text{LayerNorm}$ \citep{ba2016layernormalization}. This modification has previously been considered in \citep{wu2018groupnormalization}. We utilize the ``160px'' version of the Imagenette dataset \citep{Howard2019Imagenette}, which is available from PyTorch \citep{pytorch}. The Imagenette is a 10 class, subset of the full Imagenet dataset \citep{deng2009imagenet}. We process the data by normalizing per channel followed by resizing the images to $32 \times 32$.

In our experiments we set $\lambda = 10^{-5}$ and use the default initialization for the PyTorch ResNet model. All experiments are run until a maximum of $10^4$ oracle calls or $\vnorm{\bgg_k} \leq 10^{-4}$.

\paragraph{Parameter Settings}

For our scaled gradient method we set $\sigma=10^{-6}$ and $\sNC = \sLPC = 1$. Our scaled and vanilla GD line searches utilized $\theta=0.5$, and $\rho=0$. This choice of $\rho$ was made for numerical stability purposes, as, in contrast to the previous two experiments, this experiment utilized single precision due to memory constraints. Limited reset initialization yielded the best results for initializing the vanilla GD line search. We set the learning rates for all other methods by tuning over a grid. We give the search grid as well as the selected learning rate in \cref{tab:resnet_Imagenette_learning_rates}.

\begin{table}[!ht]
    \centering
    \begin{tabular}{|c|c|c|}
        \hline
        Method & Grid & Learning Rate \\ \hline
        Fixed & $\{10^0, 10^{-1}, 10^{-2}, 10^{-3} \}$ & $10^{-1}$ \\
        Adam & $\{ 10^{-2}, 10^{-3}, 10^{-4}, 10^{-5} \}$ & $10^{-3}$ \\
        HB &  $\{ 10^{-1}, 10^{-2}, 10^{-3}\}$ & $10^{-2}$ \\
        NES &  $\{ 10^{-1}, 10^{-2}, 10^{-3}\}$ &  $10^{-2}$ \\
        \hline
    \end{tabular}
    \caption{Learning rate tuning for ResNet18 model on Imagenette.}
    \label{tab:resnet_Imagenette_learning_rates}
\end{table}

\paragraph{Additional Results}

We now collect the additional results from our ResNet18 experiments. In \cref{fig:resnet_Imagenette_ponos} we give the results from \cref{fig:resnet_Imagenette} with PoNo included. Again, our method outperforms the best iterates of the PoNo line search method.
\begin{figure}[!ht]
    \centering
    \includegraphics[width=1\linewidth]{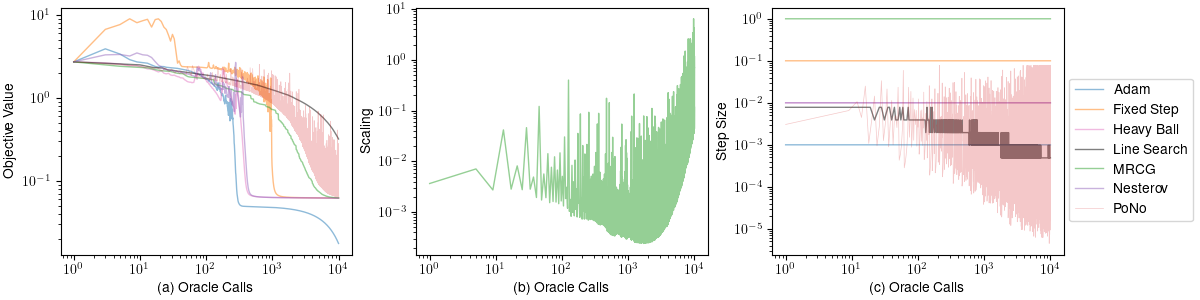}
    \caption{ResNet18 on Imagenette (\cref{fig:resnet_Imagenette} with PoNo line search included.) (a) Objective value. (b) Scaling utilized by the MRCG method. (c) Step size. Crosses indicate iterations where negative curvature is detected.}
    \label{fig:resnet_Imagenette_ponos}
\end{figure}
In \cref{fig:resnet_Imagenette_scaled} we compare between scalings and see that, similar to the previous two experiments, alternating MRCG scaling performs significantly better than the others.

\begin{figure}[!ht]
    \centering
    \includegraphics[width=0.4\linewidth]{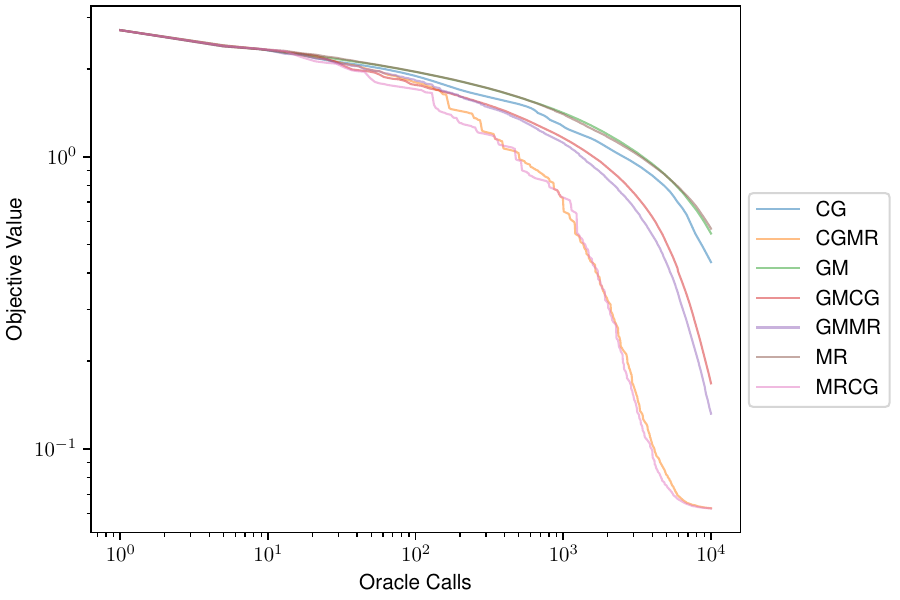}
    \caption{Comparison of scaling method performance on for the ResNet18 model on the Imagenette dataset}
    \label{fig:resnet_Imagenette_scaled}
\end{figure}

In \cref{fig:resnet_Imagenette_cg} and \cref{fig:resnet_Imagenette_mr_gnorm} we consider the CG and MR scalings in particular. Inspecting panel (b) of both figures and comparing with \cref{fig:resnet_Imagenette}, we see that the alternating MRCG scaling is capable of producing significantly larger scaling values, which could explain the faster convergence of the alternating scaling. In panel (a) of \cref{fig:resnet_Imagenette_mr_gnorm} we plot the gradient norm of the objective, we see that, similar to the MLP and logistic regression example, the MR scaling produces monotonic decrease in the gradient norm with the unit step size. This result is consistent with \cref{thm:local convergence MR second-order sufficient}. 

\begin{figure}[!ht]
    \centering
    \includegraphics[width=0.8\linewidth]{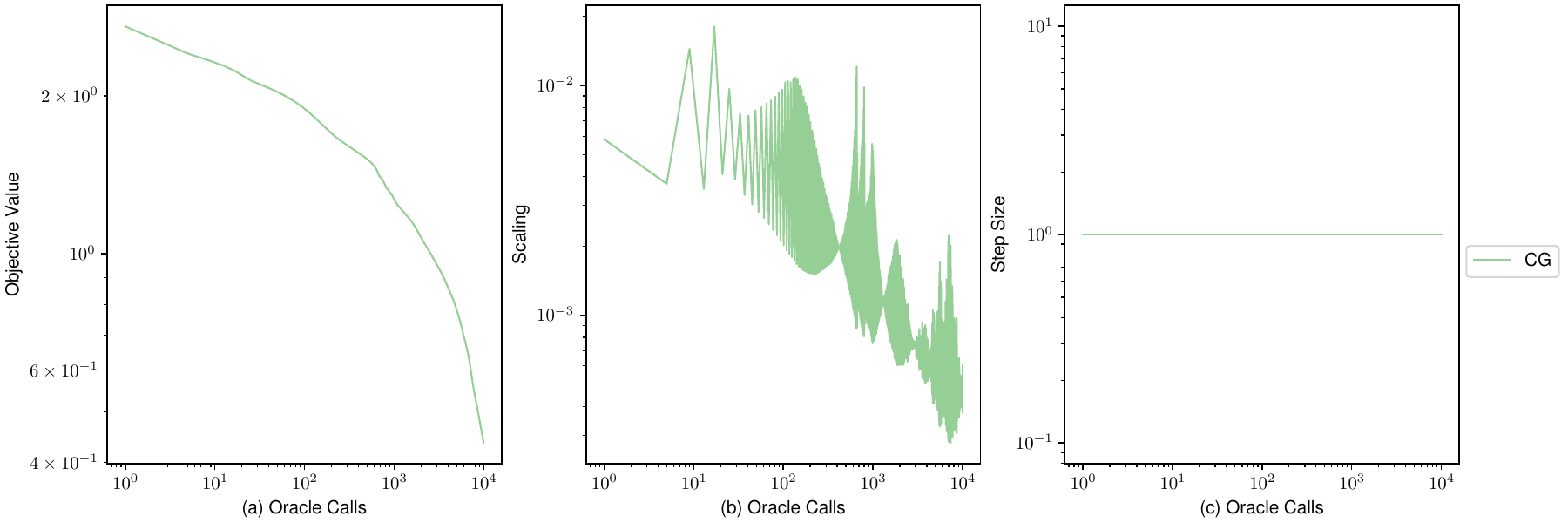}
        \caption{Performance of the CG scaling for the ResNet18 model on Imagenette dataset. (a) The objective function. (b) The scaling selected by the CG method.}
    \label{fig:resnet_Imagenette_cg}
\end{figure}

\begin{figure}[!ht]
    \centering
    \includegraphics[width=0.8\linewidth]{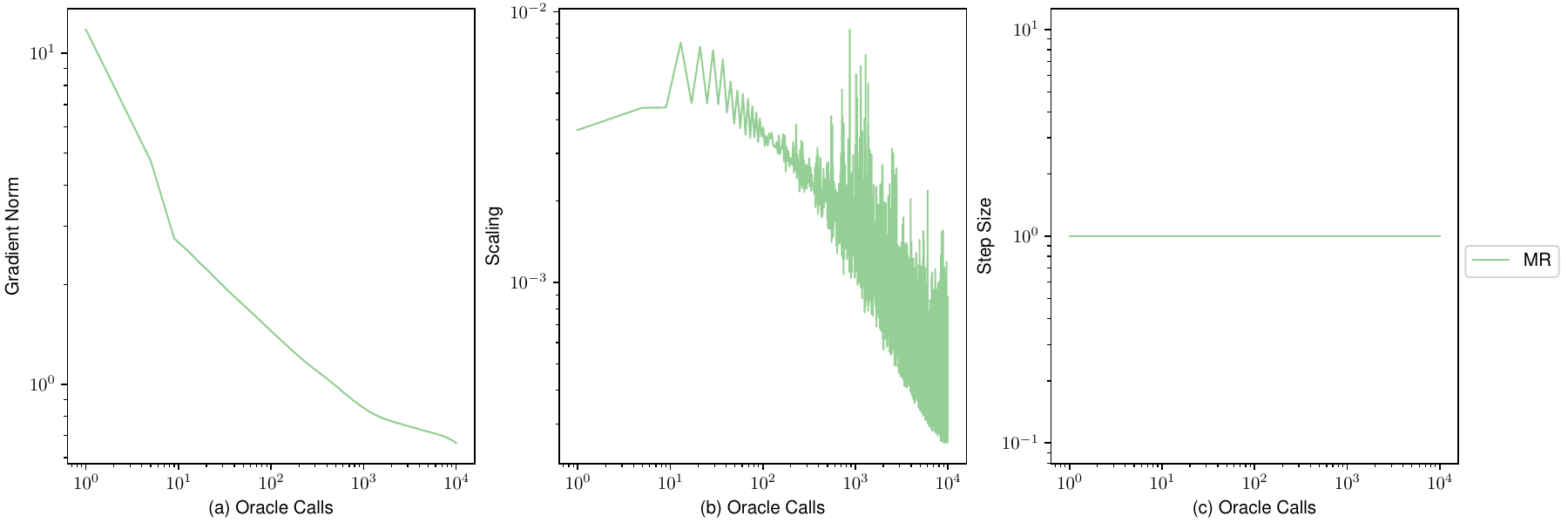}
    \caption{Performance of the MR scaling for the ResNet18 model on Imagenette dataset. (a) The objective function. (b) The scaling selected by the MR method.}
    \label{fig:resnet_Imagenette_mr_gnorm}
\end{figure}

In \cref{fig:resnet_Imagenette_time} we plot the results from \cref{fig:resnet_Imagenette} against wall-clock time. We see that the performance gap between wall clock and oracle calls is slightly worse relative to what was observed \cref{fig:logistic_regression_cifar10_time,fig:mlp_fashionMNIST_time}. This could be due to optimizations which are unavailable for second derivative information associated with more sophisticated convolutional architectures like ResNet.

\begin{figure}[!ht]
    \centering
    \includegraphics[width=0.4\linewidth]{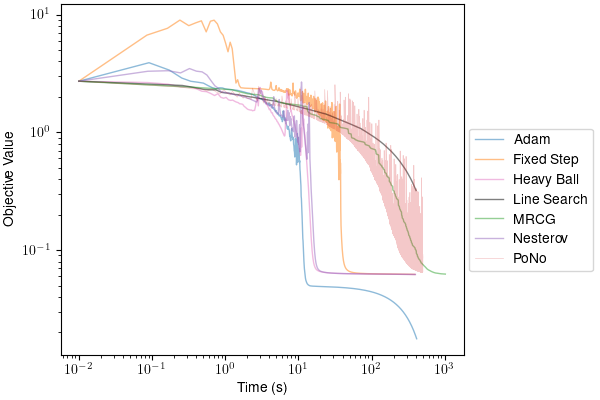}
    \caption{Wall-clock time results for the ResNet18 model on Imagenette dataset. $10^{-2}$ has been added to all times for plotting purposes.}
    \label{fig:resnet_Imagenette_time}
\end{figure}

\end{document}